\newtheorem{theorem}{Theorem}[section]
\newtheorem{lemma}[theorem]{Lemma}
\newtheorem{proposition}[theorem]{Proposition}
\newtheorem{corollary}[theorem]{Corollary}
\theoremstyle{definition}
\newtheorem{definition}[theorem]{Definition}
\newtheorem{remark}[theorem]{Remark}
\numberwithin{equation}{section}
\newcommand\dH{{\rm dist}_{\mathscr{H}}}
\newcommand\E{{\rm E}}
\newcommand\R{\mathbb{R}}
\newcommand\HH{\mathcal{H}}
\newcommand\N{{\mathbb N}}
\newcommand\e{\varepsilon}
\newcommand\Rad{\mathrm{Rsq}}
\newcommand\rad{\mathrm{rsq}}
\newcommand\varthetad{\dot{\vartheta}}
\newcommand\varthetadd{\ddot{\vartheta}}
\newcommand\etad{\dot{\varrho}}
\newcommand\etadd{\ddot{\varrho}}
\newcommand\Isq{\mathrm{Isq}}
\newcommand\isq{\mathrm{isq}}
\newcommand{\restr}{\mathbin{\vrule height 1.6ex depth 0pt width
0.13ex \vrule height 0.13ex depth 0pt width 1.3ex}}
\title[Endpoint regularity]{Endpoint regularity for $2d$ Mumford-Shah minimizers: On a theorem of Andersson and Mikayelyan}
\author[De Lellis]{Camillo De Lellis}
\address{School of Mathematics, Institute for Advanced Study, 1 Einstein Dr., Princeton NJ 05840, USA}
\email{camillo.delellis@math.ias.edu}
\author[Focardi]{Matteo Focardi}
\address{Dipartimento di Matematica e Informatica ``Ulisse Dini'', Universit\`a degli Studi di Firenze, Viale Morgagni 67/A, 50134 Firenze, Italy}
\email{matteo.focardi@unifi.it}
\author[Ghinassi]{Silvia Ghinassi}
\address{University of Washington, Department of Mathematics, Box 354350 Seattle, WA 98195-4350, USA}
\email{ghinassi@uw.edu}
\begin{document}

\maketitle

\begin{abstract}
We give an alternative proof of the regularity, up to the loose end, of minimizers, resp. critical points of the Mumford-Shah functional when they are
sufficiently close to the cracktip, resp. they consist of a single arc terminating at an interior point.
\end{abstract}

\section{Introduction}

In this paper we study the regularity properties of the jump set of 
local minimizers of the Mumford-Shah energy on an open set $\Omega\subset\R^2$, 
which for $v\in SBV (\Omega)$ is given by
\begin{equation}\label{e:ms}
\E(v):=\int_\Omega|\nabla v|^2dx+\HH^1(S_v)\, .
\end{equation}
We say that $u\colon \Omega\to\R$ is a \emph{minimizer}
if $u\in SBV(\Omega)$, $\E(u)<+\infty$ and 
\[
\E(u)\leq \E(w)\quad \text{ whenever } \{w\neq u\}\subset\subset\Omega.
\]
For the notation and all the results concerning $SBV$ functions we refer to the book \cite{AFP00}.

The Mumford-Shah functional has been proposed by Mumford and Shah in their seminal paper \cite{MS} as a variational
model for image reconstruction. Since then, it has been widely studied in the literature, from the theoretical
side but also from the numerical and applied ones (see \cite{DG,DGCL89,Dav96,BouFraMar,DMFT05} and also the many
references in \cite[Section~4.6]{AFP00}). Starting with the pioneering work \cite{DGCL89}, the existence of minimizers has been 
proved in several frameworks and with different methods, see for instance \cite{DME92,Dav96,MadSol01}. The most 
general and successful approach is that of De Giorgi and Ambrosio via the space of \emph{special functions 
of bounded variation} ($SBV$) that works in any dimension (see \cite{DGA88,Amb90,AFP00}). 

The regularity theory has seen several contributions, both in two and several space dimensions, see 
\cite{DGCL89,Dav96,B96,AP97,AFP99,AFH,lemenant,DFR14,AFP00}.
The most important regularity problem is the famous Mumford-Shah conjecture, which states that (in $2$ dimensions) 
the closure of the jump set $\overline{S}_u$ can be described as the union of a locally finite collection of injective 
$C^1$ arcs $\{\gamma_i\}$ that can meet only at the endpoints, in which case they have to form 
triple junctions. More precisely, given any point $y\in \overline{S}_u\setminus \partial \Omega$ we only have one 
of the following three possibilities:
\begin{itemize}

\item[(a)] $y$ belongs to the interior of some $\gamma_i$ and thus $S_u$, in a neighborhood of $y$, is a single 
smooth arc; in this case $y$ is called a {\em regular point}.

\item[(b)] $y$ is a common endpoint of three (and only three) distinct arcs which form (at $y$) three equal
 angles of $120$ degrees; in this case $y$ is called a {\em triple junction}.

\item[(c)] $y$ is the endpoint of one (and only one) arc $\gamma_j$, i.e. it is a ``loose end''; in this case 
$y$ is called a {\em cracktip}.
\end{itemize}
Correspondingly, for any minimizer $u$ it is known since the pioneering work of David \cite{Dav96} that: 
\begin{itemize}
 \item[(A)] If $S_u$ is sufficiently close, in a ball $B_r (x_0)$ and in the Hausdorff distance, to a diameter 
 of $B_r (x_0)$, then in the ball $B_{\sfrac r2} (x_0)$ it is a $C^{1,\kappa}$ arc.

 \item[(B)] If $S_u$ is close to a ``spider'' centered at $x_0$, i.e. three radii of $B_r (x_0)$ meeting at $x_0$ 
 at equal angles, then in the ball $B_{\sfrac r2} (x_0)$ it consists of three $C^{1,\kappa}$ closed arcs meeting 
 at equal angles at some point $y_0 \in B_{\sfrac r4} (x_0)$.
\end{itemize}
Until recently no similar result was known in the case where $S_u$ is close to a single radius of $B_r (x_0)$, namely 
the model case of (c) above. The best result available was still due to David 
(see \cite[Theorem~69.29]{DavidBook}):
\begin{itemize}
\item[(C)] if $S_u\cap B_r (x_0)$ is sufficiently close to a single radius in the 
Hausdorff distance, then $S_u \cap B_{\sfrac r2} (x_0)$ consists of a single connected arc which joins some point
$y_0\in B_{\sfrac r4} (x_0)$ with $\partial B_{\sfrac r2} (x_0)$ and which is smooth in $B_{\sfrac r2} (x_0)
\setminus \{y_0\}$. 
\end{itemize}
However, David's result does not guarantee that such arc is $C^1$ {\em up to the loose end} 
$y_0$: in particular it leaves the possibility that the arc spirals infinitely many times around it (cf. for instance 
\cite[Section 80 pg.~571]{DavidBook}).
In this note we exclude the latter possibility and we prove an $\e$-regularity result analogous to (A) and (B)
in the remaining case of cracktips. In an unpublished manuscript , cf. \cite{DF-errato}, the first and second author claimed the following theorem.

\begin{theorem}\label{t:main}
There exist universal constants $\e, \kappa >0$ with the following property. Assume that $u$ is a local
minimizer of the Mumford-Shah functional in $B_r (x_0)$ and that $\dH (S_u, \sigma)\leq \e r$ where 
$\sigma$ is the horizontal radius $[x_0, x_0 + (r, 0)]$.
Then there is a point $y_0\in B_{\sfrac r{16}} (x_0)$ and a $C^{1,\kappa}$ function 
$\psi:[0,\sfrac r4]\to [0,\sfrac r8]$ such that
\begin{equation}\label{e:main}
S_u \cap B_{\sfrac r4} (y_0) = \big\{ y_0+ (t, \psi (t)) : t\in ]0, \sfrac r4]\big\} \cap B_{\sfrac r4} (y_0)\, .
\end{equation}
\end{theorem}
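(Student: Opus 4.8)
The plan is to upgrade David's structure theorem (C) to $C^{1,\kappa}$ regularity at the loose end by a quantitative excess--decay scheme performed in conformally unfolded coordinates. After rescaling we take $r=1$ and $x_0=0$; by (C), $S_u\cap B_{\sfrac{1}{2}}$ is an embedded arc $\Gamma$ joining a loose end $y_0\in B_{\sfrac{1}{4}}$ to $\partial B_{\sfrac{1}{2}}$ and is smooth away from $y_0$, and, decreasing $\e$ and invoking David's classification of the blow-ups of $S_u$ at a loose end, the rescalings of $u$ at $y_0$ are as close as we wish to a cracktip. Thus the genuine content of the theorem is that $\Gamma$ admits a tangent line at $y_0$ which is attained with a Hölder rate --- equivalently, that $\Gamma$ does not spiral. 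Translating $y_0$ to the origin and rotating so that this putative tangent is horizontal, recall the model profile, the cracktip $v_0(z)=c_0\,\Re\sqrt z$ with branch cut along the positive real axis (the slit), together with the Euler--Lagrange system satisfied by any minimizer ($u$ harmonic off $S_u$, $\partial_\nu u=0$ on both sides of $S_u$, and the curvature of $S_u$ equal to the jump of $|\nabla u|^2$ across it). The device is the conformal map $w=\sqrt z$ on the simply connected set $B_{\sfrac{1}{2}}\setminus\Gamma$: it carries $\Gamma$ (both sides) and the tip onto a Lipschitz perturbation of the diameter and the centre of the upper half-disc, and turns $v_0$ into the affine function $w\mapsto c_0\,\Re w$. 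Since the Dirichlet integral is conformally invariant in the plane while $\HH^1(S_u)$ becomes the weighted length $\int 2|w|\,d\HH^1$, the Mumford--Shah problem around the cracktip becomes, in the $w$-variable, a weighted Bernoulli-type free boundary problem \emph{whose reference configuration is flat}; the mildly degenerate weight $2|w|$ and the $C^{0,1/2}$-type profile are what make it non-standard.

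The heart of the matter is an excess--decay lemma: there exist absolute $\tau\in(0,1)$, $\alpha\in(0,1)$ and $\e_1>0$ such that, writing
\begin{equation*}
\mathbf E(\varrho):=\inf_{(a,\omega,\lambda)}\Big(\varrho^{-1}\!\int_{B_\varrho}\!\big|\nabla u-\nabla v_{a,\omega,\lambda}\big|^2\,dx+\varrho^{-2}\,\dH\big(S_u\cap B_\varrho,\,\Sigma_{a,\omega}\cap B_\varrho\big)^2\Big),
\end{equation*}
where $(v_{a,\omega,\lambda},\Sigma_{a,\omega})$ ranges over the $4$-parameter family of cracktips (tip $a$, rotation $\omega$, amplitude $\lambda$) and their slits, one has $\mathbf E(\varrho)\le\e_1\Rightarrow\mathbf E(\tau\varrho)\le\tau^{2\alpha}\,\mathbf E(\varrho)$. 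Granting this --- and observing that $\mathbf E(\varrho_0)\le\e_1$ at a definite initial scale $\varrho_0$ as soon as $\e$ is small --- iteration along $\varrho_j=\tau^{\,j}\varrho_0$ gives $\mathbf E(\varrho)\le C\varrho^{2\alpha}$ for $\varrho$ small, and a telescoping argument then shows that the optimal tip $a(\varrho)$ is Cauchy with $|a(\varrho)-y_0|\le C\varrho^{1+\alpha}$, the optimal rotation converges with $|\omega(\varrho)-\omega_0|\le C\varrho^{\alpha}$, and the optimal amplitude converges. Rotating so that $\omega_0$ is horizontal, the rotation bound traps $\Gamma$, near $y_0$ and at every scale $\varrho$, inside a cone about the horizontal through $y_0$ of opening $\le C\varrho^{\alpha}$; hence $\Gamma\cap B_{\sfrac{1}{4}}(y_0)$ is a graph $t\mapsto(t,\psi(t))$ with $|\psi'(t)|\le C t^{\alpha}$, and since $\psi$ is smooth for $t>0$ while $\psi'$ extends $\alpha$-Hölder continuously up to $t=0$ with $\psi'(0)=0$, we get $\psi\in C^{1,\alpha}$ and $|\psi(t)|\le C t^{1+\alpha}$. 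Decreasing $\e$, these estimates fit the stated inclusions $y_0\in B_{\sfrac{r}{16}}(x_0)$ and $\psi\colon[0,\sfrac{r}{4}]\to[0,\sfrac{r}{8}]$, and \eqref{e:main} follows with $\kappa=\alpha$.

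The lemma is proved by contradiction and compactness. Given minimizers $u_k$ with $\mathbf E(\varrho_k)=:\e_k^2\to0$ but $\mathbf E(\tau\varrho_k)>\tau^{2\alpha}\mathbf E(\varrho_k)$, one unfolds, rescales to unit scale, and divides by $\e_k$ the relevant deviations --- of the unfolded solution from its optimal cracktip, and of the unfolded free boundary from the flat reference --- and, using interior elliptic estimates away from the tip together with the uniform energy bounds supplied by (C) and minimality, passes to a limit $(V,\varphi)$. One checks that $V$ is harmonic in the upper half-disc with the linearised Neumann condition on the diameter away from the origin, that $\varphi$ solves a (degenerate) second-order linear equation forced by the appropriate trace of $\nabla V$ --- the linearised curvature/transmission condition --- and that $(V,\varphi)$ is, by the minimality of the chosen parameters $(a,\omega,\lambda)$, orthogonal to the finite-dimensional space of admissible linearised perturbations generated by tip-translations, rotations and amplitude changes. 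A separation-of-variables analysis in the half-disc then exhibits a spectral gap on the orthogonal complement: $(V,\varphi)$ decays on $B_\varrho$ like $\varrho^{\alpha}$, contradicting the assumed lower bound for $k$ large. (Alternatively, the same information can be organised via a Weiss-type monotonicity formula at the tip together with an epiperimetric inequality at the cracktip, the latter again reducing, after the unfolding, to a Fourier analysis of competitors on $\partial B_1$.)

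The main obstacle is precisely the excess--decay lemma and, within it, the linearised weighted free-boundary analysis in the unfolded half-disc. One must choose function spaces adapted to the degenerate weight $2|w|$ and to the $C^{0,1/2}$-type profile --- and here the tip-translation modes are genuinely delicate, since in the unfolded variable they barely fail to lie in the natural energy space near $w=0$ --- one must establish compactness of the blow-up sequence \emph{up to the tip} and not merely away from it, and, most importantly, one must prove the spectral gap by excluding resonant, logarithmic homogeneous modes of integer degree beyond the trivial ones. These resonant modes are exactly the mechanism that would produce David's spiralling scenario, and ruling them out appears to require exploiting the \emph{minimality} of $u$ (a second-variation or quantitative stability inequality), not merely the Euler--Lagrange equations.
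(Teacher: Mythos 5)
Your outline correctly identifies the architecture of the argument --- reduce to David's structure theorem (C), flatten the geometry near the tip, linearize, do a spectral/separation-of-variables analysis, and implement an excess-decay scheme to produce a Hölder rate for the tangent direction --- and it is close in spirit to the paper's approach (which, after passing to the harmonic conjugate to convert Neumann into Dirichlet conditions, uses cylindrical exponential coordinates $t=-\ln r$ with a Simon-type phase shift by $\alpha(r)$ and a three-annuli property in place of your excess-decay lemma; your $w=\sqrt z$ unfolding to a half-disc is a closely related but distinct normalization).

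However, the proposal has a genuine gap precisely at the point that makes the theorem hard, and you flagged it yourself: it does not rule out the slowly-varying (resonant) linearized mode that would correspond to David's infinite-spiralling scenario. In the paper's spectral decomposition this is the non-diagonalizable Jordan block generated by $\zeta_1(\phi)=\cos\frac\phi2$ and $\zeta_0(\phi)=(\phi-\pi)\sin\frac\phi2$, whose time behaviour produces a constant $a_0$ and a linearly growing $\lambda$ --- a mode that is orthogonal to nothing useful and is not generated by any symmetry (tip-translation, rotation, or amplitude scaling), so the ``mod out by the kernel of the linearization'' step in your compactness argument does not dispose of it. Moreover, the mechanism you suggest for excluding it --- a second-variation/stability inequality using minimality --- points in the wrong direction: the paper excludes this mode using only the first-variation (inner Euler--Lagrange) identity, extended to non-compactly-supported conformal vector fields by a cut-off/truncation argument (Theorem~\ref{t:boundary-variations} and Corollary~\ref{c:AM-variations}). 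The resulting boundary-integral identity \eqref{e:AM-identity} linearizes to the extra scalar constraint \eqref{e:condizione_extra}, which, fed into the three-annuli argument, kills the residual mode. This is what lets the paper state Theorem~\ref{t:main2} for critical points, not just minimizers. Until your compactness/spectral-gap step is supplemented by an analogue of that constraint (or some other concrete mechanism to eliminate the resonant Jordan block), the excess-decay lemma you assert is not proved, and the proposal does not constitute a proof of the theorem.
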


The strategy was based on a suitable linearization but, due to a sign mistake, the linear equations considered in \cite{DF-errato} were actually not the correct ones. After correcting the mistake, we found that for the new linear equations not all solutions had the appropriate decay properties and in particular that there is a slowly varying solution which is not generated by any symmetry of the original nonlinear equations (cf. the discussion below). In particular the first and second author retracted the manuscript \cite{DF-errato} as the proof was not complete. Andersson and Mikayelyan, in an independent work published around a year later attempted a somewhat analogous linearization approach, based on much earlier computations from their paper \cite{AndMyk1}. The latter reference contained as well an error and got them to consider a wrong linearized problem (see the comments to \cite{AndMyk1}, which was retracted by the authors, and to version 1 of \cite{AndMyk2}). The works \cite{DF-errato} and \cite{AndMyk1}-\cite{AndMyk2} linearize the problem in a different system of coordinates, however both errors lead to a wrong sign in a corresponding term of the equations, and in particular the slowly varying solution causing troubles is the same one. After realizing their mistake, Andersson and Mikayelyan came up with a clever competitor argument which excludes that the linearization of minimizers might be (a multiple of) the problematic slowly varying solution. In particular, in a revised version of their paper, they claimed an adjusted proof of Theorem \ref{t:main}. Furthermore, since the remaining linear modes have a faster decay, they indeed claimed that the arc is $C^{2,\kappa}$ and that the curvature vanishes at the tip.

\begin{theorem}\label{t:main-C2}
Let $u$ be as in Theorem \ref{t:main}. Then the function $\psi$ is $C^{2,\kappa}$ and its second derivative vanishes at $0$.
\end{theorem}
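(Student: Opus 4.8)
\emph{Proof strategy.} The plan is to upgrade the expansion of $u$ at the loose end $y_0$ that lies behind Theorem~\ref{t:main}, to show that its slowly varying, degree-one, correction is absent for minimizers, and then to feed the resulting faster decay into the curvature equation for the arc. For \emph{Step 1}, translating and rotating, assume $y_0=0$ and $\psi'(0^+)=0$ (note $\psi(0^+)=0$ follows from \eqref{e:main}); the residual affine freedom $\psi\mapsto\psi+a+bt$ corresponds to translating and rotating the tip. By Theorem~\ref{t:main} one has $\psi\in C^{1,\kappa}$, hence $\psi(t)=O(t^{1+\kappa})$ and $\psi'(t)=O(t^{\kappa})$. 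Straightening $S_u$ near $0$ onto the slit $\{(t,0):t>0\}$ by a $C^{1,\kappa}$ diffeomorphism turns $u$ into $\tilde u$ solving $\mathrm{div}(A\nabla\tilde u)=0$ off the slit with homogeneous Neumann data on it, where $A=\Id+O(|\psi'|)=\Id+O(t^{\kappa})$ near $0$; and the Euler--Lagrange condition on the regular part of $S_u$ --- that its curvature equals the jump $|\nabla u^+|^2-|\nabla u^-|^2$ of the squared tangential gradient --- becomes
\begin{equation}\label{e:curv-eq}
\frac{\psi''}{(1+(\psi')^2)^{3/2}}=Q\big(\nabla\tilde u(t,0^+),\,\nabla\tilde u(t,0^-),\,\psi'(t)\big),
\end{equation}
with $Q$ smooth, quadratic in the gradients, and vanishing at the pure cracktip profile together with $\psi'=0$.

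\emph{Step 2: mode expansion and the bad mode.} The harmonic functions on the slit plane with homogeneous Neumann data are spanned by the cracktip profile $\rho^{1/2}\sin(\theta/2)$, by $\rho^{n}\cos(n\theta)$ ($n\ge1$) and by $\rho^{n+1/2}\sin((n+1/2)\theta)$ ($n\ge1$); since the perturbation $A-\Id=O(t^{\kappa})$ is integrable in $dt/t$, the asymptotics of $\tilde u$ at $0$ are governed by these homogeneities, and (the proof of) Theorem~\ref{t:main} provides $\tilde u=c\,\rho^{1/2}\sin(\theta/2)+w$ with $w=o(\rho^{1/2})$. Writing the modes as imaginary parts of powers of $z=t\,e^{i\theta}$, one checks that $|\nabla(\rho^{n+1/2}\sin((n+1/2)\theta))|^2$ is $\theta$-independent, so the half-integer modes contribute \emph{no} jump to the right-hand side of \eqref{e:curv-eq} at leading order, whereas the cross term of the cracktip with $\rho^{n}\cos(n\theta)$ contributes a jump of order $\rho^{n-3/2}$. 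Hence the only obstruction to better decay is the degree-one component $\beta\,\rho\cos\theta=\beta x$ of $w$: if $\beta\neq0$ the right-hand side of \eqref{e:curv-eq} is of order $\beta\,t^{-1/2}$, forcing $\psi\sim\beta\,t^{3/2}$ --- still $C^{1,1/2}$, but not $C^2$ --- while if $\beta=0$ the remaining modes drive the right-hand side down to order $t^{1/2}$. This $\beta x$ term is precisely the ``slowly varying solution'': in the blow-up variables $\rho^{-1/2}u(\rho\,\cdot)$ it decays only like $\rho^{1/2}$, and it is produced by no symmetry of $\E$ (adding a linear function changes the energy).

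\emph{Step 3: the competitor argument, $\beta=0$ --- the crux.} This is the one point where full minimality, not merely \eqref{e:curv-eq}, is used, and I expect it to be the main obstacle. Following Andersson and Mikayelyan, one assumes $\beta\neq0$ and constructs, for $\rho$ small, a competitor $v$ equal to $u$ outside $B_\rho(0)$ which inside replaces the $\beta$-tilted configuration by a straightened one --- essentially cancelling the shear encoded by $\beta x$ and readjusting the arc and the harmonic extension so as to match the Dirichlet datum on $\partial B_\rho(0)$ while paying no more in $\HH^1(S_v)$ than is gained in $\int|\nabla v|^2$. A careful energy estimate then gives $\E(u;B_\rho(0))-\E(v;B_\rho(0))>0$ for all small $\rho$ whenever $\beta\neq0$, contradicting minimality; hence $\beta=0$. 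One must also check that the coupling in Step 1 cannot regenerate a degree-one component along the bootstrap: a component $\psi\sim t^\gamma$ feeds into $\tilde u$, through $A-\Id=O(|\psi'|)$, only modes of homogeneity $\gamma-\tfrac12$, so degree one can reappear solely from a $t^{3/2}$ component of $\psi$, i.e.\ from $\beta$ itself, and once $\beta=0$ is known the iteration closes. (If this competitor argument is already established in the course of proving Theorem~\ref{t:main}, Step 3 merely invokes it.)

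\emph{Step 4: conclusion.} With $\beta=0$, Step 2 gives $\tilde u=c\,\rho^{1/2}\sin(\theta/2)+d\,\rho^{3/2}\sin(3\theta/2)+O(\rho^2)$ near $0$, and since the cracktip--cracktip, cracktip--$\tfrac32$ and $\tfrac32$--$\tfrac32$ contributions to the jump in \eqref{e:curv-eq} all vanish, its right-hand side is $O(t^{1/2})$ near $0$, with a Hölder modulus. As $\psi'=O(t^{\kappa})\to0$, \eqref{e:curv-eq} then yields $\psi''(t)=O(t^{1/2})$ and $\psi''(t)\to0$ as $t\to0$, with a Hölder bound near $0$. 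Away from the tip, on $[\delta,r/4]$ for any $\delta>0$, harmonicity of $u$ with Neumann data on a $C^{1,\kappa}$ arc makes $u$ smooth up to $S_u$ by Schauder bootstrap, hence $\psi$ smooth; combining the two, $\psi\in C^{2,\kappa}([0,r/4])$ with $\psi''(0)=0$. Besides Step 3, the only other delicate point is to make the interaction between $\psi$ and the perturbed equation for $\tilde u$ uniform as $t\to0$, so that the slowly varying mode is not reintroduced by the coupling.
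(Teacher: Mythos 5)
Your proposal follows a route that is closer in spirit to Andersson--Mikayelyan's original argument than to the one taken in this paper, and the two differ in several structural ways. The paper does not straighten the slit and work directly with $u$ under a perturbed Neumann problem; instead it passes to the harmonic conjugate $w$ (so that the slit carries a homogeneous Dirichlet condition, which is easier to handle), performs a Simon-type change of variables $t=-\ln r$ to a cylinder, and reduces everything to the decay of Fourier-type coefficients for a Ventsel boundary value problem via a three-annuli argument (Proposition~\ref{p:tre-anelli}) combined with a second linearization (Proposition~\ref{p:linearizzazione-2}). Most importantly, your Step~3 --- excluding the degree-one mode $\beta x$ --- is the crux, and you leave it as a black box, deferring to the Andersson--Mikayelyan competitor construction and explicitly noting you expect it to be the main obstacle. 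This is precisely the step the paper replaces: rather than building a competitor and estimating its energy (which requires genuine minimality, an explicit harmonic extension, and delicate bookkeeping of $\HH^1(S_v)$ versus the Dirichlet energy), the paper derives the boundary integral identity \eqref{e:AM-identity} of Corollary~\ref{c:AM-variations} via cut-off inner variations (Theorem~\ref{t:boundary-variations}); its linearization \eqref{e:condizione_extra} kills exactly the bad pair $(\zeta_0,\zeta_1)$ in the spectral decomposition, and this works for critical points, not only minimizers. So the proposal is not wrong in spirit, but it has a genuine gap where the paper has its main new contribution.

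Two further points deserve flagging. First, in your Step~2 the mode expansion is stated for the flat slit problem; the actual statement you need --- that the perturbation $A-\Id=O(t^\kappa)$ does not generate a spurious degree-one component, and that the error terms from the variable coefficients are uniformly subordinate across scales as $t\to0$ --- is exactly what a Simon-type analysis (or, in this paper, the three-annuli iteration of Section~\ref{s:tre-anelli}) is designed to control, and you acknowledge this as ``the only other delicate point'' without resolving it. The paper handles it by working with the full nonlinear system \eqref{e:SIS}, extracting a compactness/linearization limit uniformly over dyadic windows, and iterating the three-annuli dichotomy. Second, your Step~4 derives a pointwise bound $\psi''(t)=O(t^{1/2})$ from the expansion, whereas the paper's decay exponent $\delta_0$ comes from the spectral gap $\nu_2>\tfrac32$ (Lemma~\ref{l:autovalori}) and is not claimed to be optimal; both give $C^{2,\kappa}$ with $\psi''(0)=0$, but the arguments reaching this conclusion are different. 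In short: the overall strategy you describe is defensible and parallels Andersson--Mikayelyan, but the central exclusion step is not carried out, and the iteration scheme that makes the mode analysis rigorous through all scales is missing.
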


While we are not able to follow all the details of their arguments, we believe that this is just due to some technical problems and that their approach is overall correct. In this paper we propose some alternative methods that take care of a few technical issues and give different proofs of the Theorems \ref{t:main} and \ref{t:main-C2}. In particular, even though the fundamental reason for the regularity is still the same, we use three distinct ideas:
\begin{itemize}
\item[(i)] First of all, in order to exclude the slowly varying solution of the linearized problem we use a suitable cut-off argument on the usual Euler-Lagrange identity for inner variations. This replaces computations of Andersson and Mikayelyan (which are rather involved) with a clean simple identity between boundary integrals for critical points. In particular suitable variants of Theorem \ref{t:main} and Theorem \ref{t:main-C2} are in fact valid for critical points. Our point of view gives an interesting link to another identity discovered by David and L\'eger, and Maddalena and Solimini. The one used in this paper and the one of David-L\'eger-Maddalena-Solimini are in fact particular cases of a more general family of identities found by applying the ``truncation method'' to isometries and conformal transformations.
\item[(ii)] Rather than linearizing the equations for the Mumford-Shah minimizers, we linearize the one for its harmonic conjugate. This has the advantage that the Neumann boundary condition is replaced by the Dirichlet boundary condition, simplifying several computations.
\item[(iii)] Finally, we take the change of variables approach of \cite{DF-errato}, inspired by the pioneering work of Leon Simon \cite{Simon83}, which avoids any discussion of the behavior of the solution at the tip, prior to knowing Theorem \ref{t:main}, and the technicalities involved by comparing functions defined on different domains. 
\end{itemize}

As remarked in \cite{DF-errato}, a consequence of Theorem~\ref{t:main} is a strengthening of the conclusions in
\cite[Proposition~5]{DLF13} that yields an energetic characterization of the Mumford-Shah conjecture.
\begin{proposition}\label{p:Econj}
The Mumford-Shah conjecture holds true for a local minimizer $u$ in $\Omega$
if and only if $\nabla u\in L^{4,\infty}_{loc}(\Omega)$, i.e. if
for all $\Omega^\prime\subset\subset\Omega$ there is a constant $K=K(\Omega^\prime)>0$ such that for all $\lambda> 0$
\[
|\{x\in\Omega^\prime:\,|\nabla u(x)|>\lambda\}|\leq K\lambda^{-4}.
\]
\end{proposition}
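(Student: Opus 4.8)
The plan is to deduce both implications from the local structure of $\overline{S}_u$ together with the behaviour of $\nabla u$ near cracktips, so that the only genuinely new ingredient over \cite[Proposition~5]{DLF13} will be Theorem~\ref{t:main}. I would fix once and for all an open set $\Omega'\subset\subset\Omega$, and recall that for any local minimizer $\overline{S}_u\cap\overline{\Omega'}$ is compact and that, by the uniform density lower bound for $S_u$, every blow-up of $u$ at a point of $\overline{S}_u$ is non-trivial.

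For the implication ``Mumford-Shah conjecture $\Rightarrow$ $\nabla u\in L^{4,\infty}_{loc}$'', I would first use the conjecture to write $\overline{S}_u\cap\overline{\Omega'}$ as a finite union of $C^1$ arcs meeting only at finitely many triple junctions, plus finitely many cracktips $y_0^{(1)},\dots,y_0^{(N)}$. Away from fixed small balls $B_{r_0}(y_0^{(j)})$, the function $u$ is harmonic in the complement of a finite union of smooth arcs and triple junctions, so by David's regularity theory (A)--(B) and standard elliptic estimates $\nabla u$ is bounded there, say by $M$. Near each cracktip, the classical asymptotics of the global ``cracktip'' minimizer (see \cite{DavidBook}) --- or, more transparently, Theorems~\ref{t:main}--\ref{t:main-C2}, which supply the $C^{1,\kappa}$ arc through $y_0^{(j)}$ --- give $|\nabla u(x)|\le C\,|x-y_0^{(j)}|^{-1/2}$ on $B_{r_0}(y_0^{(j)})$. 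Then for $\lambda>M$ the set $\{|\nabla u|>\lambda\}\cap\Omega'$ is contained in $\bigcup_j\{|x-y_0^{(j)}|<(C/\lambda)^2\}$, of measure at most $N\pi C^4\lambda^{-4}$, while for $\lambda\le M$ one trivially has $|\{|\nabla u|>\lambda\}\cap\Omega'|\le|\Omega'|$; choosing $K$ large enough the weak-$L^4$ bound follows.

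For the converse, ``$\nabla u\in L^{4,\infty}_{loc}$ $\Rightarrow$ Mumford-Shah conjecture'', I would invoke \cite[Proposition~5]{DLF13}, according to which the weak-$L^4$ control forces every blow-up of $u$ at every $y\in\overline{S}_u\cap\overline{\Omega'}$ to be, up to rotation, translation and additive/multiplicative constants, one of the three canonical profiles (the pure-jump ``line'' function, the triple-junction function, or the cracktip function), and hence in a small ball around $y$ the set $S_u$ is Hausdorff-close to a diameter, to a spider, or to a radius. In the first two cases David's $\e$-regularity results (A) and (B) give a single $C^{1,\kappa}$ arc, respectively three $C^{1,\kappa}$ arcs meeting at equal angles; in the third case, where David's (C) left open that the arc might spiral infinitely around its endpoint, I would apply Theorem~\ref{t:main} (together with Theorem~\ref{t:main-C2}) to obtain a genuine $C^{1,\kappa}$ arc terminating at an interior point. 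Since each of these conclusions is stable under small perturbations of the base point and $\overline{\Omega'}$ is compact, finitely many such balls cover $\overline{S}_u\cap\overline{\Omega'}$, and patching the local descriptions (a routine verification) yields precisely the structure asserted by the conjecture in $\Omega'$; exhausting $\Omega$ by such $\Omega'$ concludes.

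The hard part is exactly the cracktip case of the converse, and this is where Theorem~\ref{t:main} does all the work: the rest of the converse and the whole direct implication are either classical or already contained in \cite[Proposition~5]{DLF13}, and the weak-$L^4$ estimate is only a matter of bookkeeping of constants. Put differently, the content of Proposition~\ref{p:Econj} beyond \cite[Proposition~5]{DLF13} is precisely the upgrade from David's (C) to the conclusion of Theorem~\ref{t:main}, namely that a minimizer whose jump set is close to a radius has a jump set which is a $C^1$ arc up to the loose end.
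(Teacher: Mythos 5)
The paper itself does not supply a proof of Proposition~\ref{p:Econj}: it is stated only as a remark, as a consequence of Theorem~\ref{t:main} combined with \cite[Proposition~5]{DLF13}. So there is no proof in the paper to compare against line-by-line, and the relevant question is whether your reconstruction is a correct filling-in of what the authors are implicitly claiming. It is, and the high-level structure (MS conjecture $\Rightarrow$ weak-$L^4$ is elementary bookkeeping using the local models; weak-$L^4$ $\Rightarrow$ MS conjecture is \cite[Proposition~5]{DLF13} plus the cracktip case supplied by Theorem~\ref{t:main}) is exactly the intended argument.

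Two small remarks on your write-up. First, for the direct implication, the estimate $|\nabla u(x)|\lesssim |x-y_0|^{-1/2}$ near a loose end does not really follow "from Theorems~\ref{t:main}--\ref{t:main-C2}, which supply the $C^{1,\kappa}$ arc"; knowing the arc is $C^{1,\kappa}$ is a statement about $S_u$, not directly about $\nabla u$. The cleanest source is Bonnet's monotonicity formula (as invoked in Section~2 of the paper), which gives $\int_{B_r(y_0)}|\nabla u|^2\le r\int_{B_1}|\nabla u|^2$, from which the pointwise bound follows by interior elliptic estimates on the annuli $B_r\setminus B_{r/2}$. In this direction Theorems~\ref{t:main}--\ref{t:main-C2} are actually superfluous, since the Mumford--Shah conjecture is assumed and already gives the arc structure at cracktips. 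Second, you paraphrase \cite[Proposition~5]{DLF13} as saying that weak-$L^4$ forces the blow-ups to be the three canonical profiles; this is close in spirit, but the relevant formulation (as the wording "a strengthening of the conclusions in \cite[Proposition~5]{DLF13}" suggests) is that the equivalence there was established \emph{modulo} the open question of whether the arc at a cracktip is $C^1$ up to the tip, which is precisely the gap Theorem~\ref{t:main} closes. Since you explicitly defer to that reference and to Theorem~\ref{t:main} for the decisive cracktip case, the logic goes through; just be careful not to attribute to \cite{DLF13} a cleaner statement than it actually proves.
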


\section{Reduction to a single connected curve}

Theorems~\ref{t:main} and \ref{t:main-C2} will be proved combining (a more precise version of) David's statement (C) with
Theorem~\ref{t:main2} below, which for simplicity we state when the domain $\Omega$ is the unit disk
(a corresponding version for $\Omega = B_r (x)$ can be then proved by a simple scaling argument). Theorem~\ref{t:main2} is not comparable to Theorem \ref{t:main} because:
\begin{itemize}
\item on the one hand it assumes the stronger property that the jump set of the critical point is a single arc with one endpoint at the boundary $\partial B_1$ and the other endpoint at the origin;
\item on the other hand it assumes that $(u, S_u)$ is a critical point, rather than a minimizer.
\end{itemize}
We recall that a critical point $(u, S_u)$ satisfies the following two identities, which we will call, respectively, outer and inner variational identities:
\begin{align}
&\int_{\Omega\setminus S_u} \nabla u \cdot \nabla \varphi = 0 \qquad \forall \varphi \in C^1_c (\Omega, \mathbb R)\label{e:outer}\\
&\int_{\Omega \setminus S_u} (|\nabla u|^2 \operatorname{div} \eta - 2 \nabla u^T\cdot \nabla \eta\cdot \nabla u) = - \int_{S_u} e^T \cdot \nabla \eta\cdot  e \, d\mathcal{H}^1
\quad\forall \eta\in C^1_c (\Omega, \mathbb R^2)\, ,\label{e:inner}
\end{align}
where $e(x)$ is a unit tangent vector field to the rectifiable set $S_u$.
The first identity corresponds to the stationarity of the energy with respect to the perturbation $u_t (x) := u(x) + t \varphi (x)$.
For the second, if we define $\Phi_t (x) := x+t \eta (x)$, then $\Phi_t$ is a diffeomorphism of $\Omega$ onto itself for all sufficiently small $t$ and \eqref{e:inner} is equivalent to the condition 
$\left. \frac{d}{dt} E (u \circ \Phi_t)\right|_{t=0} = 0$. We note in passing that arguing by density \eqref{e:outer} is in fact valid for every $\varphi \in H^1_0 (\Omega)$\footnote{{We use the standard notation $H^k$ for $W^{k,2}$, for $k\in\mathbb{N}$}}, while \eqref{e:inner} for every $\eta\in C^1_0 (\Omega, \mathbb R^2)$. In fact \eqref{e:inner} can be extended to $\eta\in W^{1,\infty}_0 (\Omega, \mathbb R^2)$, but such extension would require a discussion of how to interpret appropriately the derivative of a Lipschitz function along tangent fields to rectifiable sets, which is not needed for our purposes. 

\begin{theorem}\label{t:main2}
There are universal constants $\e_0, \delta_0, C >0$ with the following property.
Let $u$ be a critical point of the Mumford-Shah functional in $B_1$ whose singular set $S_u$ is given, in cartesian coordinates, by
\begin{equation}\label{e:crack-tip}
S_u = \big\{ r (\cos \alpha (r), \sin \alpha (r)) : r\in ]0,1[\big\}
\end{equation}
for some smooth function $\alpha: ]0,1[\to \mathbb R$ with 
\begin{equation}\label{e:piccolezza}
\sup_r (r |\alpha' (r)|+ r^2 |\alpha'' (r)|) \leq \e_0\, .
\end{equation}
Then the curvature $\kappa (r)$ of the curve $S_u$ at the point $r (\cos \alpha (r), \sin \alpha (r))$ satisfies the estimate
\begin{equation}\label{e:main_estimate}
|\kappa (r)| \leq C r^{\delta_0}\, .
\end{equation}
\end{theorem}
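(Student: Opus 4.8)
The strategy I would follow is a Simon-type blow-up/decay argument conducted through the harmonic conjugate of $u$, after pulling the singular set onto a fixed radius via the change of variables $(r,\theta)\mapsto(r,\theta-\alpha(r))$. Writing $v$ for the (locally defined) harmonic conjugate of $u$ on $B_1\setminus S_u$, the Neumann condition $\partial_\nu u=0$ across $S_u$ becomes a Dirichlet condition for $v$, which is what idea (ii) in the introduction buys us; after the change of variables the domain becomes the slit disk $B_1\setminus\sigma$ and $v$ solves an elliptic equation whose coefficients differ from the flat Laplacian by terms controlled by $r|\alpha'|+r^2|\alpha''|\le\e_0$, with the homogeneous Dirichlet condition on the slit. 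The reference profile is the function $\rho^{1/2}\sin(\varphi/2)$ (equivalently $\mathrm{Im}\,\sqrt{z}$), whose $\mathcal H^1$-density-$\sfrac12$ behaviour at the origin is exactly the cracktip. I would decompose $v$ in each dyadic annulus $A_k=B_{2^{-k}}\setminus B_{2^{-k-1}}$ into its projection onto this model mode plus an orthogonal remainder, and aim to show the remainder has a strictly better-than-$\rho^{1/2}$ decay, i.e. a gain $\rho^{1/2+\delta_0}$; by elliptic estimates this remainder is what carries the curvature $\kappa(r)$, so $|\kappa(r)|\lesssim r^{\delta_0}$ will follow.

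**The key steps, in order.** First, set up the harmonic conjugate and the change of variables carefully, recording the perturbed equation and the error terms, and recording that $\alpha$, $\alpha'$, $\alpha''$ are all small (so the perturbation is a genuinely small multiple of the identity in $C^1$ on each scale after rescaling). Second — and this is where idea (i) enters — use the inner variational identity \eqref{e:inner} tested against a radial cut-off vector field $\eta(x)=\chi(|x|)x$ (suitably truncated between two dyadic radii) to produce a clean identity between boundary integrals on circles $\partial B_\rho$; this is the monotonicity-type quantity that plays the role of Almgren's frequency and, crucially, it is what rules out the ``slowly varying'' solution of the linearized problem, i.e. the spurious mode of frequency close to but not equal to $\sfrac12$ which is not generated by a symmetry. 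Third, linearize: prove that, on a fixed dyadic scale, if $v$ is $L^2$-close to a nonzero multiple of the model mode then, modulo the controlled error terms, the rescaled remainder nearly solves the homogeneous linearized (Dirichlet, on the slit) problem, whose admissible frequencies are separated from $\sfrac12$ by a universal gap once the cut-off identity has excluded the bad mode. Fourth, run the standard excess-decay iteration: the frequency gap plus the smallness \eqref{e:piccolezza} gives that the scale-invariant $L^2$ excess of $v$ over its best model approximation decays geometrically, $E(2^{-k-1})\le\tfrac12 E(2^{-k})+C\e_0^2\,2^{-k}$ or similar, which iterates to $E(\rho)\lesssim\rho^{2\delta_0}$. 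Fifth, translate the decay of the harmonic-conjugate excess back through elliptic regularity and the (inverse) change of variables into pointwise control on $\alpha'$ and $\alpha''$, hence on $\kappa(r)=\frac{d}{dr}(\arctan\text{-type expression})$, obtaining \eqref{e:main_estimate}; the vanishing of the $\rho^{1/2}$-coefficient's derivative at the tip gives the finer $C^{2,\kappa}$ statement needed later, but for Theorem~\ref{t:main2} itself only the $r^{\delta_0}$ bound is required.

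**The main obstacle.** The delicate point is Step 2–3: establishing that the cut-off inner-variation identity genuinely excludes the problematic slowly varying mode \emph{and} survives the perturbation to the nonlinear setting with quantitatively good error terms. The linearized Dirichlet problem on the slit disk has a one-parameter family of homogeneous solutions $\rho^{\gamma}\sin(\gamma\varphi)$ (up to the usual even/odd bookkeeping across the slit) and, after the sign is corrected, one of these has exponent $\gamma$ dangerously near $\sfrac12$; the only thing that removes it is the extra scalar constraint coming from \eqref{e:inner} with the radial test field, and one must check that this constraint is \emph{stable} — that for the true critical point the corresponding boundary integral is not merely zero for the exact model but controls the bad mode's coefficient up to the small error $\e_0$. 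Making this robust, rather than a statement about the exact linear problem, is the crux; it is also where the link to the David–Léger and Maddalena–Solimini identity is used, since that identity is the exact-model version of the same computation. Beyond that, the change-of-variables bookkeeping and the passage from $L^2$-excess decay to pointwise curvature bounds are routine but lengthy, and care is needed because the change of variables is only as regular as $\alpha$, so the iteration must be set up so that each stage only uses the smallness already established, not regularity yet to be proved.
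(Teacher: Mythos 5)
Your global architecture tracks the paper's rather closely: harmonic conjugate to trade Neumann for Dirichlet, Simon-style change of variables to straighten the slit, linearization, a spectral gap above the threshold, and a truncated inner-variation identity to dispose of the slowly varying mode. The genuine gap is in the choice of test field at the crux of Step~2--3. You propose the radial field $\eta(x)=\chi(|x|)\,x$ and identify the resulting identity with the David--L\'eger/Maddalena--Solimini one, claiming it is a boundary-only identity that ``rules out the slowly varying solution.'' Neither claim holds. With $\eta(x)=x$ the identity retains the bulk term $r^{-1}\mathcal H^1(S_u\cap B_r)$, so it is not boundary-only; more importantly, it is \emph{blind} to the spiral mode. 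Linearizing it around the cracktip, the geometric contributions $r^{-1}\mathcal H^1(S_u\cap B_r)$ and $e(p)\cdot\nu(p)$ are both $1+O(\e_0^2)$, so the first-order constraint reduces to the vanishing of $\int_{\partial B_r}\bigl(\partial_\nu\Isq\,\partial_\nu W-\partial_\tau\Isq\,\partial_\tau W\bigr)\,d\mathcal H^1$, where $W$ is the conjugate perturbation. For the spiral mode $W=r^{1/2}\bigl(-\ln r\,\cos\frac\phi2+(\phi-\pi)\sin\frac\phi2\bigr)$ this integral reduces, after the $\ln r\sin\phi$ and $\sin\phi$ pieces integrate out, to $-\int_0^{2\pi}(\phi-\pi)\cos\phi\,d\phi=0$: the constraint is automatically satisfied. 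This is to be expected, as Almgren-type monotonicity sees only the density, and the logarithmic spiral has the same density as the cracktip. The constraint that does the job comes from the same truncated inner-variation machinery (Theorem \ref{t:boundary-variations}) but tested against \emph{isometries} rather than the dilation: combining translations and rotations when $S_u\cap\partial B_r$ is a single point $p$ one obtains the boundary-only identity of Corollary \ref{c:AM-variations}, equation \eqref{e:AM-identity}, whose linearization \eqref{e:condizione_extra} evaluates to a nonzero multiple $-\tfrac{3\pi}{2}c_1$ on the spiral mode and hence kills it.

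A secondary inaccuracy is your description of the bad mode as a homogeneous solution ``$\rho^\gamma\sin\gamma\varphi$ with exponent $\gamma$ dangerously near $\tfrac12$.'' In fact once the boundary motion $\lambda$ is absorbed into the conjugate variable, the linearized problem has a Ventsel-type boundary condition (Lemma \ref{l:Ventsell}) whose spectrum at $\nu_1=\tfrac12$ is deficient: the operator has a genuine Jordan block, the offending solution is the generalized eigenvector $\zeta_0(\phi)=(\phi-\pi)\sin\frac\phi2$, and its coefficient solves $a_0''-a_0'=0$, which admits the \emph{constant} solution. The exponent is exactly $\tfrac12$ with a logarithm, so there is no frequency gap to exploit and it genuinely must be removed by the extra scalar constraint. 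Once this is corrected your remaining steps are a fair sketch: the dyadic decay you describe corresponds to the three-annuli inequality of Proposition \ref{p:tre-anelli} applied to the quadratic functionals $\mathcal E,\mathcal F,\mathcal G$, which is then transferred to the nonlinear problem by compactness and iterated to give Corollary \ref{c:decay_curvature}.
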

\eqref{e:main_estimate} is easily seen to give a $C^{2}$ estimate for the curve $S_u$ and shows as well that the curvature vanishes at the tip. 

We next introduce the harmonic conjugate of $u$. First of all, given a vector $x= (x_1, x_2)\in \mathbb R^2$, we denote by $x^\perp$ its counterclockwise rotation of $90$ degrees,
$x^\perp := (-x_2, x_1)$. 
Second, observe that, by \eqref{e:outer}, the $L^2$ vector field $X := \nabla u^\perp$ is curl-free in the distributional sense, hence $\nabla u^\perp$ is the gradient of a function $w$ which is unique up to addition of a constant\footnote{Note indeed that \eqref{e:outer} is equivalent to $\int X \cdot (\nabla \varphi)^\perp =0$. Fix a family of standard mollifiers $\varphi_\varepsilon$ and observe that, for every fixed $\sigma<1$, provided $\varepsilon>0$ is sufficiently small, $X_\varepsilon:= X * \varphi_\varepsilon$ is smooth in $B_\sigma$ and satisfies $\int \varphi\, {{\textrm{curl}}\, X_\varepsilon} = - \int X_\varepsilon \cdot \nabla^\perp \varphi = 0$ for every $\varphi \in C^\infty_c (B_\sigma)$. This shows that $X_\varepsilon$ is curl-free in the classical sense and hence the gradient of a smooth potential $w^\sigma_\varepsilon$, which we can assume to have average zero in $B_\sigma$. By $H^1$ compactness, as $\varepsilon \downarrow 0$, $w^\sigma_\varepsilon$ converges, up to subsequences, to a $H^1$ potential $w^\sigma$ for $X$ on $B_\sigma$. Letting $\sigma\uparrow 1$ and again using standard Sobolev space theory, we conclude the existence of $w\in H^1$ such that $\nabla w = X$.}. Next note that, since $S_u$ is smooth on $B_1\setminus \{0\}$ and, by
\eqref{e:outer}, $u$ and $w$ are harmonic on $B_1\setminus S_u$, they both have smooth traces on $S_u\setminus \{0\}$ from both sides of $S_u$. Being $w$ in $H^1$ the two traces of $w$ agree on $S_u\setminus \{0\}$ and thus $w$ is continuous on $S_u\setminus\{0\}$ and therefore in $B_1\setminus \{0\}$. Moreover, again by \eqref{e:outer}, $u$ satisfies the Neumann boundary condition on $S_u\setminus \{0\}$, which in turn implies that the tangential derivative of the traces of $w$ along $S_u\setminus \{0\}$ vanishes. Thus $w$ is constant on $S_u\setminus \{0\}$ and, by possibly adding a constant to it, we fix its value on the curve to be equal to $0$. Finally, note that we can apply Bonnet's monotonicity formula, {{that it is valid in particular for critical points (cf. \cite[Theorem~3.1]{B96}),} and thus conclude that
\[
\int_{B_r} |\nabla w|^2 = \int_{B_r\setminus S_u} |\nabla u|^2 \leq r \int_{B_{1}} |\nabla u|^2\, . 
\]
In particular a simple scaling argument using regularity of harmonic functions implies that $\|\nabla w\|_{L^\infty (B_r\setminus B_{r/2})} \leq C r^{-1/2}$. Hence $w$ extends continuously to the point $0$ (and in fact it belongs to $C^{1/2}_{loc} (B_1)$). We can restate Theorem~\ref{t:main2} using the harmonic conjugate as follows:

\begin{theorem}\label{t:main3}
There are universal constants $\e_0, \delta_0, C >0$ with the following property. Let $\alpha: ]0,1[\to \mathbb R$ be a smooth function with 
$\sup_r (r |\alpha' (r)|+ r^2 |\alpha'' (r)|) \leq \e_0$ and set
\begin{equation}\label{e:crack-tip-2}
K := \big\{ r (\cos \alpha (r), \sin \alpha (r)) : r\in ]0,1[\big\}\, .
\end{equation}
Let $w\in H^1\cap C^{1/2} (B_1)$ be such that $\Delta w=0$ on $B_1\setminus K$, $w|_K =0$ and the identity
\begin{equation}\label{e:inner-2}
 \int_{\Omega \setminus K} (|\nabla w|^2 \operatorname{div} \eta - 2 (\nabla w^\perp)^T\cdot \nabla \eta\cdot \nabla w^\perp) = - \int_K e(x)^T \cdot \nabla\eta\cdot  e(x) \, d\mathcal{H}^1(x)
\end{equation}
holds for every $\eta\in C^1_0 (B_1, \mathbb R^2)$.
Then \eqref{e:main_estimate} holds. 
\end{theorem}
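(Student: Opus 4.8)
The plan is to implement the three ideas (i)--(iii) from the introduction. First I would unfold the slit disk $B_1\setminus K$ by the conformal map $z=\zeta^2$: in the variable $\zeta$ the domain becomes the upper half-disk $D^+:=\{|\zeta|<1,\ \zeta_2>0\}$ and the model cracktip profile $\sqrt r\,\sin(\theta/2)$ becomes the linear function $\zeta_2$. Under this map $K$ becomes a curve $\tilde K=\{\phi=\beta(\rho)\}$ with $\rho|\beta'(\rho)|+\rho^2|\beta''(\rho)|\lesssim\e_0$; composing with a diffeomorphism $\Psi$ of $D^+$ that straightens $\tilde K$ onto the diameter $(-1,1)\times\{0\}$ and is the identity near $\partial D^+\cap\{\zeta_2>0\}$, one gets a function $\bar w$ on $D^+$ solving $\operatorname{div}(A\nabla\bar w)=0$ with the \emph{homogeneous Dirichlet} condition $\bar w=0$ on the diameter, $A$ symmetric with $\|A-\Id\|_{C^0}+\| |\zeta|\,\nabla A\|_{C^0}\lesssim\e_0$. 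This is point (ii): the Neumann condition for $u$ has turned into a clean Dirichlet condition. Since $\kappa(r)=(2\alpha'(r)+r\alpha''(r))\,(1+O(\e_0))$, the estimate \eqref{e:main_estimate} is equivalent to a decay rate for a Lipschitz-type quantity built from $\beta$, and the inner variational identity \eqref{e:inner-2} transforms into the corresponding identity for $\bar w$ and the straightened crack. Carrying out the whole analysis on the fixed domain $D^+$ is point (iii) and avoids discussing the behavior at the tip before the statement is proven.

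Next comes the linear picture. The harmonic functions on $D^+$ vanishing on the diameter are spanned by $\rho^n\sin(n\phi)$, $n\ge1$; by Bonnet's monotonicity the blow-up of $\bar w$ at the origin is the leading mode $c_1\zeta_2$, and one may assume $c_1\ne0$ (if $c_1=0$ then $\bar w$, and hence $u$, is automatically more regular and \eqref{e:main_estimate} follows directly). Rescaling $\bar w$ at a dyadic scale $2^{-j}$ and renormalizing by $2^{-j}$, every mode with $n\ge2$ carries a coefficient of size $O(2^{-j(n-1)})$, hence is harmless for a Simon-type iteration; the \emph{only} obstruction is one extra slowly decaying solution of the coupled linearized problem, a logarithmic resonance --- this is the ``slowly varying solution not generated by a symmetry'' of the introduction, the two genuine symmetries, dilation and rotation, merely perturbing the strength $c_1$ and the limiting direction $\alpha_0$. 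The plan is a Simon-type iteration over the annuli $\{2^{-j-1}<\rho<2^{-j}\}$ in which standard elliptic estimates (Caccioppoli, Poincaré, comparison with harmonic functions) give geometric decay of the component of $\bar w$ orthogonal to the low modes --- hence geometric decay of the excess of $\bar w$ and of the tilt-excess of $K$ --- \emph{provided} the resonant and the $n=2$ coefficients are controlled along the iteration.

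The heart of the proof, and the step I expect to be the main obstacle, is to show that for a critical point the resonant (and the $n=2$) component is in fact absent. For this I would test \eqref{e:inner-2} with the truncated rotation field $\eta(x)=\chi(|x|)\,x^\perp$; since $\operatorname{div}(x^\perp)=0$, $\nabla(x^\perp)$ is antisymmetric and $x\cdot x^\perp=0$, all bulk terms collapse and \eqref{e:inner-2} reduces to the clean identity
\begin{equation*}
2\int_{B_1}\chi'(|x|)\,\partial_r w\,\partial_\theta w\,dx=-\int_K\chi'(|x|)\,\frac{(e\cdot x)(e\cdot x^\perp)}{|x|}\,d\mathcal{H}^1,
\end{equation*}
which is, with the isometry generator $x^\perp$ replacing the dilation generator $x$, the exact analogue of the David-L\'eger-Maddalena-Solimini identity; both arise from applying the truncation method to isometries, resp.\ conformal transformations, of $\mathbb R^2$. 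Cutting $\chi$ off at a dyadic scale $R$ and inserting the mode expansion of $w$ --- corrected for the bent crack --- the two sides must balance, and a careful bookkeeping shows that the slowly-decaying (resonant, resp.\ $n=2$) coefficient of $w$ at scale $R$ is controlled, through its cross-terms with the cracktip mode, by the angular deviation of $K$ over the annulus at scale $R$ plus higher-order errors, which the iteration already bounds. This closes the loop: the resonant component vanishes and the $n=2$ component decays fast enough. The difficulty is concentrated here --- controlling uniformly in $R$ all the error terms (the non-radial part of $K$, the higher modes, the perturbation $A-\Id$), possibly passing to the next order in the expansion where a leading-order balance is degenerate, and dovetailing the identity with the decay iteration.

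Once the iteration closes, the excess of $\bar w$ and the tilt-excess of $K$ decay geometrically; unfolding back, $\alpha(r)$ converges to a limit $\alpha_0$ (so $S_u$ does not spiral) and $|\alpha(r)-\alpha_0|+r|\alpha'(r)|+r^2|\alpha''(r)|\lesssim r^{1+\delta_0}$, whence $|\kappa(r)|=|2\alpha'(r)+r\alpha''(r)|\,(1+O(\e_0))\lesssim r^{\delta_0}$, which is \eqref{e:main_estimate} and in particular gives $\kappa(r)\to0$. Feeding this improved information into Schauder estimates for the straightened equation upgrades the decay to the $C^{2,\kappa}$ regularity of Theorem~\ref{t:main-C2}.
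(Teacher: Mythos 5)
Your overall strategy---unfold, straighten the crack, decompose into modes, kill the slow mode by a truncated inner variation, and run a Simon-type three-annuli iteration---is the paper's strategy in spirit, and your conformal unfolding $z=\zeta^2$ is equivalent to the paper's passage to the coordinates $(\phi,t)=(\theta-\alpha(r),-\ln r)$. There are, however, two substantive gaps. The first concerns the mode basis. You expand $\bar w$ in the Dirichlet eigenfunctions $\rho^n\sin(n\psi)$ of the upper half-disk, but the linearized problem is \emph{not} a Dirichlet problem: the unknown crack direction $\lambda$ enters both as a forcing $\bigl(\dot\lambda-\ddot\lambda\bigr)\cos\tfrac{\phi}{2}$ in the bulk and through the transmission condition on the boundary. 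The paper handles this via the substitution $\zeta=v-\tfrac{\lambda}{\sqrt{2\pi}}\cos\tfrac{\phi}{2}$, which converts the coupled system into a scalar equation with a \emph{Ventsel} boundary condition (Lemma~\ref{l:Ventsell}); the relevant eigenfunctions are $\sin(\nu_k(\phi-\pi))$ with $\nu_k$ solving the transcendental equation \eqref{e:zeros}, and there is a non-diagonalizable Jordan block at $\nu_1=\tfrac12$. These $\nu_k$ are not half-integers, and in particular $\nu=1$ (your $n=2$) is \emph{not} in the spectrum; if it were, the decaying root of $x^2-x-(\nu^2-\tfrac14)=0$ would be $-\tfrac12$, giving $\dot\vartheta\sim r^{1/2}$, hence $\kappa(r)\sim r^{-1/2}$, which blows up rather than vanishes. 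The spectral gap $\nu_2>\tfrac32$ of Lemma~\ref{l:autovalori} is what produces $\delta_0>0$; without identifying the correct eigenbasis, the decay claims underlying your iteration are unsupported, and the ``$n\ge2$ modes are harmless'' statement does not follow.

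The second gap concerns the truncated variation. You test with the pure rotation field $\eta=\chi(|x|)x^\perp$; the resulting identity still carries the crack-set term $\int_K\chi'(|x|)\,\tfrac{(e\cdot x)(e\cdot x^\perp)}{|x|}\,d\mathcal H^1$, and on letting $\chi\to\mathbf 1_{B_r}$ this produces an extra term $e(p)\cdot\tau(p)$ at the point $p=S_u\cap\partial B_r$ (compare \eqref{e:rotations}). The paper's key observation is that one should instead use the \emph{combination} of the truncated constant field $\tau(p)$ and the truncated rotation, tuned so that the $e(p)$-terms from \eqref{e:translations} and \eqref{e:rotations} cancel; this yields Corollary~\ref{c:AM-variations} and identity \eqref{e:AM-identity}, which involves only boundary integrals of $\nabla u$ over $\partial B_r$ and no trace of the crack set whatsoever. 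Linearizing \eqref{e:AM-identity} is then a clean computation giving \eqref{e:condizione_extra}. Your version requires controlling the residual crack term against the tilt-excess inside the iteration---precisely the place where you acknowledge the difficulty concentrates---and it is not clear this closes; the translation-plus-rotation cancellation is the observation that removes the obstruction at the outset.
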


\section{Singular inner variations} 

Clearly if $\eta$ is not compactly supported, the identity \eqref{e:inner} is not valid any more. However, consider the case in which $B_1 \subset \Omega$. We can then take a 
sequence of cut-off functions $\varphi_k \in C^\infty_c (B_1)$ with the property that $\varphi_k \uparrow \mathbf{1}_{B_1}$. Hence we can plug $\varphi_k \eta$ into \eqref{e:inner} and derive, for $k\uparrow \infty$, the analog of \eqref{e:inner}, which results into the same identity with some additional boundary terms.

This procedure was first applied by Maddalena and Solimini in \cite{MS} to the vector field $\eta (x) = x$ to derive an interesting identity discovered independently by David and L\'eger in \cite{DL02}. The general statement is

\begin{theorem}\label{t:boundary-variations}
Let $u$ be a critical point of the Mumford-Shah functional in $\Omega$ such that:
\begin{itemize}
\item[(a)] $B_r \subset \Omega$;
\item[(b)] $S_u\cap \partial B_r$ is a subset of the regular part of $S_u$;
\item[(c)] $S_u$ intersects $\partial B_r$ transversally in a finite number of points.
\end{itemize}
Let $\eta\in C^1 (\overline{B}_r, \mathbb R^2)$. Then:
\begin{align} 
&\int_{B_r \setminus S_u} (|\nabla u|^2 \operatorname{div} \eta - 2 \nabla u^T\cdot \nabla \eta\cdot \nabla u) + \int_{B_r\cap S_u} 
e^T \cdot \nabla\eta\cdot  e \,d\mathcal{H}^1
\nonumber\\
= &\int_{\partial B_r \setminus S_u} \left(|\nabla u|^2 \eta \cdot \nu - 2\frac{\partial u}{\partial \nu}
\nabla u\cdot\eta\right) d\mathcal{H}^1 +
\sum_{p\in S_u \cap \partial B_r} e(p) \cdot \eta (p)\, ,\label{e:int-variation-bdry} 
\end{align}
where  $\nu (x) = \frac{x}{|x|}$ is the exterior unit normal to the circle and $e(p)$ is the tangent unit vector to $S_u$ such that $e (p) \cdot p = |p| (e (p)\cdot \nu (p)) >0$.
\end{theorem}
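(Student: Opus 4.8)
The plan is to run exactly the truncation/cutoff procedure sketched in the paragraph preceding the statement, being careful about the boundary terms that appear in the limit. First I fix a radial cutoff: let $\chi \in C^\infty_c([0,\infty))$ with $\chi \equiv 1$ on $[0,1/2]$, $\chi \equiv 0$ on $[1,\infty)$, $0\le\chi\le1$, and for $k$ large set $\varphi_k(x) := \chi(k(|x|-r)+1)$ — more concretely a smooth function that is $1$ on $\overline B_{r-1/k}$, $0$ outside $B_r$, with $|\nabla \varphi_k| \le Ck$ and $\nabla\varphi_k$ supported in the annulus $A_k := B_r\setminus B_{r-1/k}$. Since $\eta \in C^1(\overline B_r,\mathbb R^2)$ I may extend it to a compactly supported $C^1$ field on $\Omega$ (the precise extension is irrelevant since $\varphi_k$ kills it outside $B_r$), and then $\varphi_k\eta \in C^1_c(\Omega,\mathbb R^2)$ is an admissible test field for \eqref{e:inner}. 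Plugging it in and using $\nabla(\varphi_k\eta) = \varphi_k\nabla\eta + \eta\otimes\nabla\varphi_k$, $\operatorname{div}(\varphi_k\eta) = \varphi_k\operatorname{div}\eta + \nabla\varphi_k\cdot\eta$, I obtain
\begin{align}
&\int_{B_r\setminus S_u}\varphi_k\big(|\nabla u|^2\operatorname{div}\eta - 2\nabla u^T\cdot\nabla\eta\cdot\nabla u\big) + \int_{B_r\cap S_u}\varphi_k\, e^T\cdot\nabla\eta\cdot e\,d\mathcal H^1 \nonumber\\
= &-\int_{A_k\setminus S_u}\big(|\nabla u|^2(\nabla\varphi_k\cdot\eta) - 2(\nabla u\cdot\nabla\varphi_k)(\nabla u\cdot\eta)\big) - \int_{A_k\cap S_u}(\nabla\varphi_k\cdot e)(e\cdot\eta)\,d\mathcal H^1\,. \label{e:truncated}
\end{align}
The left-hand side converges, by dominated convergence, to the two volume/length integrals on the left of \eqref{e:int-variation-bdry} (here I use $|\nabla u|^2 \in L^1(B_r)$ and $\mathcal H^1(S_u\cap B_r) < \infty$, which hold since $u$ is a critical point with finite energy — finiteness of $\mathcal H^1(S_u\cap B_r)$ near $\partial B_r$ follows from (b), regularity there). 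So the whole content is to compute the limit of the right-hand side.

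The main obstacle — and the step I'd spend the most care on — is identifying $\lim_k$ of the right side of \eqref{e:truncated} with the boundary integral plus the sum of tangent vectors at the crossing points. For this I would use the coarea-type structure of $\nabla\varphi_k$: writing $\varphi_k$ as a function of $\rho=|x|$ only, $\nabla\varphi_k = \varphi_k'(\rho)\nu$ with $\int_{r-1/k}^r \varphi_k'(\rho)(-1)\,d\rho = 1$, so $-\nabla\varphi_k\,d x$ acts, after the coarea formula, like an average over circles $\partial B_\rho$, $\rho\in(r-1/k,r)$, converging to evaluation on $\partial B_r$. Concretely, the first annular term equals $\int_{r-1/k}^r \varphi_k'(\rho)\big(\int_{\partial B_\rho\setminus S_u}(|\nabla u|^2\eta\cdot\nu - 2\tfrac{\partial u}{\partial\nu}\nabla u\cdot\eta)\,d\mathcal H^1\big)d\rho$; since assumption (c) guarantees $S_u$ meets $\partial B_\rho$ transversally for all $\rho$ near $r$ (transversality is an open condition and the crossings vary smoothly by (b)), the inner integral is continuous in $\rho$ near $r$, so the weighted average converges to its value at $\rho=r$, which is precisely the first term on the right of \eqref{e:int-variation-bdry}. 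The minus sign works out because $\varphi_k'\le 0$ and $\int\varphi_k'\,d\rho=-1$.

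For the singular part $-\int_{A_k\cap S_u}(\nabla\varphi_k\cdot e)(e\cdot\eta)\,d\mathcal H^1$, I parametrize each regular branch of $S_u$ crossing $\partial B_r$ by arclength $s$ near the crossing point $p$; then along it $\rho = |x(s)|$ with $\frac{d\rho}{ds} = \nu(x(s))\cdot e(x(s)) =: c(s)$, and transversality (c) means $c(s_p)\ne 0$ at each crossing. The integral over that branch becomes $-\int \varphi_k'(\rho(s))\,c(s)\,(e\cdot\eta)(x(s))\,ds$; changing variables to $\rho$ via $ds = d\rho/c(s)$ (valid since $c\ne0$), the factors $c$ cancel and we get $-\int_{r-1/k}^r \varphi_k'(\rho)(e\cdot\eta)(x(\rho))\,d\rho \to (e(p)\cdot\eta(p))$, once we choose the orientation of $e$ so that $\rho$ is increasing along the parametrization, i.e. $c>0$, which is exactly the normalization $e(p)\cdot p = |p|(e(p)\cdot\nu(p))>0$ in the statement. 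Summing over the finitely many crossing points (finite by (c)) yields $\sum_{p\in S_u\cap\partial B_r} e(p)\cdot\eta(p)$. Combining the two limits with the left-hand side gives \eqref{e:int-variation-bdry}. The only remaining technical point is to justify that for a.e.\ small $1/k$ the slices $\partial B_\rho$ behave as claimed and that the $H^1$ trace of $\nabla u$ on these circles is well-defined and varies continuously — this is standard given (b), since near $\partial B_r$ the set $S_u$ is a finite union of $C^{1,\kappa}$ arcs and $u$ is smooth on either side, so all the slice integrals are genuinely continuous in $\rho$, not merely defined for a.e.\ $\rho$.
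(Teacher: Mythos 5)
Your argument is correct and follows essentially the same route as the paper's proof: radial cutoffs $\varphi_k$ plugged into the inner variation identity, dominated convergence on the interior side, and slicing/coarea in the thin annulus for both the bulk term (weighted average over circles $\partial B_\rho$) and the $S_u$ term (arclength parametrization with the Jacobian factor $c(s)=\nu\cdot e$ cancelling under the change of variables $s\mapsto\rho$). The only cosmetic difference is that the paper phrases the annular bulk limit via the weak-$*$ convergence $\nabla\varphi_k\stackrel{*}{\rightharpoonup}-\nu\,\mathcal H^1\restr\partial B_r$ rather than the circle-averaging picture you use, while for the $S_u$ term it invokes the coarea formula with $d(x)=|x|$ — both computations are identical to yours.
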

\begin{proof}
Fix $\eta\in C^1 (\overline{B}_r, \mathbb R^2)$  and 
$\varphi_k\in C_c^{\infty}(B_r)$. Since $\nabla (\varphi_k\eta)=\varphi_k \nabla \eta+\eta\otimes\nabla\varphi_k$, the identity \eqref{e:inner} applied to the test field $\varphi_k\eta\in C^1_c (B_r, \mathbb R^2)$ can be rewritten as 
\begin{align} \label{e:int-variation bis} 
\int_{B_r\setminus S_u} &(|\nabla u|^2 \operatorname{div} \eta - 2 \nabla u^T\cdot \nabla \eta\cdot \nabla u) \varphi_k
+\int_{S_u\cap B_r} \big(e^T \cdot \nabla\eta\cdot  e\big)\varphi_k \, d\mathcal{H}^1\notag\\
=&- \int_{B_r\setminus S_u} \big(|\nabla u|^2 \nabla\varphi_k \cdot\eta- 2 \big(\nabla\varphi_k\cdot \nabla u\big)
\big(\nabla u\cdot \eta\big)\big)
-\int_{S_u\cap B_r} \big(e \cdot \eta\big)\big(e\cdot\nabla \varphi_k\big)\, d\mathcal{H}^1\, .
\end{align}
In order to simplify further our computations assume additionally that $\varphi_k (x) = g_k (|x|)$ for some smooth function $g_k$ of one variable such that
$g_k\equiv 1$ on $[r (1-\frac{1}{k}), r]$,
$|g'_k|\leq 2k r^{-1}$ and $g_k \uparrow \mathbf{1}_{[0,1[}$. Thus the left hand side of \eqref{e:int-variation bis} converges to the first line of \eqref{e:int-variation-bdry} by dominated convergence. Observe next that $\nabla\varphi_k\stackrel{*}{\to} \mu = -\nu\mathcal{H}^1\restr\partial B_r$ in the sense of measures on $\Omega$ and
that, although $\nabla u$ is discontinuous on $S_u$, it is right and left continuous on it in the corona $B_{r(1+\sigma)}\setminus B_{r(1-\sigma)}$ for any sufficiently small $\sigma$. Since $|\mu| (S_u) =0$ the first integral in the right hand side of \eqref{e:int-variation bis} converges to the first integral in the second line of \eqref{e:int-variation-bdry}. As for the last term, enumerate the points $\{p_1, \ldots, p_N\}= S_u \cap \partial B_r$. For every sufficiently small $\sigma$ the set $(B_r\setminus \overline{B}_{r(1-\sigma)}) \cap S_u$ consists of finitely many connected components $\gamma_1, \ldots , \gamma_N$ such that $p_i = \overline{\gamma}_i \cap \partial B_r$. We next compute
\[
\lim_{k\to \infty} \underbrace{\int_{\gamma_i} \big(e \cdot \eta\big)
\big(e\cdot\nabla \varphi_k\big)\, d\mathcal{H}^1}_{=: I_k (i)}\, .
\]
Observe that for each $t\in [r(1-\frac{1}{k}), r]$ and each $i\in \{1, \ldots , N\}$, $\gamma_i\cap \partial B_t$ consists of a single point $p_i (t)$. Moreover choose $e (p)$ on each $\gamma_i$ in such a way that it is continuous and $e (p_i) \cdot \nu (p_i) = e (p_i) \cdot p_i >0$. Using the coarea formula with the function
$d (x) = |x|$ we get
\begin{align*}
I_k (i) = \int_{r (1-\frac{1}{k})}^r \big(e(p_i (t)) \cdot \eta(p_i (t))\big)\left(e(p_i (t))\cdot \frac{p_i (t)}{|p_i (t)|}\right) g'_k (t) \frac{1}{|\nabla d (p_i (t)) \cdot e (p_i (t))|}\, dt\, .
\end{align*}
Observe that $
\nabla d (p_i (t)) \cdot e (p_i (t)) = e(p_i (t))\cdot \frac{p_i (t)}{|p_i (t)|}$
and that the latter is a positive number if $p_i (t)$ is sufficiently close to $p_i (1)=p_i$. Moreover the function $h(t) = \big(e(p_i (t)) \cdot \eta(p_i (t))\big)$ is continuous and $h(1) = e (p_i) \cdot \eta (p_i)$. Therefore we have
\begin{align*}
I_k (i) & = \int_{1-\frac{1}{k}}^1  \big(e(p_i (t)) \cdot \eta(p_i (t))\big) g_k' (t)\, dt\\
&= e (p_i) \cdot \eta (p_i) \int_{1-\frac{1}{k}}^1 g_k' (t)\, dt + \int_{1-\frac{1}{k}}^1  (h(t)-h(1)) g_k' (t)\, .
\end{align*}
Recalling that $\int_{1-\frac{1}{k}}^1 g_k' (t)\, dt = -1$ and that $\|g'_k\|_0 \leq 2k$, the continuity of $h$ shows that $\lim_k I_k (i) = - e (p_i)\cdot \eta (p_i)$. 
\end{proof}

There are two interesting particular classes of vector fields that one could use as tests in \eqref{e:int-variation-bdry}. First of all, if $\eta$ is conformal, then $|v|^2 {\rm div}\, \eta (x) - 2 v^T \cdot \nabla \eta (x) \cdot v=0$ for every $x\in B_1$ and for every $v\in \mathbb R^2$. Therefore the first bulk integral in \eqref{e:int-variation-bdry} disappears. A very particular family of conformal vector fields $\eta$ is of course given by constant vector fields and rotations: for the first $\nabla\eta$ vanishes and for the second $\nabla\eta$ is a (constant) skewsymmetric matrix and thus $v^T\cdot \nabla \eta \cdot v$ vanishes for every vector $v\in \mathbb R^2$.  We thus derive the following simple corollary of Theorem \ref{t:boundary-variations}

\begin{corollary}\label{c:boundary-variations}
Let $u$ be a as in Theorem \ref{t:boundary-variations}.
If $\eta\in C^1 (\overline{B}_r, \mathbb R^2)$ is conformal, then
\begin{align} 
\int_{B_r\cap S_u} e^T \cdot \nabla \eta\cdot  e \, d\mathcal{H}^1
= \int_{\partial B_r \setminus S_u} \left(|\nabla u|^2 \eta \cdot \nu - 2\frac{\partial u}{\partial \nu}
\nabla u\cdot\eta\right)d\mathcal{H}^1 +
\sum_{p\in S_u \cap \partial B_r} e(p) \cdot \eta (p)\, .\label{e:conformal} 
\end{align}
In particular, for every constant vector $v$ we have 
\begin{equation}\label{e:translations}
0 = \int_{\partial B_r \setminus S_u}  \left(|\nabla u|^2 v \cdot \nu - 2\frac{\partial u}{\partial \nu}\frac{\partial u}{\partial v}\right)d\mathcal{H}^1 +
\sum_{p\in S_u \cap \partial B_r} e(p) \cdot v\, 
\end{equation}
and if $\tau$ denotes the tangent to the unit circle, then
\begin{equation}\label{e:rotations}
0=\sum_{p \in S_u \cap \partial B_r} e(p) \cdot \tau(p) - 2 \int_{\partial B_r\setminus S_u}\frac{\partial u}{\partial \nu}\frac{\partial u}{\partial \tau}d\mathcal{H}^1\,.
\end{equation}
\end{corollary}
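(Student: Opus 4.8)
The plan is to deduce Corollary~\ref{c:boundary-variations} directly from Theorem~\ref{t:boundary-variations} by specializing the test field $\eta$ to conformal ones and simplifying the resulting bulk term. The key algebraic fact is that for a conformal vector field $\eta$ on $B_r\subset\mathbb R^2$ one has the pointwise identity
\begin{equation*}
|v|^2\operatorname{div}\eta(x) - 2\, v^T\cdot\nabla\eta(x)\cdot v = 0\qquad\text{for every }x\in B_r,\ v\in\mathbb R^2\, .
\end{equation*}
To see this, recall that $\eta$ conformal means $\nabla\eta(x)$ is, at each point, of the form $\lambda(x)\,\Id + \mu(x)\,J$ with $J$ the rotation by $90$ degrees (equivalently, $\nabla\eta$ satisfies the Cauchy–Riemann equations $\partial_1\eta_1=\partial_2\eta_2$, $\partial_1\eta_2=-\partial_2\eta_1$). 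Then $\operatorname{div}\eta = \operatorname{tr}\nabla\eta = 2\lambda$, while $v^T\cdot\nabla\eta\cdot v = \lambda|v|^2 + \mu\, v^T J v = \lambda|v|^2$ since $J$ is skew-symmetric. Hence $|v|^2\operatorname{div}\eta - 2v^T\cdot\nabla\eta\cdot v = 2\lambda|v|^2 - 2\lambda|v|^2 = 0$. Applying this with $v=\nabla u(x)$ at $\mathcal H^2$-a.e.\ $x\in B_r\setminus S_u$ shows that the first bulk integral on the left-hand side of \eqref{e:int-variation-bdry} vanishes, which is exactly \eqref{e:conformal}.

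For \eqref{e:translations} I would take $\eta\equiv v$ a constant vector field, which is trivially conformal with $\nabla\eta\equiv 0$; then the left-hand side of \eqref{e:conformal} vanishes (the integrand $e^T\cdot\nabla\eta\cdot e$ is identically zero), and rewriting $\nabla u\cdot\eta = \nabla u\cdot v = \frac{\partial u}{\partial v}$ in the boundary integral yields precisely \eqref{e:translations}. For \eqref{e:rotations} I would take the rotation field $\eta(x)=x^\perp = (-x_2,x_1)$, so that $\nabla\eta = J$ is a constant skew-symmetric matrix; this is conformal (it corresponds to $\lambda=0$, $\mu=1$). Then again $e^T\cdot\nabla\eta\cdot e = e^T J e = 0$, so the left-hand side of \eqref{e:conformal} vanishes. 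On $\partial B_r$ one has $\eta(x) = x^\perp = r\,\tau(x)$ where $\tau$ is the unit tangent to the circle, hence $\eta\cdot\nu = r\,\tau\cdot\nu = 0$ kills the $|\nabla u|^2\eta\cdot\nu$ term, $\nabla u\cdot\eta = r\,\frac{\partial u}{\partial\tau}$, and at each $p\in S_u\cap\partial B_r$ we have $e(p)\cdot\eta(p) = r\, e(p)\cdot\tau(p)$. Dividing the resulting identity by $r$ gives \eqref{e:rotations}.

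There is essentially no serious obstacle here: the statement is a formal corollary and the only thing to verify carefully is the conformality computation above together with the geometric identifications $x^\perp = r\tau$ and $\eta\cdot\nu=0$ on the circle. The one point deserving a line of care is that Theorem~\ref{t:boundary-variations} requires $\eta\in C^1(\overline B_r,\mathbb R^2)$, which both the constant field and $x\mapsto x^\perp$ obviously satisfy, and that its hypotheses (a)--(c) on $S_u\cap\partial B_r$ are inherited unchanged. I would therefore present the proof as: first record the pointwise conformality identity and conclude \eqref{e:conformal}; then substitute $\eta = v$ and $\eta = x^\perp$ in turn, using $\nabla u\cdot v = \partial u/\partial v$ in the first case and $x^\perp = r\tau$, $x^\perp\cdot\nu = 0$ in the second, to obtain \eqref{e:translations} and \eqref{e:rotations} respectively.
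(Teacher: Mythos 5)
Your proof is correct and follows essentially the same route as the paper, which states the corollary without a formal proof environment and instead outlines the argument in the paragraph immediately preceding it (vanishing of the bulk term for conformal $\eta$, then specialization to constants and rotations); you have simply filled in the straightforward pointwise computation $|v|^2\operatorname{div}\eta - 2v^T\nabla\eta\, v = 0$ and the geometric identifications $x^\perp = r\tau$, $x^\perp\cdot\nu = 0$ on $\partial B_r$.
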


The David-L\'eger-Maddalena-Solimini identity is given when $\eta(x)=x$ in \eqref{e:conformal}:
\begin{equation}\label{e:DLE}
\frac{1}{r} \mathcal{H}^1 (S_u\cap B_r) = \int_{\partial B_r\setminus S_u} \left(\left(\frac{\partial u}{\partial \tau}\right)^2 - \left(\frac{\partial u}{\partial \nu}\right)^2\right)d\mathcal{H}^1
+ \sum_{p\in S_u \cap \partial B_r} e(p) \cdot 
\nu(p)\, .
\end{equation}
Next, consider the situation in which $S_u\cap \partial B_1$ consists of a single point $p$. We can then take a suitable linear combination of \eqref{e:translations} and \eqref{e:rotations} to derive a boundary integral identity which does not involve the set $S_u$. 

\begin{corollary}\label{c:AM-variations}
Let $u$ be as in Theorem \ref{t:boundary-variations} and assume that $S_u\cap \partial B_r$ is a singleton $\{p\}$. Then, 
\begin{equation}\label{e:AM-identity}
\int_{\partial B_r\setminus \{p\}} \left(|\nabla u (q)|^2 \nu (q) \cdot \tau (p) + 2 \frac{\partial u}{\partial \nu} (q) \nabla u (q) \cdot \left(\tau (q) - \tau (p)\right)\right)\, d\mathcal{H}^1 (q) = 0\, .
\end{equation}
\end{corollary}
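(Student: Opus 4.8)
The plan is to derive \eqref{e:AM-identity} as a linear combination of the translation identity \eqref{e:translations} and the rotation identity \eqref{e:rotations} from Corollary \ref{c:boundary-variations}, exploiting the fact that, once $S_u\cap\partial B_r$ is assumed to consist of the single point $p$, the sums over $S_u\cap\partial B_r$ in those two identities each collapse to the same scalar $e(p)\cdot\tau(p)$, which can therefore be eliminated.

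Concretely, first I would plug into \eqref{e:translations} the \emph{constant} vector $v=\tau(p)$, where $\tau$ is the unit tangent field to $\partial B_r$ (oriented counterclockwise, say) and $\tau(p)$ is its value at $p$. Since $\frac{\partial u}{\partial v}(q)=\nabla u(q)\cdot\tau(p)$ and the boundary sum reduces to $e(p)\cdot\tau(p)$, this gives
\[
0=\int_{\partial B_r\setminus\{p\}}\Big(|\nabla u(q)|^2\,\nu(q)\cdot\tau(p)-2\frac{\partial u}{\partial\nu}(q)\,\nabla u(q)\cdot\tau(p)\Big)\,d\mathcal{H}^1(q)+e(p)\cdot\tau(p)\,.
\]
Second, I would record \eqref{e:rotations}, whose boundary sum likewise reduces to $e(p)\cdot\tau(p)$ and in which $\frac{\partial u}{\partial\tau}(q)=\nabla u(q)\cdot\tau(q)$, with $\tau(q)$ the (varying) unit tangent at the running point $q$:
\[
0=e(p)\cdot\tau(p)-2\int_{\partial B_r\setminus\{p\}}\frac{\partial u}{\partial\nu}(q)\,\nabla u(q)\cdot\tau(q)\,d\mathcal{H}^1(q)\,.
\]
Subtracting the second display from the first cancels $e(p)\cdot\tau(p)$, and after merging the two integrals over $\partial B_r\setminus\{p\}$ and using $-2\frac{\partial u}{\partial\nu}\nabla u\cdot\tau(p)+2\frac{\partial u}{\partial\nu}\nabla u\cdot\tau(q)=2\frac{\partial u}{\partial\nu}\nabla u\cdot(\tau(q)-\tau(p))$ one is left with exactly \eqref{e:AM-identity}.

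There is no genuine obstacle here: the whole content is the recognition that $v=\tau(p)$ is the choice for which \eqref{e:translations} and \eqref{e:rotations} combine into a relation that no longer involves $S_u$. The only points deserving a word are that all the integrals are meaningful — which is already implicit in Corollary \ref{c:boundary-variations}, since the boundary traces of $\nabla u$ and $\frac{\partial u}{\partial\nu}$ are bounded near the single regular, transversal intersection point $p$ by hypotheses (b)--(c) of Theorem \ref{t:boundary-variations} — and that the singleton assumption is essential: for $N\ge 2$ intersection points the translation sum $\sum_i e(p_i)\cdot\tau(p_1)$ and the rotation sum $\sum_i e(p_i)\cdot\tau(p_i)$ no longer coincide, so no $S_u$-free identity of this form survives.
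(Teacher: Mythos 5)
Your proof is correct and takes precisely the route the paper indicates: plug the constant vector $v=\tau(p)$ into \eqref{e:translations}, subtract \eqref{e:rotations} to cancel the common boundary contribution $e(p)\cdot\tau(p)$, and merge the integrals. This is exactly the "suitable linear combination" the paper alludes to just before the corollary, so there is nothing further to add.
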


We will use the latter identity to exclude the ``lowest mode'' in the series expansion of solutions of the linearized equation \eqref{e:lineare}, cf. Section \ref{s:tre-anelli}. This, loosely speaking, corresponds to the competitor argument used by Andersson and Mikayelyan in \cite{AndMyk2} to exclude a similar term in the linearization considered there. Its advantage over the argument used in \cite{AndMyk2} is, however, that only boundary integrals of the actual critical points are involved and we do not need to discuss any harmonic extension of competitors.

\section{Rescaling and reparametrization}\label{s:exp-rep}

Before starting our considerations, we must introduce the model ``tangent function'' of a local minimizer at a loose end,
which in polar coordinates is given by
\begin{align}
\Rad(\phi, r) &:=\sqrt{\textstyle{\frac{2r}{\pi}}}\cos\textstyle{\frac{\phi}{2}}\label{e:Rad}\, 
\end{align}
and whose singular set $S_{\Rad}$ is the open half line $\{(t, 0): t\in \R^+\}$ (in cartesian coordinates). Observe that $\Rad$ is, up to the prefactor $\sqrt{\frac{2}{\pi}}$, the real part of a branch of the complex square root. We will likewise use the notation $\Isq$ for the imaginary part of the same branch multiplied by the same prefactor, namely
\begin{equation}\label{e:Isq}
\Isq (\phi, r) := \sqrt{\textstyle{\frac{2r}{\pi}}}\sin\textstyle{\frac{\phi}{2}}\, .
\end{equation}

It was conjectured by 
De Giorgi that $\Rad$ is the unique {\em global minimizer} in $\mathbb R^2$. In particular its restriction to any bounded open set is a 
minimizer in the sense introduced above. This last property was proved in a remarkable book by Bonnet and David, see \cite{BDBook}.

\subsection{Rescalings}

From now until	 the very last section, $u$ will always denote a critical point of the Mumford-Shah energy 
in $B_1$ satisfying the assumptions of Theorem~\ref{t:main2}. Keeping the notation introduced there, for 
$\rho>0$ set
\begin{align}
u^\rho(\phi, r)&:=\rho^{-\sfrac12}\,u(\phi+ \alpha(\rho\,r), \rho\, r),\label{e:riscala1}\\
\alpha^\rho (r) &:= \alpha (\rho\, r)\, .\label{e:riscala2}
\end{align}
\begin{lemma}\label{l:Bonnet}
 For every $\delta>0$ and for every $k\in\N$ there is $\e_1>0$ such that if $u$ and $\alpha$ are as in
 Theorem~\ref{t:main2} with $\e_0\leq\e_1$, then
 \begin{equation}\label{e:Bonnet0}
 \|u^\rho-\Rad\|_{C^k([0,2\pi]\times[\sfrac12,2])} 
+ \|\alpha^\rho\|_{C^k ([\sfrac 12, 2])}\leq\delta\qquad 
 \forall\,\rho \leq \frac{1}{4}\, .
\end{equation}
\end{lemma}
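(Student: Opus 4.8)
# Proof proposal for Lemma \ref{l:Bonnet}

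\textbf{Overall strategy.} The plan is to argue by contradiction, extracting a limit critical point and identifying it with $\Rad$ via a rigidity/uniqueness statement, then upgrading weak convergence to $C^k$ convergence by elliptic regularity. Suppose the lemma fails for some $\delta>0$ and $k\in\N$: there is a sequence of critical points $u_j$ with crack curves $\alpha_j$ satisfying $\sup_r(r|\alpha_j'(r)| + r^2|\alpha_j''(r)|)\le 1/j$, and scales $\rho_j\le 1/4$, such that $\|u_j^{\rho_j}-\Rad\|_{C^k([0,2\pi]\times[1/2,2])} + \|\alpha_j^{\rho_j}\|_{C^k([1/2,2])} > \delta$. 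The first observation is that $\|\alpha_j^{\rho_j}\|_{C^k([1/2,2])}\to 0$ is essentially automatic: $\alpha_j^{\rho_j}(r) = \alpha_j(\rho_j r)$, so its derivatives on $[1/2,2]$ are controlled by $\rho_j^m \alpha_j^{(m)}(\rho_j r)$, and the smallness hypothesis \eqref{e:piccolezza} forces $|\alpha_j^{\rho_j}|' \le \rho_j r^{-1}|\rho_j r \,\alpha_j'(\rho_j r)|\cdot(\text{bounded})$, hence all derivatives up to order $k$ go to zero (for $k\ge 2$ this uses interpolation or the structure of the ODE the curvature satisfies; in any case the hypothesis is scale-invariant and gives decay). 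What is delicate and nonremovable is that $\alpha_j(r)$ need not tend to $0$ — only its \emph{derivatives} are small — so the curve $S_{u_j}$ is close to \emph{some} radius, not necessarily the horizontal one; after the reparametrization in \eqref{e:riscala1} that translates by $\alpha(\rho r)$ in the angular variable this is harmless, since the reparametrized curve passes through angle $\alpha^\rho(\text{center})\to 0$ with small derivatives, i.e. it converges to the positive half-line.

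\textbf{Compactness of the rescaled functions.} Next I would show $u_j^{\rho_j}$ is precompact. By the normalization and Bonnet's monotonicity formula — valid for critical points, exactly as invoked in the discussion preceding Theorem \ref{t:main3} — the Dirichlet energy of $u_j^{\rho_j}$ on an annulus like $B_4\setminus B_{1/4}$ is uniformly bounded, and likewise $\HH^1(S_{u_j^{\rho_j}})$ on that annulus is bounded (it is a single arc, nearly a segment). The singular sets $S_{u_j^{\rho_j}}$ are, by the smallness of the reparametrized $\alpha$, uniformly close in Hausdorff distance to $\{(t,0):t>0\}$ inside that annulus, and in fact are uniformly $C^{1,\gamma}$ graphs over the segment with small norm (here one uses David's $\e$-regularity statement (A)/(C) together with \eqref{e:piccolezza}, or simply that $\alpha_j^{\rho_j}$ is small in $C^2$). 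On the fixed annulus $B_2\setminus B_{1/2}$, away from and on both sides of this nearly-flat arc, $u_j^{\rho_j}$ is harmonic with Neumann condition on $S_{u_j^{\rho_j}}$; interior and boundary elliptic estimates for harmonic functions with Neumann data on a uniformly $C^{1,\gamma}$ curve then bound $u_j^{\rho_j}$ in $C^{k+1}$ on $[0,2\pi]\times[1/2,2]$ minus nothing — note that after flattening the curve by a diffeomorphism converging to the identity in $C^k$, one gets uniform $C^{k+1}$ bounds. Hence, up to a subsequence, $u_j^{\rho_j}\to u_\infty$ in $C^k([0,2\pi]\times[1/2,2])$ (and on any compact annulus), where $u_\infty$ is a critical point of the Mumford-Shah functional on $B_4\setminus B_{1/4}$, harmonic off $S_{u_\infty} = \{(t,0):t\in(1/4,4)\}$, with the Neumann condition on that segment.

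\textbf{Identification of the limit.} Now I must show $u_\infty = \Rad$ on the annulus. Here is where the monotonicity formula does the real work: Bonnet's monotonicity gives that the normalized energy $r\mapsto r^{-1}\big(\int_{B_r\setminus S_{u_j}}|\nabla u_j|^2 + \HH^1(S_{u_j}\cap B_r)\big)$ is monotone, and the assumption that $u_j$ is \emph{defined and critical on all of $B_1$} with a single arc through the origin forces, as $\rho_j\to$ (blow-up limit), the limiting object to be a \emph{global minimizer cone}, or at least a critical point homogeneous of degree $1/2$ in energy. More directly, since \eqref{e:piccolezza} makes $\alpha_j$ small at \emph{all} scales simultaneously, the monotone quantity at the origin and at, say, radius $1$, are within $o(1)$ as $j\to\infty$; the rigidity case of Bonnet's monotonicity formula then says the blow-up (which is what $u_\infty$ extends to, after also taking $\rho_j\to 0$ or using a diagonal argument across scales) is $\sfrac12$-homogeneous, and the only $\sfrac12$-homogeneous critical point with a single straight half-line as singular set and the right Neumann condition is $\pm\Rad$ (plus a constant, killed by the normalization in \eqref{e:riscala1} which centers $u^\rho$; the sign is fixed by continuity from the hypothesis that $u_j$ is close to $\Rad$, or one simply allows $\Rad$ to mean the correct branch). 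Thus $u_\infty = \Rad$ on $B_2\setminus B_{1/2}$, and combined with $C^k$ convergence and $\|\alpha_j^{\rho_j}\|_{C^k}\to 0$ we contradict the standing assumption that the sum of the two norms exceeds $\delta$.

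\textbf{Main obstacle.} The crux — and the step I expect to require the most care — is the identification $u_\infty=\Rad$: one needs the classification of $\sfrac12$-homogeneous critical points of Mumford-Shah in $\R^2$ whose singular set is a half-line, or equivalently a suitable \emph{uniqueness of blow-up / rigidity} consequence of Bonnet's monotonicity formula, and one must be careful that the hypothesis \eqref{e:piccolezza} (smallness of $\alpha$ only through its derivatives, uniformly in scale) is genuinely what lets the monotone quantity collapse to a constant across the whole range of scales, so that the blow-up limit is forced to be homogeneous. Everything else is soft: the $C^k$ compactness is standard elliptic regularity once the singular curve is known to be uniformly $C^{1,\gamma}$-close to a segment, and the estimate on $\alpha^\rho$ is immediate from scaling.
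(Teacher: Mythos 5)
The paper's own proof is a single sentence invoking Bonnet's blow-up technique \cite{B96} and the higher differentiability theory of \cite{AFP99}, so your proposal is best read as an attempt to flesh that out, and the skeleton you chose (contradiction, elliptic compactness, identification of the limit via Bonnet) is indeed the right one. However, two steps as written do not hold up.

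First, the claim that the $C^k$-smallness of $\alpha^\rho$ is ``essentially automatic'' or follows from ``interpolation'' is not correct. Hypothesis \eqref{e:piccolezza} only controls $\alpha'$ and $\alpha''$; interpolation cannot produce bounds on higher derivatives from below. The higher derivatives of $\alpha$ (equivalently the $C^k$-smallness of the crack) are controlled only through the transmission condition $\mathbf{k}=-|\nabla u^+|^2+|\nabla u^-|^2$ coupled to elliptic estimates for $u$ — i.e.\ the bootstrap of \cite{AFP99}. This is not a cosmetic point: without it your ``uniformly $C^{1,\gamma}$ curve'' step gives only $C^{1,\gamma}$-type bounds on $u_j^{\rho_j}$, not the $C^{k+1}$ bounds you claim, so the compactness in $C^k$ is not established.

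Second, and more seriously, the identification of the limit is not justified as written. You argue that \eqref{e:piccolezza} ``makes the monotone quantity at the origin and at radius $1$ within $o(1)$,'' so rigidity in Bonnet's monotonicity forces $\sfrac12$-homogeneity. But \eqref{e:piccolezza} constrains the \emph{crack}, not the energy: it gives no a priori bound on $r^{-1}\int_{B_r}|\nabla u_j|^2$ at the outer scale, and Bonnet's monotonicity only says this quantity is \emph{nondecreasing} — nothing forces it to be nearly constant. In particular, when $\rho_j$ stays bounded away from $0$ the limit $u_\infty$ is a critical point on a fixed ball with a straight segment as jump set; such $u_\infty$ need not be homogeneous (e.g.\ $u_\infty=\Rad+c\,\mathrm{Re}(z^{3/2})$ is harmonic, satisfies the Neumann condition, and has vanishing jump of $|\nabla u_\infty|^2$). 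The actual mechanism from \cite{B96} is the \emph{classification of blow-ups at the tip}: a blow-up limit there is a global critical point whose singular set is a half-line, hence $\pm\Rad$, and the criticality condition \eqref{e:inner} with translational test fields (cf.\ \eqref{e:translations}) pins the amplitude $c$ to $\pm 1$. Passing from this blow-up statement to the uniform approximation claimed in \eqref{e:Bonnet0} is precisely where the argument needs care, and your proposal glosses over it.

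A smaller point: you correctly note that only derivatives of $\alpha$ are controlled a priori, but then dismiss it as harmless. The conclusion \eqref{e:Bonnet0} includes the $C^0$-norm of $\alpha^\rho$, which cannot be controlled without a normalization of the rotation (compare the explicit normalization $\alpha_j(e^{-a})=0$ imposed later in Proposition \ref{p:linearizzazione}); a sentence fixing this convention should appear in the proof.
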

\begin{proof}
 The statement follows easily from the blow-up technique of Bonnet, see \cite{B96}, and the higher 
 differentiability theory of \cite{AFP99}. 
\end{proof}
\begin{corollary}\label{c:Bonnet}
 For every $\delta>0$ and for every $k\in\N$ there is $\e_1>0$ with the following property. 
 If $u$ and $\alpha$ satisfy the assumptions of Theorem~\ref{t:main2} with $\e_0\leq\e_1$, then
 \begin{equation}\label{e:Bonnet2}
\sup_{[0,2\pi]\times]0,\sfrac12[}
r^{i-\sfrac12}|\partial_\phi^{j}\partial_r^{i}\big(u(\phi + \alpha (r) ,r)-\Rad(\phi,r)\big)|
\leq \delta\quad \text{$\forall\, i+j\leq k$},
\end{equation}
\begin{equation}\label{e:Bonnet2bis}
\sup_{]0,\sfrac 12[}r^i|\alpha^{(i)}(r)|\leq \delta \quad\qquad\qquad\qquad\qquad \forall\, i\leq k\, . 
\end{equation}
\end{corollary}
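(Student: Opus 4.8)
The plan is to obtain Corollary \ref{c:Bonnet} from Lemma \ref{l:Bonnet} by a purely scaling argument: every point $(\phi,r)$ with $r\in\,]0,\sfrac12[$ gets mapped, after rescaling by a suitable $\rho\leq\sfrac14$, into the fixed annulus $r/\rho\in[\sfrac12,2]$ on which Lemma \ref{l:Bonnet} already gives uniform control, and the weights $r^{i-\sfrac12}$ in \eqref{e:Bonnet2} and $r^{i}$ in \eqref{e:Bonnet2bis} are exactly those that render the resulting bound scale invariant. Concretely, set $v(\phi,s):=u(\phi+\alpha(s),s)$, so that by \eqref{e:riscala1} and \eqref{e:riscala2} one has $u^\rho(\phi,r)=\rho^{-\sfrac12}\,v(\phi,\rho r)$ and $\alpha(s)=\alpha^\rho(s/\rho)$. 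The model $\Rad$ is homogeneous of degree $\sfrac12$, i.e. $\Rad(\phi,\rho r)=\rho^{\sfrac12}\Rad(\phi,r)$ by \eqref{e:Rad}, so with $g^\rho:=u^\rho-\Rad$ we get
\[
v(\phi,s)-\Rad(\phi,s)=\rho^{\sfrac12}\,g^\rho(\phi,s/\rho)\qquad\text{for }0<s<1\,,
\]
and hence, for all $i,j\in\N$,
\[
\partial_\phi^{j}\partial_s^{i}\big(v(\phi,s)-\Rad(\phi,s)\big)=\rho^{\sfrac12-i}\,\big(\partial_\phi^{j}\partial_r^{i}g^\rho\big)(\phi,s/\rho)\,,\qquad
\alpha^{(i)}(s)=\rho^{-i}\,(\alpha^\rho)^{(i)}(s/\rho)\,.
\]

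Now fix $(\phi,r)$ with $0<r<\sfrac12$ and choose $\rho:=2r$ if $r\leq\sfrac18$ and $\rho:=\sfrac14$ if $\sfrac18\leq r<\sfrac12$; in either case $\rho\leq\sfrac14$ and $t:=r/\rho\in[\sfrac12,2]$. Evaluating the previous identities at $s=r$, multiplying the first by $r^{i-\sfrac12}$ and the second by $r^{i}$, and using $r^{i-\sfrac12}\rho^{\sfrac12-i}=t^{i-\sfrac12}$, $r^{i}\rho^{-i}=t^{i}$ together with $t\in[\sfrac12,2]$, we obtain for all $i+j\leq k$
\[
r^{i-\sfrac12}\big|\partial_\phi^{j}\partial_r^{i}\big(u(\phi+\alpha(r),r)-\Rad(\phi,r)\big)\big|\leq 2^{k+1}\,\big\|g^\rho\big\|_{C^{k}([0,2\pi]\times[\sfrac12,2])}\,,
\]
\[
r^{i}\,\big|\alpha^{(i)}(r)\big|\leq 2^{k+1}\,\big\|\alpha^\rho\big\|_{C^{k}([\sfrac12,2])}\,.
\]
It then suffices to invoke Lemma \ref{l:Bonnet} with $k$ unchanged and $\delta$ replaced by $2^{-k-1}\delta$: taking $\e_1$ to be the corresponding threshold and assuming $\e_0\leq\e_1$, the right-hand sides above are bounded by $\delta$ for every $\rho\leq\sfrac14$, and in particular for the chosen $\rho=\rho(r)$. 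Taking the supremum over $(\phi,r)\in[0,2\pi]\times\,]0,\sfrac12[$ yields exactly \eqref{e:Bonnet2} and \eqref{e:Bonnet2bis}.

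There is no substantial difficulty in this argument; the only things worth a moment's attention are the accounting of the scaling exponents — it is precisely because the homogeneity degree in \eqref{e:Rad} is $\sfrac12$ that the product $r^{i-\sfrac12}\rho^{\sfrac12-i}$ collapses to the bounded quantity $t^{i-\sfrac12}$, and because $t=r/\rho$ stays in the compact interval $[\sfrac12,2]$ no blow-up occurs even for $i=0$ — and the convention that all the $C^{k}$ norms above are understood, as in Lemma \ref{l:Bonnet}, in the sense of the two one-sided traces away from the jump $\{\phi=0\}$ (both $u^\rho$ and $\Rad$ carry the same jump behaviour there, so $g^\rho$ is smooth up to $S_u$ from each side and Lemma \ref{l:Bonnet} applies in that sense).
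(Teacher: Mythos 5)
Your proof is correct and takes essentially the same route as the paper: both arguments exploit the $\sfrac12$-homogeneity of $\Rad$ and the scaling relations
$\partial_r^i\partial_\phi^j\bigl(u(\phi+\alpha(r),r)-\Rad(\phi,r)\bigr)=\rho^{\sfrac12-i}\,\partial_r^i\partial_\phi^j(u^\rho-\Rad)(\phi,r/\rho)$
and $(\alpha^\rho)^{(i)}(r)=\rho^i\alpha^{(i)}(\rho r)$, then feed them into Lemma~\ref{l:Bonnet}. The only difference is in how the scale $\rho$ is chosen: the paper sets $\rho=r$ (giving $t=1$ and constant exactly $1$, but formally requiring $r\leq\sfrac14$ rather than $r<\sfrac12$), whereas you always pick $\rho\leq\sfrac14$ with $t=r/\rho\in[\sfrac12,2]$ and absorb the resulting $2^{k+1}$ factor by shrinking $\delta$ in Lemma~\ref{l:Bonnet}. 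Your version is marginally more careful about covering the whole range $r\in\,]0,\sfrac12[$ while staying inside the hypotheses $\rho\leq\sfrac14$ of Lemma~\ref{l:Bonnet}, but the underlying mechanism is identical.
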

\begin{proof}
Observe first that
\[
(\alpha^\rho)^{(i)}  (r) = \rho^i \alpha^{(i)} (\rho r)\, .
\]
Taking the supremum in $r\in [\sfrac 12,2]$ in the latter identity, we easily infer 
\[
\rho^i \|\alpha^{(i)}\|_{C^0 ([\sfrac \rho 2, 2\rho])} = \|(\alpha^{\rho})^{(i)}\|_{C^0 ([\sfrac 12, 2])}\,, 
\]
and hence conclude \eqref{e:Bonnet2bis} 
from Lemma~\ref{l:Bonnet}. 

Next, from \eqref{e:riscala1} and the $\sfrac{1}{2}$-homogeneity of $\Rad$ we conclude
\begin{align*}
 u (\phi + \alpha (r) , r) - \Rad (\phi, r) = \rho^{\sfrac{1}{2}} \left( u^\rho \left(\phi, \textstyle{\frac{r}{\rho}}\right)
- \Rad \left(\phi, \textstyle{\frac{r}{\rho}}\right)\right)\, .
\end{align*}
Differentiating the latter identity $j$ times in $\theta$ and $i$ times in $r$, we conclude
\begin{align*}
\partial_r^i\partial_\phi^j\left(u(\phi+\alpha(r) , r) -\Rad (\phi, r)\right)
= \rho^{\sfrac{1}{2} -i} \partial_r^i\partial_\phi^j \left(u^\rho - \Rad\right) \left(\phi, \textstyle{\frac{r}{\rho}}\right)
\end{align*}
Substitute first $\rho=r$ and take then the supremum in 
$\phi$ and $r$ to achieve  \eqref{e:Bonnet2}, 
again from Lemma~\ref{l:Bonnet}.
\end{proof}

\subsection{Reparametrization}

We next introduce the functions 
\begin{align}
\vartheta(t):=&\alpha(e^{-t}) = \alpha^{e^{-t}} (1)\label{e:vartheta}\\
\varrho(t) :=& e^{-t}\big(\cos\vartheta(t),\sin\vartheta(t)\big)\label{e:varrho}\\
 f (\phi, t):=& e^{\sfrac{t}2} w \big(\phi+\vartheta(t), e^{-t}\big) 
=w^{e^{-t}} (\phi, 1)\, ,\label{e:f}\\
\rad (\phi) := &\Rad (\phi, 1).\label{e:radino}\\
\Isq(\phi, r) &:= \sqrt{\textstyle{\frac{2r}{\pi}}}
\sin\left(\textstyle{\frac{\phi}{2}}\right)\label{e:Radw}\\
\isq (\phi) := &\Isq (\phi, 1).\label{e:radinow}
\end{align}

In the next lemma we derive a system of partial differential equations for the functions $f$ and $\vartheta$, 
exploiting the Euler-Lagrange conditions satisfied by $u$ and $S_u$ (cf. \eqref{e:outer} and \eqref{e:inner}). 
We also rewrite the estimates of Corollary~\ref{c:Bonnet} in terms of the new functions.
It is more convenient to work with $w$ rather than $u$, because of the homogeneous Dirichlet boundary condition satisfied by $w$ on $S_u$ instead of its Neumann counterpart satisfied by $u$.

\begin{lemma}\label{l:nonlinear}
 If $u$ satisfies the assumptions of Theorem~\ref{t:main2} and $\vartheta, f$ are given by \eqref{e:vartheta} 
 and \eqref{e:f}, then
\begin{equation}
  \begin{cases}\label{e:SIS}
 \displaystyle{f_t =\frac f4+ f_{\phi\phi}+ f_{tt}+\big(\varthetad f_\phi 
 +\varthetad^2 f_{\phi\phi}-2\varthetad f_{t\phi}-\varthetadd f_\phi \big)}\cr\cr
f(0,t)=f(2\pi,t)=0
 \cr\cr
 \displaystyle{ \frac{\varthetadd - \varthetad-\varthetad^3}{(1+\varthetad^2)^{\sfrac52}}
 =f_\phi^2(2\pi,t)-f_\phi^2(0,t)
%
\,.}
 \end{cases}
\end{equation}
Moreover, for every fixed $\sigma, \delta>0$ and $k\in\N$, the following estimates hold provided $\e_0$ in 
Theorem~\ref{t:main2} is sufficiently small:
\begin{equation}\label{e:decay_g}
 \|\vartheta^{(i)}\|_{C^0([\sigma , \infty[)}\leq\delta \qquad \mbox{for all $i\leq k$,}
\end{equation}
\begin{equation}\label{e:decay_f}
\|\partial_\phi^i\partial_t^j (f - \isq)\|_{C^0([0,2\pi]\times[\sigma,\infty[)} \leq \delta\,
\qquad\mbox{for all 
$i+j\leq k$}.
\end{equation}
\end{lemma}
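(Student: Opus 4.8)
The three equations in \eqref{e:SIS} come, respectively, from the harmonicity of $w$ on $B_1\setminus S_u$, from the Dirichlet condition $w\equiv 0$ on $S_u\setminus\{0\}$, and from the inner variational identity \eqref{e:inner} (equivalently \eqref{e:inner-2}); the estimates \eqref{e:decay_g}--\eqref{e:decay_f} will be Corollary~\ref{c:Bonnet} rewritten in the new variables, once we know that $w^\rho$ is close to $\Isq$. To get the PDE for $f$, note that by \eqref{e:f} we have $w(\psi,r)=r^{1/2}f(\psi-\vartheta(t),t)$ with $t=-\log r$, so, writing the Laplacian in polar coordinates $(\psi,r)$ as $\partial_{rr}+\tfrac1r\partial_r+\tfrac1{r^2}\partial_{\psi\psi}$ and applying the chain rule (with $\partial_r t=-1/r$ and $\partial_r(\psi-\vartheta(t))=\varthetad/r$), one finds $\partial_r w=r^{-1/2}(\tfrac12 f+\varthetad f_\phi-f_t)$ and, after a second differentiation and collecting terms, $r^{3/2}\,\Delta w=\tfrac14 f+f_{\phi\phi}+f_{tt}-f_t+\varthetad f_\phi+\varthetad^2 f_{\phi\phi}-2\varthetad f_{t\phi}-\varthetadd f_\phi$. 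Setting $\Delta w=0$ gives the first line of \eqref{e:SIS}. The boundary conditions are immediate: the curve $S_u=\{\psi=\alpha(r)\}$ corresponds to $\{\phi=0\}$ and, running once around the origin, also to $\{\phi=2\pi\}$, and since $w\equiv 0$ on $S_u\setminus\{0\}$ this yields $f(0,t)=f(2\pi,t)=0$ for all $t$ (hence also $f_t=f_{tt}=0$ there, which I use below).

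Next I would derive the equation for $\vartheta$, which encodes the free-boundary Euler--Lagrange condition along the regular part of $S_u$. Localizing \eqref{e:inner} about a regular point and using the Neumann condition $\partial_n u=0$ (here $n$ is a unit normal to $S_u$) one obtains, in the standard way, that the scalar curvature $\kappa$ of $S_u$ equals the jump of $|\nabla u|^2=|\partial_\tau u|^2$ across $S_u$; since $\nabla u=-(\nabla w)^\perp$ and $w$ is constant on $S_u$, one has $|\partial_\tau u|=|\partial_n w|$ on either side, so $\kappa=|\partial_n w^+|^2-|\partial_n w^-|^2$. For the left-hand side I would apply the planar curve formula to $r\mapsto r(\cos\alpha(r),\sin\alpha(r))$, getting $\kappa=(2\alpha'+r\alpha''+r^2(\alpha')^3)(1+r^2(\alpha')^2)^{-3/2}$, and rewrite it via $\alpha(r)=\vartheta(t)$, $r=e^{-t}$, which turns it into $e^{t}(\varthetadd-\varthetad-\varthetad^3)(1+\varthetad^2)^{-3/2}$. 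For the right-hand side I would compute $\nabla w$ near $\phi=0$ from $w=r^{1/2}f$: using $f(0,t)=f_t(0,t)=0$ one finds $\nabla w|_{\phi=0}=r^{-1/2}f_\phi(0,t)\,(\varthetad\,\hat{e}_r+\hat{e}_\theta)$, and since the unit normal to $S_u$ is $\pm(1+\varthetad^2)^{-1/2}(\varthetad\,\hat{e}_r+\hat{e}_\theta)$ (sign depending on orientation), $|\partial_n w|^2=e^{t}(1+\varthetad^2)f_\phi(0,t)^2$ at $\phi=0$, and likewise $e^{t}(1+\varthetad^2)f_\phi(2\pi,t)^2$ at $\phi=2\pi$. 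Equating the two expressions for $\kappa$, the factors $e^{t}$ cancel and one factor $(1+\varthetad^2)$ survives, producing the third line of \eqref{e:SIS}.

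For the estimates, \eqref{e:decay_g} follows because Fa\`a di Bruno applied to $\vartheta(t)=\alpha(e^{-t})$ expresses $\vartheta^{(i)}(t)$ as a universal linear combination of $r^j\alpha^{(j)}(r)$, $j\le i$, at $r=e^{-t}$; each of these is $\le\delta$ for $r<\tfrac12$ by \eqref{e:Bonnet2bis}, and on $r\in[\tfrac12,e^{-\sigma}]$ (when nonempty) the same bound follows from \eqref{e:piccolezza} together with higher interior regularity of $S_u$ away from $0$ and $\partial B_1$ (a rescaled version of Lemma~\ref{l:Bonnet}). For \eqref{e:decay_f}, I would first upgrade Lemma~\ref{l:Bonnet} to the harmonic conjugate: $w^\rho$ is, up to the constant fixed by $w|_{S_u}=0$ (note $\Isq$ vanishes on $S_{\Rad}$), the harmonic conjugate of $u^\rho$, and both are harmonic on the annulus $[1/2,2]$, so $\|u^\rho-\Rad\|_{C^{k}}\le\delta$ forces $\|w^\rho-\Isq\|_{C^{k}}\le C\delta$ there (Lemma~\ref{l:Bonnet} being available for every $k$, no derivative is truly lost). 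Repeating the argument of Corollary~\ref{c:Bonnet} with $w$ in place of $u$ gives $\sup_r r^{i-1/2}|\partial_\phi^j\partial_r^i(W-\Isq)|\le\delta$ for $W(\phi,r):=w(\phi+\alpha(r),r)$; since $f(\phi,t)-\isq(\phi)=r^{-1/2}(W-\Isq)(\phi,r)\big|_{r=e^{-t}}$ and $\partial_t=-r\partial_r$, every $\partial_\phi^j\partial_t^i(f-\isq)$ is a universal combination of $r^{-1/2}r^k\partial_\phi^j\partial_r^k(W-\Isq)$, $k\le i$, and \eqref{e:decay_f} follows.

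\textbf{Main obstacle.} The delicate step is the equation for $\vartheta$: the correct sign (and the extra metric factor $(1+\varthetad^2)$) in $\kappa=|\partial_n w^+|^2-|\partial_n w^-|^2$ must be tracked scrupulously --- the orientation of $S_u$, the choice of which of $\phi\to 0^+$ and $\phi\to 2\pi^-$ is the $+$ side, and the sign in the free-boundary condition itself --- since this is precisely where \cite{DF-errato} and \cite{AndMyk1} went wrong. As a sanity check one should verify that $\vartheta\equiv 0$, $f=\isq$ solves \eqref{e:SIS} (both sides of the third equation vanish), and, more stringently, check the sign against the linearized equation \eqref{e:lineare}. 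A secondary technical point is not losing a derivative when passing from the $u^\rho$-estimates to the $w^\rho$-estimates, which is handled by the harmonicity of both functions.
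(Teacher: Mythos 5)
Your argument is correct and matches the paper's proof essentially step by step: the $(\phi,t)$ change of variables applied to $\Delta w=0$ and the Dirichlet condition $w|_{S_u}=0$ give the first two equations, the free-boundary Euler--Lagrange condition (curvature of $S_u$ equals the jump of $|\nabla w|^2$) combined with the curvature formula for $r\mapsto r(\cos\alpha(r),\sin\alpha(r))$ gives the third, and the decay estimates follow from Corollary~\ref{c:Bonnet} via Fa\`a di Bruno for $\vartheta$ and harmonic conjugation for $f$. The only difference is notational: you parametrize the curve by $r$ rather than by $t$ as the paper does, so your signed curvature and the sign you put in the curvature--jump identity are both opposite to the paper's, and the two flips cancel; the orientation that pins down the sign, which you rightly flag as the delicate step, is fixed in the paper by the convention $e=\dot\varrho/|\dot\varrho|$ (pointing towards the origin), $n=e^\perp$, and the $+$ trace taken as $\phi\uparrow 2\pi$ (see Figure~\ref{figura-1}).
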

\begin{proof}
 Let us first introduce the unit tangent and normal vector fields to $S_u$ denoted by $e (t)$ 
 and $n (t)$:
 \begin{align*}
  e (t):= \frac{\etad(t)}{|\etad(t)|},\quad n (t):=e^\perp(t).
 \end{align*}
Moreover, we will denote by $\nabla u^+$ and $\nabla u^-$ the traces of $\nabla u$ on $S_u$ where $\pm$ is identified by the direction in which the vector $n$ is pointing. More precisely, if $p\in S_u$, then
\begin{align*}
\nabla u^+ (p) &= \lim_{s\downarrow 0} \nabla u (p+ sn (p))\,,\\
\nabla u^- (p) & = \lim_{s\downarrow 0} \nabla u (p-sn(p))\, .
\end{align*}
Observe that, under the assumptions of Lemma \ref{l:Bonnet}, $e (t)$ is pointing ``inward'', i.e. towards the origin, and hence
for $p = \varrho (t) = (e^{-t} (\cos (\vartheta (t)), \sin (\vartheta (t)))$ we have
\begin{align}
\nabla u^+ (p) &= \lim_{\phi \uparrow 2\pi} \nabla u (e^{-t} (\cos (\vartheta (t) + \phi), \sin (\vartheta (t) +\phi))\, ,\\
\nabla u^- (p) &= \lim_{\phi\downarrow 0} \nabla u (e^{-t} (\cos (\vartheta (t) + \phi), \sin (\vartheta (t) +\phi))\, .
\end{align}
We refer to Figure \ref{figura-1} for a visual illustration.

\begin{figure}
\centering
\begin{tikzpicture}
\draw[>=stealth,->] (-0.5,0) -- (3.5,0);
\node[below] at (3.5,0) {$x_1$};
\draw[>=stealth,->] (0,-0.5) -- (0,2.9);
\node[left] at (0,2.9) {$x_2$};
\draw[very thick] (0,0) to [out=0, in=200] (1.5,0.7) to [out=20, in=180] (2.7, 0.3) to [out = 0, in=225] (3.3,0.6);
\draw[>=stealth,->] (1.5,0.7) -- ({1.5-1.2*cos(20)},{0.7-1.2*sin(20)});
\draw[fill] (1.5,0.7) circle [radius=0.05];
\draw[>=stealth,->] (1.5,0.7) -- ({1.5+1.2*sin(20)},{0.7-1.2*cos(20)});
\node[right] at (1.5,1.1) {$p = \varrho (t)$};
\node[above] at ({1.5-1.2*cos(20)},{0.7-1.2*sin(20)}) {$e(p)$};
\node[below] at  ({1.5+1.2*sin(20)},{0.7-1.2*cos(20)}) {$n(p)$};
\node[below left] at (1.5,0.5) {$+$};
\node[above left] at (1.5,0.7) {$-$};
\end{tikzpicture}
\caption{The tangent vector $e(p)$ and the normal vector $e(p)$ and a point $p\in S_u$. Since $t\mapsto |\varrho (t)|$ is a decreasing function, $e(p)$ points towards the origin. Consequently the convention for the symbols $\pm$ on traces of functions is as illustrated in the picture.}
\label{figura-1}
\end{figure}
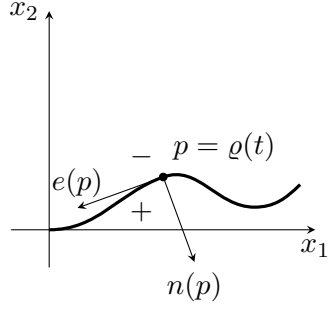

Since $u\colon B_1\subset\R^2\to\R$ is a critical point of the E energy, 
\begin{equation}\label{e:firstvar}
 \begin{cases}
 \triangle u=0 & \mbox{on } B_1\cr
 \frac{\partial u}{\partial n}=0 & \mbox{on } S_u\cr
 \mathbf{k}=-|\nabla u^+|^2+|\nabla u^-|^2\qquad&\mbox{on }  S_u\, .
 \end{cases}
\end{equation}
where $\mathbf{k}$ is the curvature of $S_u$ given by 
\begin{align*}
{\bf k} =& \frac{1}{|\etad(t)|}\dot e(t)\cdot n(t)\, .
\end{align*}
In particular, the harmonic conjugate $w$ of $u$ satisfies
\begin{equation}\label{e:firstvar2}
 \begin{cases}
 \triangle w=0 & \mbox{on } B_1\cr
 w=0 & \mbox{on } S_u\cr
 \mathbf{k}=-|\nabla w^+|^2+|\nabla w^-|^2\qquad&\mbox{on }  S_u\, .
 \end{cases}
\end{equation}
Recalling that
 \begin{equation}\label{e:ftr}
 w(\phi ,r)=r^{\sfrac12}f(\phi-\vartheta(-\ln r) , - \ln r),
  \end{equation}
we compute
\begin{equation}\label{e:utr}
 w_r =r^{-\sfrac12}\left(\frac f2- f_t +\varthetad f_\phi\right),\quad 
 w_\phi =r^{\sfrac 12} f_\phi .
 \end{equation}
Next we recall the formula for the Laplacian in polar coordinates:
\[
\triangle w=0\quad\Longleftrightarrow\quad r^{-2} w_{\phi\phi} +r^{-1} (r w_r )_r =0.
\]
By means of \eqref{e:utr} we get
\[
 r^{-2} w_{\phi\phi} =r^{-\sfrac32} f_{\phi\phi}\, ,
\]
and
\begin{align*}
 r^{-1} (r w_r )_r
=& r^{-1} \left(r^{\sfrac12}\left(\frac f2- f_t +\varthetad f_\phi\right)\right)_r\\
=& r^{-\sfrac32}\left(\frac f4-\frac{f_t}{2}+\frac{\varthetad f_\phi}{2} \right)
+r^{-\sfrac12}\left(- r^{-1}\frac{f_t}{2} +r^{-1} \varthetad\frac{f_\phi}{2} \right)\\
&+r^{-\sfrac{1}{2}}\left(r^{-1} f_{tt} -2r^{-1}\varthetad f_{t\phi} - r^{-1}\varthetadd f_\phi 
+r^{-1}\varthetad^2 f_{\phi\phi}  \right)\\
= &r^{-\sfrac32}\left(\frac f4- f_t+\varthetad f_\phi+ f_{tt} -2\varthetad f_{t\phi}-\varthetadd f_\phi 
+\varthetad^2 f_{\phi\phi} \right).
\end{align*}
In conclusion, we get
\begin{equation}\label{e:laplr}
f_t =\frac f4+ f_{\phi\phi}+ f_{tt}+\big(\varthetad f_\phi 
+\varthetad^2 f_{\phi\phi}-2\varthetad f_{t\phi}-\varthetadd f_\phi \big).
\end{equation}

Next, recalling equality \eqref {e:ftr}, we may  rewrite the Dirichlet condition in the new coordinates simply as 
\begin{equation}\label{e:dir f}
f (0, t)= f(2\pi,t)=0\, . 
\end{equation}

Finally, we derive the equation satisfied by the scalar curvature ${\bf k}$. To this end take into account that
\begin{align}\label{e:doteta}
\dot{\varrho} (t) =& - e^{-t} (\cos \vartheta (t), \sin \vartheta (t)) + e^{-t} \dot\vartheta (t) (-\sin \vartheta (t), \cos \vartheta (t))\nonumber\\
=& - \varrho (t) + \dot\vartheta (t) \varrho^\perp (t)\,, 
\end{align}
and thus differentiating \eqref{e:doteta} we get
\begin{align}\label{e:ddoteta}
\etadd (t) =& - \etad + \ddot\vartheta \varrho^\perp + \dot\vartheta \etad^\perp\, .
\end{align}
On the other hand, explicitely we have
\begin{align}
\dot{\varrho} (t)^\perp &= - e^{-t} (-\sin \vartheta (t), \cos \vartheta (t)) - e^{-t} \dot\vartheta (t) (\cos \vartheta (t), \sin \vartheta (t))\nonumber\\
&= - \varrho^\perp (t) - \dot\vartheta (t) \varrho (t) 
\, .
\end{align}
Hence, we conclude
\begin{align}
{\bf k} =& \frac{1}{|\etad(t)|}\left(\frac{d}{dt}\frac{\etad(t)}{|\etad(t)|}\right)
\cdot\frac{\etad^\perp(t)}{|\etad(t)|}=\frac{\etadd(t)\cdot\etad^\perp(t)}{|\etad(t)|^3}\nonumber\\
=& \frac{(\varthetad+\varthetad^3-\varthetadd) |\varrho (t)|^2}{(1+\varthetad^2)^{\sfrac32}|\varrho (t)|^3}
= r^{-1}\,\frac{\varthetad+\varthetad^3-\varthetadd}{(1+\varthetad^2)^{\sfrac32}}.\label{e:formula_curvatura}
\end{align}
As
\[
|\nabla u|^2=|\nabla w|^2=(w_r)^2+r^{-2}(w_\phi)^2=
r^{-1}\left(\frac f2+\varthetad f_\phi - f_t \right)^2+r^{-1} f_\phi^2
\]
we get
\begin{equation}\label{e:curvr}
 \frac{\varthetad+\varthetad^3-\varthetadd}{(1+\varthetad^2)^{\sfrac32}}= -
 \left. \left[\left(\frac f2+\varthetad f_\phi - f_t \right)^2 
 + f_\phi^2\right]\right|_0^{2\pi}\, .
\end{equation}
Thus, by taking into account \eqref{e:dir f} and \eqref{e:curvr}
we conclude the third equation in \eqref{e:SIS}.

In terms of $\vartheta$ the bound of $\alpha$ in \eqref{e:Bonnet2bis} reads as
\[
 \sup_{t\in[\sigma,\infty[}
 |\vartheta^{(i)}(t)|\leq C_i\,\delta \qquad \mbox{for every 
 $i\leq k$.}
\]
Indeed, differentiating $i$ times the identity $\vartheta (t) = \alpha (e^{-t})$ we get
\[
\vartheta^{(i)} (t) = \sum_{j=1}^i b_{i,j} e^{-jt} \alpha^{(j)} (e^{-t})\, ,
\]
with $b_{i,j}\in \mathbb R$.

Instead, the bound \eqref{e:decay_f} is a consequence of the linearity and continuity of the harmonic conjugation operator, i.e. the circular Hilbert transform, together with the decay \eqref{e:Bonnet2}. Indeed, the latter translates into 
\[
\sup_\phi|\partial_\phi^i\partial_t^j (g - \rad)|\leq C_i\,\delta\, 
\qquad\mbox{for every $t\in[\sigma,\infty[$ and $i+k\leq k$,}
\]
having set $g (\phi, t):= e^{\sfrac{t}2} u \big(\phi+\vartheta(t), e^{-t}\big)$.
Therefore, using the $\sfrac{1}{2}$-homogeneity of $\Rad$, we infer
\begin{align}\label{e:g}
g(\phi, t) - \rad (\phi) &=\;  e^{\sfrac{t}{2}} \left(u (\phi + \alpha (e^{-t}), e^{-t}) - \Rad (\phi, e^{-t})\right)
\notag\\
&=:\,  e^{\sfrac{t}{2}} h (\phi, e^{-t})\, .
\end{align}
We conclude that \eqref{e:Bonnet2} can be reformulated as
\[
\sup_{r\in]0,\sfrac12[}r^{i-\sfrac12}\|\partial^j_\theta \partial^i_r h (\cdot, r)\|_{C^0} \leq C_i\,\delta.
\]
On the other hand, differentiating \eqref{e:g} yields
\[
\partial_\phi^j \partial_t^i (g(\phi, t) - \rad (\phi)) = 
\sum_{\ell=0}^i b_{i, \ell}\, e^{\sfrac{t}{2} - \ell t} [\partial_\phi^j \partial_t^\ell h] (\phi, e^{-t})\,, 
\]
for some $b_{i,\ell}\in \mathbb R$. Setting $r= e^{-t}$, we then conclude 
\[
\|\partial_\phi^i\partial_t^j (g - \rad)\|_{C^0([0,2\pi]\times[\sigma,\infty[)} \leq \delta\,
\qquad\mbox{for all 
$i+j\leq k$}\,,
\]
and thus \eqref{e:decay_f} follows at once.
\end{proof}

\section{First linearization}

In this section we consider a sequence $(u_j, \alpha_j)$ as in Theorem \ref{t:main2} where condition \eqref{e:piccolezza} holds for a vanishing sequence $\e_0 (j)\downarrow 0$. Without loss of generality we assume $\alpha_j (e^{-a}) =0$ for some $a\geq 0$. We next define $\vartheta_j, \varrho_j$ and $f_j$ as in \eqref{e:vartheta}-\eqref{e:f}. Furthermore we fix $T_0>0$ and define:
\begin{align}
f^o_j (\phi, t) &:= {\textstyle{\frac{1}{2}}} (f_j (\phi, t) - f_j (2\pi - \phi, t)) \label{e:v_j}\\
\delta_j &:= \|f^o_j\|_{H^2 ([0,2\pi]\times [a,a+T_0])} + \|\dot{\vartheta_j}\|_{H^1 ([a,a+T_0])}\label{e:delta}\\
v_j (\phi, t) &:= \delta_j^{-1} f^o_j (\phi, t)\label{e:w}\\
\lambda_j (t) &:= \delta_j^{-1} \vartheta_j (t)\label{e:lambda}
\end{align}

\begin{remark}\label{r:even-odd-split}
It is moreover convenient to introduce the following terminology: a function $h$ on $[0,2\pi]\times [a,b]$ will be called even if $h (\phi, t) = h (2\pi-\phi, t)$ and odd if $h (\phi, t) = - h (2\pi - \phi, t)$. Moreover, a general $h$ can be split into the sum of its odd part $h_j^o (\phi, t) := \frac{h (\phi, t) - h (2\pi-\phi, t)}{2}$ and its even part $h_j^e (\phi, t) = \frac{h (\phi, t) + h (2\pi-\phi, t)}{2}$. Note finally that, if $h$ is even (resp. odd), then $\partial_\phi^j \partial_t^k h$ is odd (resp. even) for $j$ odd and even (resp. odd) for $j$ even. 
\end{remark}
Next, we show that the limit of $(v_j,\lambda_j)$ solves a linearization of \eqref{e:SIS}. 
In addition, for future purposes it is also necessary to take into account the linearization 
of \eqref{e:int-variation-bdry} (actually it suffices to consider \eqref{e:AM-identity}).
\begin{proposition}\label{p:linearizzazione}
Let $(u_j, \alpha_j)$ as in Theorem \ref{t:main2} where condition \eqref{e:piccolezza} holds for a vanishing sequence $\e_0 (j)\downarrow 0$. Assume $\alpha_j (e^{-a}) =0$ and define
$\vartheta_j, \varrho_j$ and $f_j$ as in \eqref{e:vartheta}-\eqref{e:f} and $v_j$ and $\lambda_j$ as above. Then, up to subsequences,
\begin{itemize}
\item[(a)] $v_j$ converges weakly in $H^2 ([0, 2\pi]\times [a,a+T_0])$ and uniformly to some odd $v$;
\item[(b)] $\lambda_j$ converges uniformly to some $\lambda$ in $[a,a+T_0]$;
\item[(c)]  the convergences are, respectively, in $C^{2,\alpha} ([0,2\pi]\times [a+\sigma, a+T_0-\sigma])$ 
for $v_j$ and in $C^{2,\alpha}([a+\sigma,a+T_0-\sigma])$ for $\lambda_j$ for all $\sigma\in (0, T_0/2), \alpha \in (0,1)$.
\end{itemize} 
Moreover, the pair $(v, \lambda)$ solves the following linear system of PDEs in $[0,2\pi]\times [a, a+T_0]$
\begin{equation}\label{e:lineare}
\left\{
\begin{array}{l}
v_t - v_{tt} = \frac{v}{4} + v_{\phi\phi} + (\dot\lambda - \ddot\lambda) \isq_\phi\\ \\
v (0, t) = v(2\pi, t) = 0\\ \\
\dot\lambda (t) - \ddot\lambda (t) = 2 \sqrt{{\textstyle{\frac{2}{\pi}}}} v_\phi (0,t)\\ \\
\lambda (0) = 0\, .
\end{array}\right.
\end{equation}
and satisfies the following integral condition for every $\sigma\in (a, a+T_0)$:
\begin{equation}\label{e:condizione_extra}
\int_0^{2\pi} \left[\left({\textstyle{\frac{v}{2}}} - v_t\right) (\phi, \sigma) \left(\cos {\textstyle{\frac{3\phi}{2}}} + \cos {\textstyle{\frac{\phi}{2}}}\right) + v_\phi (\phi, \sigma) \left(\sin {\textstyle{\frac{3\phi}{2}}} + \sin {\textstyle{\frac{\phi}{2}}}\right)\right]\, d\phi + \sqrt{{\textstyle{\frac{\pi}{2}}}} \dot\lambda (\sigma)=0\, .
\end{equation}
\end{proposition}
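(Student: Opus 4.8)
The plan is to establish the three conclusions of Proposition~\ref{p:linearizzazione} in the natural order: first the compactness statements (a)--(c), then the linear PDE system \eqref{e:lineare}, and finally the extra integral identity \eqref{e:condizione_extra}.

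\emph{Compactness.} By construction $\|v_j\|_{H^2([0,2\pi]\times[a,a+T_0])}\le 1$ and $\|\dot\lambda_j\|_{H^1([a,a+T_0])}\le 1$, and since $\vartheta_j(e^{-a})=\vartheta_j(a)=0$, i.e. $\lambda_j(a)=0$, a Poincar\'e inequality bounds $\lambda_j$ in $H^2([a,a+T_0])$. Thus, up to subsequences, $v_j\rightharpoonup v$ in $H^2$ and $\lambda_j\rightharpoonup\lambda$ in $H^2$, with uniform convergence by Sobolev embedding in the relevant dimensions ($H^2$ in two variables embeds in $C^0$, $H^2$ in one variable in $C^1$); oddness of $v$ and the boundary conditions pass to the limit. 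The improved interior convergence in (c) will follow once we know the limit solves an interior-elliptic/parabolic-type equation: I would bootstrap using the interior Schauder estimates for the operator $v\mapsto v_t-v_{tt}-v_{\phi\phi}-\tfrac{v}{4}$ (which is a backward heat operator with lower order terms, hypoelliptic in the interior) applied to $v_j$, together with interior $C^{2,\alpha}$ estimates for the ODE for $\lambda_j$; the forcing terms are controlled by the uniform $H^2$ bounds via Corollary~\ref{c:Bonnet} and Lemma~\ref{l:nonlinear}.

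\emph{Derivation of the linear system.} Here I would start from the nonlinear system \eqref{e:SIS} for $(f_j,\vartheta_j)$. Write $f_j=\isq+\delta_j v_j+(\text{even remainder})$ and recall that $\isq$ itself solves the homogeneous linear equation $\tfrac{1}{4}\isq+\isq_{\phi\phi}=0$ with $\isq(0)=\isq(2\pi)=0$ (indeed $\isq_\phi(0)=\sqrt{2/\pi}\cdot\tfrac12$). Subtract this from the first equation of \eqref{e:SIS}, divide by $\delta_j$, and let $j\to\infty$: the terms $\dot\vartheta_j f_{j,\phi}-\ddot\vartheta_j f_{j,\phi}$ divided by $\delta_j$ converge to $(\dot\lambda-\ddot\lambda)\isq_\phi$ (using $\dot\vartheta_j/\delta_j=\dot\lambda_j\to\dot\lambda$ and $f_{j,\phi}\to\isq_\phi$ in the interior), while the quadratic terms $\dot\vartheta_j^2 f_{j,\phi\phi}$ and $\dot\vartheta_j f_{j,t\phi}$ are $o(\delta_j)$ because $\dot\vartheta_j=O(\delta_j)$ and $\dot\vartheta_j\to 0$ uniformly by \eqref{e:decay_g}; this yields the first line of \eqref{e:lineare}. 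For the third line, linearize the curvature equation: the left side $(\ddot\vartheta_j-\dot\vartheta_j-\dot\vartheta_j^3)/(1+\dot\vartheta_j^2)^{5/2}$ divided by $\delta_j$ converges to $\ddot\lambda-\dot\lambda$ (the cubic and denominator corrections are lower order), and the right side $f_{j,\phi}^2(2\pi,t)-f_{j,\phi}^2(0,t)$ divided by $\delta_j$ equals $(f_{j,\phi}(2\pi,t)+f_{j,\phi}(0,t))\cdot(f_{j,\phi}(2\pi,t)-f_{j,\phi}(0,t))/\delta_j$; since $f_{j,\phi}(2\pi,\cdot)+f_{j,\phi}(0,\cdot)\to\isq_\phi(2\pi)+\isq_\phi(0)=\sqrt{2/\pi}(\cos\pi+1)/2=0$ at the even part but the \emph{odd} part $f^o_j$ contributes $f^o_{j,\phi}(0,t)=-f^o_{j,\phi}(2\pi,t)$, one gets in the limit $2\sqrt{2/\pi}\,\isq_\phi(0)\cdot(-2v_\phi(0,t))$—I must be careful with the sign and with which trace survives, using $\isq_\phi(0)=\isq_\phi(2\pi)=\tfrac12\sqrt{2/\pi}$ and the oddness of $v$, arriving at $\dot\lambda-\ddot\lambda=2\sqrt{2/\pi}\,v_\phi(0,t)$ after flipping sign. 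The boundary conditions $v(0,t)=v(2\pi,t)=0$ and $\lambda(a)=0$ are immediate.

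\emph{The integral identity.} This comes from linearizing \eqref{e:AM-identity} of Corollary~\ref{c:AM-variations} applied to $u_j$ on the circle $\partial B_\rho$ with $\rho=e^{-\sigma}$, after rescaling and reparametrizing as in Section~\ref{s:exp-rep} (so $p$ becomes the point $(\rho,0)$ after the $\alpha_j$-shift, $\tau(p)$ the vertical direction, and the boundary data of $u_j$ are encoded through $g_j$, hence through $f_j$ via the Hilbert transform). One rewrites \eqref{e:AM-identity} entirely in terms of $w_j$—equivalently $f_j$—using $|\nabla u_j|^2=|\nabla w_j|^2$, $\tfrac{\partial u_j}{\partial\nu}=\tfrac{1}{r}\partial_\tau w_j$ and $\nabla u_j\cdot(\text{vector})$ expressed via $\nabla w_j^\perp$; the result is a quadratic boundary functional in $f_j$ and $\dot\vartheta_j$. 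Subtracting the value of this functional at the background $\isq$ (which vanishes by the exact symmetry of $\Rad$/$\Isq$ under the relevant conformal field—$\Rad$ is $\tfrac12$-homogeneous and symmetric, so the identity holds with equality and no first-order term) and dividing by $\delta_j$, the surviving linear-in-perturbation part produces exactly the integrand in \eqref{e:condizione_extra}; the trigonometric kernels $\cos\tfrac{3\phi}{2}+\cos\tfrac{\phi}{2}$ and $\sin\tfrac{3\phi}{2}+\sin\tfrac{\phi}{2}$ arise from $\nu(q)\cdot\tau(p)$ and $\tau(q)-\tau(p)$ combined with the $r^{-1/2}$, $r^{1/2}$ weights in \eqref{e:utr} evaluated against the half-integer Fourier modes of $f$, and the term $\sqrt{\pi/2}\,\dot\lambda(\sigma)$ comes from the $\sum_p e(p)\cdot\tau(p)$ contribution, where $e(p)\cdot\tau(p)$ linearizes (through the tangent to $S_{u_j}$, i.e. through $\dot\vartheta_j$) to $\delta_j$ times a multiple of $\dot\lambda(\sigma)$, the constant $\sqrt{\pi/2}$ being fixed by the normalization $\sqrt{2/\pi}$ in \eqref{e:Rad}.

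\emph{Main obstacle.} The delicate point is \emph{not} the passage to the limit per se—that is routine once the interior estimates of (c) are in place—but rather the careful bookkeeping of the quadratic-to-linear reduction: identifying precisely which first-order term in the expansion of each nonlinear/boundary term survives division by $\delta_j$, controlling all the others as $o(\delta_j)$ uniformly (this uses crucially that $\dot\vartheta_j=O(\delta_j)$ \emph{and} $\dot\vartheta_j\to 0$, as well as the interior $C^{2,\alpha}$ convergence $f_j\to\isq$), and getting every sign and numerical constant right—especially in the curvature equation and in the boundary functional, where the odd/even splitting of Remark~\ref{r:even-odd-split} is what singles out $v_\phi(0,t)$ and $\dot\lambda$ as the relevant quantities. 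A secondary technical issue is justifying that Corollary~\ref{c:AM-variations} applies on $\partial B_\rho$ for a.e.\ $\rho$ (transversality and regularity of $S_{u_j}\cap\partial B_\rho$), which follows from the hypotheses of Theorem~\ref{t:main2} together with Sard's theorem in $\rho$.
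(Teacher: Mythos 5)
Your outline is broadly aligned with the paper's strategy — weak $H^2$ compactness from the normalization, odd/even splitting and passage to the limit for the PDE and transmission condition, elliptic bootstrap for interior regularity, and linearization of the boundary identity from Corollary~\ref{c:AM-variations}. The compactness and PDE portions are essentially right (the paper organizes the PDE step slightly differently, by taking the exact odd part of the first line of \eqref{e:SIS} to get \eqref{e:odd-PDE}, but your ``subtract and divide by $\delta_j$'' amounts to the same thing). However, there is a genuine gap in your account of \eqref{e:condizione_extra}.

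You attribute the term $\sqrt{\pi/2}\,\dot\lambda(\sigma)$ to ``the $\sum_p e(p)\cdot\tau(p)$ contribution'' from the boundary identity. But the identity you cite, \eqref{e:AM-identity}, has \emph{no} $e(p)$ term at all: it was obtained precisely by combining \eqref{e:translations} with $v=\tau(p)$ and \eqref{e:rotations} so that the two $e(p)$ contributions ($e(p)\cdot\tau(p)$ from each) cancel, leaving a pure boundary-integral identity on $\partial B_r\setminus\{p\}$. If you carry out your plan and look for the $\dot\lambda$ term in a $\sum_p$ contribution, you will not find it. The $\dot\lambda(\sigma)$ term actually arises from the chain rule when differentiating $w_j(r,\phi)=r^{1/2}\,f^e_j\bigl(\phi-\delta_j\lambda_j(-\ln r),\,-\ln r\bigr)+\delta_j r^{1/2}\,v_j(\cdots)$ with respect to $r$: the $r$-dependence of the argument shift $\delta_j\lambda_j(-\ln r)$ produces a term $\delta_j\dot\lambda_j\,f^e_{j,\phi}\approx\delta_j\dot\lambda_j\,\isq_\phi=\delta_j\,\frac{\dot\lambda_j}{\sqrt{2\pi}}\cos\tfrac{\phi}{2}$ in the first-order expansion of $\partial_r w_j$ (this is exactly the $b_j(\phi)$ term in \eqref{e:dr10}). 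Integrating the resulting squares against $\sin\phi$ and $(1+\cos\phi)$ and using $\int_0^{2\pi}\bigl(\sin^2\phi-(1+\cos\phi)^2\bigr)\,d\phi=-2\pi$ is what produces the $\sqrt{\pi/2}\,\dot\lambda(\sigma)$. Without noticing this, the crucial $\dot\lambda$ piece of \eqref{e:condizione_extra} would simply be missing from your derivation.

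A minor secondary point: for step (c) you describe the operator as a ``backward heat operator, hypoelliptic in the interior.'' After moving $v_{tt}$ to the other side the principal part is $(1+\delta_j^2\dot\lambda_j^2)\,v_{tt}+v_{\phi\phi}$, a uniformly elliptic (Laplace-type) operator in $(\phi,t)$, and the bootstrap is via standard interior elliptic estimates coupled with the transmission ODE \eqref{e:transmission} for $\lambda_j$; calling it parabolic could lead you to use the wrong regularity theory. Also, since $S_{u_j}$ is by hypothesis a smooth graph in polar coordinates, it meets every circle $\partial B_\rho$ transversally in exactly one point, so no Sard-type argument is needed to apply Corollary~\ref{c:AM-variations}.
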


\begin{proof}[Proof of Proposition \ref{p:linearizzazione}] First of all, by a simple rescaling argument we can assume $a=0$. 
(a) and (b) are obvious consequences of the bounds on $(\lambda_j, v_j)$ (and of the fact that {$H^2 ([0,2\pi]\times[0,T_0])$ (resp. $H^2 ([0,T_0])$) embeds compactly in 
$C ([0,2\pi]\times[0,T_0])$ (resp. $C([0, T_0])$)}. Observe that, by assumption,
$\lambda_j (0) =0$ and thus $\lambda (0) =0$ is a consequence of the uniform convergence. Likewise the boundary condition $v (0, \cdot) = v (2\pi, 0) =0$ is also a consequence of uniform convergence and $v_j (0, \cdot) = v_j (0, \cdot) = 0$. 

We next observe that the PDE in \eqref{e:SIS} is linear in the unknown $f$. Hence, setting $f_j = f_j^e + \delta_j v_j$, we can take the odd part of each sides of the equation and, using Remark \ref{r:even-odd-split} infer
\begin{equation}\label{e:odd-PDE}
v_{j,tt} + v_{j,\phi\phi} = -\frac{v_j}{4} + (v_j)_t + (\ddot\lambda_j - \dot\lambda_j) f^e_{j,\phi} + 2 \dot\lambda_j f^e_{j, t\phi}  - \delta_j^2 \dot\lambda^2_j v_{j,\phi\phi}
\end{equation}
Observe that $f_j^e \to \isq$ in $C^2 ([0,2\pi]\times [0, T])$ and in $C^k ([0,2\pi]\times [\sigma, T-\sigma])$ for every $k$ and every $\sigma>0$ by Lemma \ref{l:nonlinear}. Passing into the limit we therefore conclude easily that $v$ solves the PDE in the first line of \eqref{e:lineare}.

We now rewrite the equation above in the following way:
\begin{align}
(1+ \delta_j^2 \dot \lambda_j^2) v_{j,tt} + v_{j,\phi\phi} = \underbrace{-\frac{v_j}{4} + v_{j,t} + (\ddot\lambda_j - \dot\lambda_j) f^e_{j,\phi} + 2 \dot\lambda_j f_{j,t\phi}}_{=:G_j}
\label{e:bootstrap}
\end{align}
Observe that, by our assumptions, the left hand side is an elliptic operator with a uniform bound on the ellipticity constants and a uniform bound on the $C^{1/2}$ norm of the coefficients. 

We next write the third equation in \eqref{e:SIS} in terms of $\lambda_j, v_j$ and $f^e_j$:
\begin{align}\label{e:transmission}
\dot\lambda_j - \ddot\lambda_j = \delta_j^2 \dot\lambda_j^3 + 4 (1+\delta_j^2 \dot\lambda_j^2)^{\sfrac{5}{2}} f^e_{j,\phi} (0,t) v_{j,\phi} (0,t)\, . 
\end{align}
Observe that, by the trace theorems, $v_{j,\phi} (0, \cdot)$ enjoys a uniform bound in $H^{1/2}$. Clearly, by the $C^2$ convergence of $f^e_j$ to $\isq$ we get that the third equation in \eqref{e:lineare} holds. Moreover, by the Sobolev embedding we conclude that the right hand side has a uniform control in $L^q$ for every $q<\infty$, in particular the same bound is enjoyed by $\ddot\lambda_j - \dot\lambda_j$ and, using that $\|\dot \lambda_j\|_{C^0}$ is bounded, we conclude that $\dot\lambda_j$ has a uniform $W^{1,q}$ bound for every $q<\infty$. 

Inserting the new estimates in \eqref{e:bootstrap} we can get a uniform bound on $\|G_j\|_{L^q ([0,2\pi]\times [0, T])}$ for every $q<\infty$. Using elliptic regularity we conclude a uniform bound for $\|v_j\|_{W^{2,q} ([0,2\pi]\times [\sigma, T-\sigma])}$. We now can use Morrey's embedding to get a uniform estimate on $\|v_j\|_{C^{1,\alpha} ([0,2\pi]\times [2\sigma, T-2\sigma])}$ for every $\alpha <1$. We now turn again to \eqref{e:transmission}, to conclude that the right hand side has a uniform $C^\alpha$ bound in $[2\sigma, T-2\sigma]$ for every $\alpha >0$. This gives uniform $C^{1,\alpha}$ bounds on the coefficient of the elliptic operator in the left hand side of \eqref{e:bootstrap} and uniform $C^\alpha$ bounds on the right hand side of  \eqref{e:bootstrap}. We can thus infer a uniform $C^{2,\alpha}$ bound in $[0,2\pi]\times [3\sigma, T-3\sigma]$ on $v_j$ from elliptic regularity.

It thus remains to prove \eqref{e:condizione_extra}. The latter will come from \eqref{e:AM-identity}. First of all, we fix $\sigma$, set $r_0 := e^{-\sigma}$ and observe that $\partial B_{r_0} \cap S_{u_j}$ consists of a single point $p_j$. We can thus apply Corollary \ref{c:AM-variations}. Hence using the relation between harmonic conjugates, we rewrite \eqref{e:AM-identity} as 
\[
\int_{\partial B_{r_0}\setminus \{p_j\}} \left(|\nabla w_j (q)|^2 \nu (q) \cdot \tau (p_j) - 2 \frac{\partial w_j}{\partial \tau}  (q)\nabla w_j (q) \cdot (\nu (q) - \nu (p_j))\right)\, d\mathcal{H}^1 (q) = 0\, . 
\]
Next, we assume without loss of generality that $p_j =0$ and rewrite the latter equality using polar coordinates:
\begin{equation}\label{e:A+B=0}
\underbrace{\int_0^{2\pi} \Big(r_0 w_{j,r}^2 -  \frac{1}{r_0} w_{j,\phi}^2\Big) (\phi, r_0) \sin \phi\, r_0\, d\phi}_{=:A_j}
- \underbrace{2 \int_0^{2\pi} \left(w_{j,r} w_{j,\phi}\right) (\phi, r_0) (1 + \cos \phi)\, d\phi}_{=:B_j} = 0
\end{equation}
We next write $w_j$ in terms of $v_j$, $\gamma_j$ and $\lambda_j$  as
\[
w_j (r, \phi) = r^{\sfrac{1}{2}} f^e_j (\phi - \delta_j \lambda_j (-\ln r), - \ln r)
+ \delta_j r^{\sfrac{1}{2}} v_j (\phi - \delta_j \lambda_j (-\ln r), -\ln r)\, \, ,
\]
Note that, having normalized so that $p_j =0$, we conclude that $\lambda_j (-\ln r_0) = \lambda_j (\sigma) = 0$. Using the latter we compute:
\begin{align}
\frac{\partial w_j}{\partial r} (\phi, r_0) = & \underbrace{r_0^{-\sfrac{1}{2}} \left(\frac{f_j^e}{2} - f_{j,t}\right) (\phi, \sigma)}_{=:a_j (\phi)} + \delta_j \underbrace{r_0^{-\sfrac{1}{2}} \left(\frac{\dot \lambda_j (\sigma)}{\sqrt{2\pi}} \cos \frac{\phi}{2} +\frac{v_j (\phi, \sigma)}{2} - v_{j,t} (\phi, \sigma)\right)}_{=: b_j (\phi)} + o (\delta_j)\label{e:dr10}\\
\frac{\partial w_j}{\partial \phi} (\phi, r_0) = & \underbrace{r_0^{\sfrac{1}{2}} f^e_{j,\phi}}_{=: c_j (\phi)} + \delta_j \underbrace{r_0^{\sfrac{1}{2}} v_{j,\phi} (\phi, \sigma)}_{=:d_j (\phi)}\label{e:dphi10}\, .
\end{align}
Note now that the function $a_j^2 - \frac{1}{r_0} c_j^2$ is even. Since $\sin \phi$ is odd, we thus conclude
\begin{align*}
A_j &= 2 \delta_j \int_0^{2\pi} \left(r_0 a_j (\phi) b_j (\phi) - r_0^{-1} c_j (\phi) d_j (\phi)\right)\, \sin \phi\, d\phi\,  + o (\delta_j)\, . 
\end{align*}
Letting $j\to\infty$ we obtain
\begin{align}
\lim_{j\to\infty} \delta_j^{-1} A_j &= \sqrt{\frac{2}{\pi}} \int_0^{2\pi} \left(\sin \frac{\phi}{2} \left(\frac{\dot \lambda (\sigma)}{\sqrt{2\pi}} \cos \frac{\phi}{2} +\frac{v (\phi, \sigma)}{2} - v_t (\phi, \sigma)\right) - \cos \frac{\phi}{2} v_\phi (\phi, \sigma) \right)\sin \phi\, d\phi\, .\label{e:limite-A}
\end{align}
Similarly, $a_j c_j$ is odd and therefore $\int a_j c_j (1+ \cos \phi)\, d\phi = 0$, from which we conclude
\begin{align*}
B_j & = 2 \delta_j \int_0^{2\pi} (a_j (\phi) d_j (\phi) + b_j (\phi) c_j (\phi)) (1+ \cos \phi)\, d\phi + o (\delta_j)\, .  
\end{align*}
Hence 
\begin{align}
& \lim_{j\to \infty} \delta_j^{-1} B_j\nonumber\\ 
= & \sqrt{\frac{2}{\pi}} \int_0^{2\pi} \left(\sin \frac{\phi}{2} v_\phi (\phi, \sigma) + \cos \frac{\phi}{2}\left(\frac{\dot \lambda (\sigma)}{\sqrt{2\pi}} \cos \frac{\phi}{2} +\frac{v (\phi, \sigma)}{2} - v_t (\phi, \sigma)\right)\right) (1+\cos \phi)\, d\phi\, .\label{e:limite-B}
\end{align}
Combining \eqref{e:limite-A} and \eqref{e:limite-B} with \eqref{e:A+B=0} we conclude
\begin{align*}
0 = & \frac{\dot\lambda (\sigma)}{2\pi} \int_0^{2\pi} (\sin^2 \phi - (1+\cos \phi)^2)\, d\phi
 - \sqrt{\frac{2}{\pi}} \int_0^{2\pi} v_\phi (\phi, \sigma) \left( \sin \frac{\phi}{2} + \sin \frac{3\phi}{2} \right)\, d\phi\\
 & -\sqrt{\frac{2}{\pi}} \int_0^{2\pi} \left( \frac{v}{2} (\phi, \sigma) - v_t (\phi, \sigma)\right) \left(\cos \frac{\phi}{2} + \cos \frac{3\phi}{2}\right)\, d\phi\, .
\end{align*}
Using $\int_0^{2\pi} (\sin^2 \phi - (1+\cos \phi)^2) d\phi = -2\pi$, we conclude \eqref{e:condizione_extra}.
\end{proof}

\section{Spectral analysis}

In this section we will find a suitable representation of odd solutions of \eqref{e:lineare} on a domain $[0,2\pi]\times [0, T]$, based on the spectral analysis of a closely related linear PDE. 
Next we consider the change of variables
\begin{equation}\label{e:zeta}
\zeta (\phi, t) := v (\phi, t) - \lambda (t) \isq_\phi (t) = v (\phi, t) - \frac{\lambda (t)}{\sqrt{2\pi}} \cos \frac{\phi}{2}\, .
\end{equation}

\begin{lemma}\label{l:Ventsell}
The pair $(v, \lambda)\in H^2\times H^3$ solves \eqref{e:lineare} if and only if $\zeta(\cdot,t)$ is odd, $\zeta (0,0) = 0$
$\zeta(0,t)=-\frac{\lambda(t)}{\sqrt{2\pi}}$, and $\zeta$ solves
the following partial differential equation with Ventsel boundary conditions:
\begin{equation}\label{e:Ventsell}
\left\{
\begin{array}{l}
\zeta_{tt} + \zeta_{\phi\phi} + \frac{\zeta}{4} - \zeta_t = 0\\ \\
\zeta_\phi (0, t) + \frac{\pi}{2} \left(\frac{\zeta}{4} (0,t) + \zeta_{\phi\phi} (0,t)\right) = 0\, .
\end{array}
\right.
\end{equation} 
\end{lemma}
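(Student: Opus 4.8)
The plan is to simply substitute the definition $\zeta = v - \lambda \isq_\phi$ (equivalently $v = \zeta + \frac{\lambda}{\sqrt{2\pi}}\cos\frac{\phi}{2}$) into the system \eqref{e:lineare} and check that the bulk PDE together with the boundary conditions for $(v,\lambda)$ are equivalent to the bulk PDE for $\zeta$ together with the single Ventsel boundary condition in \eqref{e:Ventsell} (plus the elementary algebraic facts: $\zeta(\cdot,t)$ odd, the value $\zeta(0,t) = -\frac{\lambda(t)}{\sqrt{2\pi}}$, and $\zeta(0,0)=0$). I would first record the derivatives of the ``correction'' term: writing $\Theta(\phi,t) := \frac{\lambda(t)}{\sqrt{2\pi}}\cos\frac{\phi}{2} = \lambda(t)\isq_\phi(\phi)$ (since $\isq(\phi) = \sqrt{\frac{2}{\pi}}\sin\frac\phi2$ gives $\isq_\phi = \frac{1}{\sqrt{2\pi}}\cos\frac\phi2$), one has $\Theta_\phi = -\frac{\lambda}{2\sqrt{2\pi}}\sin\frac\phi2$, $\Theta_{\phi\phi} = -\frac14\Theta$, $\Theta_t = \dot\lambda\isq_\phi$, $\Theta_{tt} = \ddot\lambda\isq_\phi$. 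The key cancellation is that $\cos\frac\phi2$ is (up to constant) an eigenfunction of $\partial_{\phi\phi}$ with eigenvalue $-\frac14$, which is exactly the constant appearing in the bulk operator $\partial_{tt}+\partial_{\phi\phi}+\frac14-\partial_t$; this is the reason the change of variables is designed the way it is.

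Next I would substitute into the bulk equation. The first line of \eqref{e:lineare}, rewritten as $v_{tt} + v_{\phi\phi} + \frac{v}{4} - v_t = -(\dot\lambda - \ddot\lambda)\isq_\phi$, becomes, after inserting $v = \zeta + \Theta$ and using $\Theta_{\phi\phi} + \frac14\Theta = 0$,
\begin{equation*}
\zeta_{tt} + \zeta_{\phi\phi} + \frac{\zeta}{4} - \zeta_t + \big(\ddot\lambda - \dot\lambda\big)\isq_\phi = -(\dot\lambda - \ddot\lambda)\isq_\phi\,,
\end{equation*}
so the two $\isq_\phi$ terms cancel and we are left with precisely $\zeta_{tt} + \zeta_{\phi\phi} + \frac{\zeta}{4} - \zeta_t = 0$, the first line of \eqref{e:Ventsell}. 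Thus the bulk PDE for $v$ (given the third line of \eqref{e:lineare}) is equivalent to the bulk PDE for $\zeta$, and conversely if $\zeta$ solves the bulk equation and we \emph{define} $v := \zeta + \Theta$ then $v$ automatically satisfies the first line of \eqref{e:lineare} for \emph{any} $\lambda$; the content of the remaining two lines of \eqref{e:lineare} must then be encoded in the boundary condition and in the bookkeeping relations. For the boundary: $v(0,t)=0$ forces $\zeta(0,t) = -\Theta(0,t) = -\frac{\lambda(t)}{\sqrt{2\pi}}$, which is one of the stated relations, and similarly $v(2\pi,t)=0$ together with evenness of $\cos\frac\phi2$ in the reflection $\phi\mapsto 2\pi-\phi$ (note $\cos\frac{2\pi-\phi}{2} = -\cos\frac\phi2$, so $\Theta$ is odd) gives that $\zeta$ is odd; the normalization $\lambda(0)=0$ is exactly $\zeta(0,0)=0$. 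Finally, $v_\phi(0,t) = \zeta_\phi(0,t) + \Theta_\phi(0,t) = \zeta_\phi(0,t)$ since $\Theta_\phi(0,t) = -\frac{\lambda}{2\sqrt{2\pi}}\sin 0 = 0$; hence the third line of \eqref{e:lineare}, $\dot\lambda - \ddot\lambda = 2\sqrt{\frac2\pi}\,v_\phi(0,t) = 2\sqrt{\frac2\pi}\,\zeta_\phi(0,t)$, must be re-expressed purely in terms of $\zeta$. Using $\lambda(t) = -\sqrt{2\pi}\,\zeta(0,t)$ we get $\dot\lambda - \ddot\lambda = -\sqrt{2\pi}\,(\zeta_t(0,t) - \zeta_{tt}(0,t))$, and from the bulk equation evaluated at $\phi = 0$, $\zeta_{tt}(0,t) - \zeta_t(0,t) = -\zeta_{\phi\phi}(0,t) - \frac14\zeta(0,t)$, so $\dot\lambda - \ddot\lambda = -\sqrt{2\pi}\big(\zeta_{\phi\phi}(0,t) + \frac14\zeta(0,t)\big)$. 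Equating with $2\sqrt{\frac2\pi}\,\zeta_\phi(0,t)$ and dividing by $2\sqrt{\frac2\pi} = \frac{4}{\sqrt{2\pi}}$ yields $\zeta_\phi(0,t) = -\frac{\pi}{2}\big(\zeta_{\phi\phi}(0,t) + \frac14\zeta(0,t)\big)$, which is exactly the Ventsel condition in \eqref{e:Ventsell}.

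Running the argument in reverse is immediate: given $\zeta$ odd with $\zeta(0,0)=0$ solving \eqref{e:Ventsell}, define $\lambda(t) := -\sqrt{2\pi}\,\zeta(0,t)$ and $v := \zeta + \frac{\lambda}{\sqrt{2\pi}}\cos\frac\phi2$; then $\lambda(0)=0$, $v$ is odd, $v(0,t)=v(2\pi,t)=0$, the bulk equation for $v$ holds by the cancellation above, and reversing the boundary computation (using the bulk equation at $\phi=0$ again) recovers the third line of \eqref{e:lineare}. The regularity bookkeeping $(v,\lambda)\in H^2\times H^3 \iff \zeta \in$ the appropriate space is routine since the map $(v,\lambda)\leftrightarrow\zeta$ and its inverse are bounded linear operations (the inverse involves only the trace $\zeta(0,\cdot)$, which by the trace theorem and the Ventsel relation sits in the right space). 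I do not anticipate a genuine obstacle here: the only thing to be careful about is the direction of the equivalence, i.e. making sure that the third line of \eqref{e:lineare} is \emph{not} lost but rather absorbed into the Ventsel boundary condition via the bulk equation, and that the relation $\zeta(0,t) = -\lambda(t)/\sqrt{2\pi}$ is recorded as part of the dictionary rather than as an extra constraint.
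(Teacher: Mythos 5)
Your substitution argument is correct and is exactly the elementary computation the paper leaves to the reader: the cancellation $\Theta_{\phi\phi}+\frac14\Theta=0$ (with $\Theta := \lambda\isq_\phi = \frac{\lambda}{\sqrt{2\pi}}\cos\frac\phi2$) removes the correction term from the bulk operator, $\Theta_\phi(0,t)=0$ identifies $v_\phi(0,t)$ with $\zeta_\phi(0,t)$, and substituting $\zeta_{tt}-\zeta_t=-(\zeta_{\phi\phi}+\frac{\zeta}{4})$ at $\phi=0$ converts the ODE for $\lambda$ in \eqref{e:lineare} into the Ventsel condition, with the constant $\sqrt{2\pi}\big/\big(2\sqrt{2/\pi}\big)=\pi/2$ exactly as you found. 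The one assertion to repair is the claim that $v(2\pi,t)=0$ ``gives that $\zeta$ is odd'': the Dirichlet condition at $\phi=2\pi$ only forces $\zeta(2\pi,t)=-\zeta(0,t)$, the endpoint consequence of oddness, not the reflection symmetry in $\phi$. Oddness of $\zeta$ is instead inherited from oddness of $v$ (which is built into the setting via Proposition \ref{p:linearizzazione}(a), even though it is not written into \eqref{e:lineare} itself) together with the oddness of $\Theta$ under $\phi\mapsto 2\pi-\phi$, which you did verify; with that correction the forward and reverse directions both close up as you describe.
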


Taking into account standard regularity theory, the lemma is reduced to elementary computations which are left to the reader.
We aim at a representation for $\zeta$, i.e. a representation as a series of functions in $\phi$ with coefficients depending on $t$ for which we can reduce \eqref{e:Ventsell} to an independent system of ODEs for the coefficients. To that aim we introduce the space 
\begin{equation}\label{e:O}
\mathcal{O} := \{g \in H^1 (]0, 2\pi[): g (\phi) = - g (2\pi -\phi)\}\, ,
\end{equation}
The representation is detailed in the following 

\begin{proposition}\label{p:odd}
\begin{equation}\label{e:representation_odd}
\zeta(\phi,t)= \sum_{k=0}^\infty a_k(t)\zeta_k (\phi)\, ,
\end{equation}
where:
\begin{itemize}
\item[(a)] $C^{-1} \sum_k a_k^2 (t) \leq \|\zeta (\cdot, t)\|^2_{H^1}\leq C \sum_k a_k^2 (t)$ for a universal constant $C$; 
\item[(b)] The functions $\zeta_k$ are defined in Section \ref{s:spectral};
\item[(c)] For $k\geq 2$, the coefficients $a_k$ satisfy $a_k (t) = \langle \zeta (\cdot, t), \zeta_k \rangle$ for the bilinear symmetric form $\langle\cdot, \cdot\rangle$ defined in Section \ref{s:ventsel} (cf. \eqref{e:quasi_prodotto}), while the 
coefficients $a_0 (t)$ and $a_1 (t)$ are given by $a_0 (t) = \mathcal{L}_0 (\zeta (\cdot, t))$ and $a_1 (t) = \mathcal{L}_1 (\zeta (\cdot, t))$ for appropriately defined linear bounded functionals $\mathcal{L}_0, \mathcal{L}_1 : \mathcal{O} \to \mathbb R$. 
\end{itemize}
Next, if $\zeta\in H^2 ([0,2\pi]\times [0, T])$ is odd and solves \eqref{e:Ventsell} then $\zeta\in C^\infty ([0,2\pi]\times (0, T))$ and
for every $k\geq 2$ the coefficients $a_k (t)$ in the expansion satisfy $a_k''(t) - a_k' (t) =  (\nu_k^2 - \frac{1}{4}) a_k (t)$, where the number $\nu_k$'s are given in Lemma \ref{l:autovalori}.
\end{proposition}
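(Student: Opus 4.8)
The proposition has a \emph{static} component --- the expansion \eqref{e:representation_odd} with properties (a)--(c), valid for every odd $\zeta\in\mathcal{O}$ and really a statement about the spectral decomposition in the $\phi$-variable --- and a \emph{dynamic} component: under the PDE \eqref{e:Ventsell} the coefficients $a_k$ decouple into scalar second-order ODEs for $k\geq2$. I would proceed in four steps. The first (the content of Section~\ref{s:spectral}) is the spectral analysis of the one-dimensional operator $A:=-\partial_\phi^2-\tfrac14$ carrying the Ventsel boundary condition of \eqref{e:Ventsell}. Separating variables, one looks for $\zeta_k\in\mathcal{O}$ with $\zeta_k''=-\nu_k^2\zeta_k$ (so $A\zeta_k=(\nu_k^2-\tfrac14)\zeta_k$); membership in $\mathcal{O}$ forces $\zeta_k$ to be a multiple of $\sin(\nu_k(\phi-\pi))$, and plugging this into the Ventsel condition turns it into a transcendental equation for $\nu_k$ alone, relating $\nu\cot(\nu\pi)$ to a polynomial in $\nu$; its roots are the $\nu_k$ of Lemma~\ref{l:autovalori}. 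Two structural facts, which explain why $k=0,1$ are singled out everywhere, must be extracted here: the smallest root $\nu=\tfrac12$ (giving $\zeta_0\propto\cos\tfrac{\phi}{2}$ and eigenvalue $0$) is a \emph{double} root of the transcendental equation while its eigenspace is one dimensional --- so $\zeta_1$ is only a generalized eigenfunction --- and for large $k$ the root $\nu_k$ agrees with an explicit reference frequency up to an error which is $O(k^{-1})$, in particular square-summable.

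Next I would prove (a)--(c). By the asymptotics just described, $\{\zeta_k\}$, suitably normalized in $H^1$, is --- up to a finite-rank modification --- quadratically close to a standard orthogonal sine basis of $\mathcal{O}$, so by a classical perturbation criterion for Riesz bases (Bari type) it is itself a Riesz basis of $\mathcal{O}$; this gives the expansion \eqref{e:representation_odd}, the two-sided bound (a), and the boundedness of the coefficient functionals. The symmetric form $\langle\cdot,\cdot\rangle$ of \eqref{e:quasi_prodotto} is the form naturally attached to the Ventsel problem, engineered so that $A$ is $\langle\cdot,\cdot\rangle$-symmetric on the class of functions satisfying the Ventsel condition and the $\zeta_k$ are pairwise $\langle\cdot,\cdot\rangle$-orthogonal; one then checks that it is positive definite, and the $\zeta_k$ are $\langle\cdot,\cdot\rangle$-orthonormal, on the closed subspace $V:=\overline{\operatorname{span}}\{\zeta_k:k\geq2\}$, while it degenerates on the complementary plane $\operatorname{span}\{\zeta_0,\zeta_1\}$. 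Hence $a_k=\langle\zeta,\zeta_k\rangle$ for $k\geq2$, whereas $a_0,a_1$ are the coordinates of $\zeta$ in the two-dimensional complement and are recovered by the bounded functionals $\mathcal{L}_0,\mathcal{L}_1$ associated with the topological splitting $\mathcal{O}=V\oplus\operatorname{span}\{\zeta_0,\zeta_1\}$.

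For the dynamic part I would first upgrade the regularity of a solution $\zeta\in H^2([0,2\pi]\times[0,T])$ of \eqref{e:Ventsell}. In the interior $]0,2\pi[\times]0,T[$ the equation reads $\zeta_{\phi\phi}+\zeta_{tt}=\zeta_t-\tfrac14\zeta$, a constant-coefficient elliptic equation, so a routine bootstrap yields $\zeta\in C^\infty$ there. Near the edges $\{\phi=0\}$, $\{\phi=2\pi\}$ I would use the equation to rewrite the Ventsel condition as the dynamic (Wentzell) condition $\zeta_\phi(0,t)=\tfrac{\pi}{2}\big(\zeta_{tt}(0,t)-\zeta_t(0,t)\big)$, treat the boundary trace $\beta(t):=\zeta(0,t)$ as an auxiliary unknown coupled to the bulk equation, and bootstrap in the resulting system (or invoke the regularity theory for Wentzell problems) to conclude $\zeta\in C^\infty([0,2\pi]\times]0,T[)$. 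With $\zeta$ smooth, fix $k\geq2$ and differentiate $a_k(t)=\langle\zeta(\cdot,t),\zeta_k\rangle$ twice in $t$ --- legitimate, since $\langle\cdot,\cdot\rangle$ involves only $\phi$-derivatives of order at most one together with boundary evaluations --- to get $a_k''-a_k'=\langle\zeta_{tt}-\zeta_t,\zeta_k\rangle=\langle A\zeta(\cdot,t),\zeta_k\rangle$ by \eqref{e:Ventsell}. Since $\zeta(\cdot,t)$ itself satisfies the Ventsel condition, the boundary contributions in the integration by parts cancel and the $\langle\cdot,\cdot\rangle$-symmetry of $A$ gives $\langle A\zeta(\cdot,t),\zeta_k\rangle=\langle\zeta(\cdot,t),A\zeta_k\rangle=(\nu_k^2-\tfrac14)a_k(t)$, which is the asserted ODE. (The same computation on the low modes produces a triangular $2\times2$ system reflecting the Jordan block at $\nu=\tfrac12$, which is exactly why only $k\geq2$ is claimed.)

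The two genuinely delicate points, which I expect to be the main obstacle, both live at the edge $\{\phi\in\{0,2\pi\}\}$: proving regularity up to the boundary for the Ventsel/Wentzell condition, which is not covered by the standard Dirichlet or Neumann elliptic estimates and requires either a dedicated argument or an appropriate citation; and making rigorous the symmetry identity $\langle A\xi,\eta\rangle=\langle\xi,A\eta\rangle$ for $\xi,\eta$ obeying the Ventsel condition, while keeping precise track of the degenerate directions $\zeta_0,\zeta_1$, so that one is certain that for $k\geq2$ the clean ODE holds with no residual coupling to the low modes. The remaining ingredients --- the transcendental root count, the Riesz-basis perturbation lemma, and the interior bootstrap --- are standard once the framework is in place.
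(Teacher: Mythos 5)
Your overall plan — a static spectral decomposition in $\phi$ followed by a dynamic reduction to decoupled ODEs for $k\geq 2$ — matches the paper's, which derives Proposition~\ref{p:odd} as ``an obvious consequence'' of Proposition~\ref{p:spettro}. You also correctly identify the Jordan-block structure at $\nu=\tfrac12$ as the reason the modes $k=0,1$ are special (though your indexing is swapped relative to the paper's: there $\zeta_1=\cos\frac{\phi}{2}$ is the eigenfunction and $\zeta_0=(\phi-\pi)\sin\frac{\phi}{2}$ the generalized eigenfunction), and your derivation of the ODE for $k\geq 2$ by differentiating $a_k(t)=\langle\zeta(\cdot,t),\zeta_k\rangle$ and using the $\langle\cdot,\cdot\rangle$-symmetry of the operator is exactly in the spirit of Lemma~\ref{l:autoaggiunto}(d).

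Where you diverge is the proof that $\{\zeta_k\}_{k\geq 0}$ is a Riesz basis of $\mathcal{O}$. The paper does not use a Bari-type perturbation criterion; it proves Proposition~\ref{p:spettro} by a direct spectral-theoretic route: it shows $\langle\cdot,\cdot\rangle$ is an equivalent scalar product on the closed subspace $Y:=\overline{\operatorname{span}}\{\zeta_k\}_{k\geq 2}$ (which requires first showing $\cos\frac{\phi}{2}\notin Y$), so that the resolvent $\mathscr{A}$ is compact and self-adjoint on $Y$ and $\{\zeta_k\}_{k\geq 2}$ is a $\langle\cdot,\cdot\rangle$-orthonormal Hilbert basis of $Y$; it then proves $Z\cap Y=\{0\}$ and, crucially, $Z+Y=\mathcal{O}$ by an explicit contradiction argument on the orthogonal complement $X^\perp$, using the Jordan structure of $\mathscr{A}$ to force $X^\perp$ to be one-dimensional. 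Your Bari-type sketch is plausible in principle but underspecified at precisely the point the paper labors over: you must name the reference Riesz basis of $\mathcal{O}$ in $H^1$ and verify quadratic closeness in the $H^1$ norm. The natural sine system $\{\sin(m(\phi-\pi))\}_{m\geq 1}$ to which the tail $\{\zeta_k\}_{k\geq 2}$ converges spans only $H^1_0\cap\mathcal{O}$, a proper closed subspace of $\mathcal{O}$; so the ``finite-rank modification'' you allude to (inserting $\zeta_0,\zeta_1$) must be shown to fill the missing two codimensions and not merely to be linearly independent. Moreover, your $O(k^{-1})$ error estimate for $\nu_k$ — needed for square-summability — is stronger than what Lemma~\ref{l:autovalori} records ($\nu_k/k\to 1$, and $\nu_k\in\,]k-1,k[$) and would itself need a proof. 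So the Bari route trades the paper's explicit completeness argument for a quantitative asymptotic estimate on the zeros of the transcendental equation plus a careful identification of the reference basis; neither is carried out in your sketch, and together they constitute the main gap. The rest — the transcendental root analysis, the interior elliptic bootstrap for $C^\infty$ regularity, and the ODE reduction — is sound and consistent with the paper.
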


The proof is an obvious consequence of Proposition \ref{p:spettro}, which will be the main focus of this section.

\subsection{The Ventsel boundary condition}\label{s:ventsel}
For every $g\in \mathcal{O}$ we look for solutions $h\in \mathcal{O}$ of the following equation:
\begin{equation}\label{e:ventsel}
 \begin{cases}
 \displaystyle{h_{\phi\phi} = g}\cr\cr
 \displaystyle{h_\phi (0) = - \frac{\pi}{2} \left(\frac{h (0)}{4} + h_{\phi\phi} (0)\right) \, .}
\end{cases}
\end{equation}
The following is an elementary fact of which we include the proof for the reader's convenience.
\begin{lemma}\label{l:risolvente}
For every $g \in \mathcal{O}$ there is a unique solution $h:= \mathscr{A} (g)\in \mathcal{O}$ of 
\eqref{e:ventsel}. In fact the operator $\mathscr{A} \colon \mathcal{O} \to \mathcal{O}$ is compact.
\end{lemma}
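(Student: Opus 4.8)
The statement to prove is Lemma \ref{l:risolvente}: for every $g \in \mathcal{O}$ there is a unique $h = \mathscr{A}(g) \in \mathcal{O}$ solving \eqref{e:ventsel}, and $\mathscr{A}$ is compact. The plan is to solve the ODE $h_{\phi\phi} = g$ explicitly by double integration, which produces a two-parameter family of solutions in $\mathcal{O}$, and then to pin down the two free constants by imposing the oddness constraint together with the single boundary condition at $\phi = 0$ (the boundary condition at $\phi = 2\pi$ being automatic once oddness is used, since oddness about $\pi$ couples the two endpoints). Concretely, write $h(\phi) = \int_0^\phi \int_0^s g(\tau)\, d\tau\, ds + A\phi + B$. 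The oddness condition $h(\phi) = -h(2\pi - \phi)$ forces one linear relation on $(A,B)$ (and is compatible because $g$ itself is odd about $\pi$, so the particular integral has a definite parity after the affine correction); the Ventsel boundary condition $h_\phi(0) = -\frac{\pi}{2}(\frac{h(0)}{4} + h_{\phi\phi}(0))$ gives a second linear relation, noting $h_{\phi\phi}(0) = g(0)$ is already determined by the data. I would check that the resulting $2\times 2$ linear system for $(A,B)$ is non-degenerate — this is the crux of the uniqueness/existence claim — and that the solution depends linearly and boundedly on $g$.

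The non-degeneracy check is where I expect to have to be slightly careful: one must verify that the homogeneous problem ($g \equiv 0$) has only the trivial solution in $\mathcal{O}$. For $g = 0$ one has $h(\phi) = A\phi + B$; oddness about $\pi$ gives $A\phi + B = -(A(2\pi-\phi) + B)$, i.e. $2B = -2\pi A$, so $B = -\pi A$ and $h(\phi) = A(\phi - \pi)$; then $h_\phi(0) = A$, $h(0) = -\pi A$, $h_{\phi\phi}(0) = 0$, and the boundary condition reads $A = -\frac{\pi}{2}\cdot\frac{-\pi A}{4} = \frac{\pi^2 A}{8}$, forcing $A(1 - \tfrac{\pi^2}{8}) = 0$, hence $A = 0$ since $1 - \pi^2/8 \neq 0$. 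So the homogeneous problem is trivial and the affine system is invertible; existence and uniqueness follow, and the explicit formula shows $g \mapsto h$ is linear.

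For the regularity and compactness statement, I would argue as follows. From the explicit representation, $\|h\|_{H^2(]0,2\pi[)} \lesssim \|g\|_{L^2(]0,2\pi[)}$: the double antiderivative of an $L^2$ function lies in $H^2$ with the natural bound, and the affine part is controlled by the boundedness of the linear functionals giving $(A,B)$ in terms of $g$ (which only involve $\int g$, pointwise values of antiderivatives of $g$ — continuous on $H^1 \hookrightarrow C^0$, or even just on $L^2$ after one integration — and $g(0)$, continuous on $H^1$). Hence $\mathscr{A}$ maps $\mathcal{O} \subset H^1$ boundedly into $H^2 \cap \mathcal{O}$. Since $H^2(]0,2\pi[)$ embeds compactly into $H^1(]0,2\pi[)$ by Rellich–Kondrachov in one dimension, the composition $\mathscr{A}: \mathcal{O} \to \mathcal{O}$ is compact.

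The main obstacle is really just bookkeeping: making sure the oddness constraint is stated and used correctly (oddness is about the point $\phi = \pi$, so it is an even symmetry of the interval $[0,2\pi]$ under $\phi \mapsto 2\pi - \phi$, which is what forces $h(\phi)+h(2\pi-\phi)=0$ and links the two endpoints), and tracking which pointwise values of $g$ or its antiderivative appear in the formula for $(A,B)$ so that the bounding functionals are genuinely continuous on $\mathcal{O} \subset H^1$ (in particular, that $h_{\phi\phi}(0) = g(0)$ makes sense, which needs $g \in H^1 \hookrightarrow C^0$ — and indeed this is why $\mathcal{O}$ was defined as a subspace of $H^1$ rather than $L^2$). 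Everything else is routine one-dimensional ODE theory.
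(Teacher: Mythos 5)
Your argument is essentially the paper's: both reduce \eqref{e:ventsel} to a one-dimensional affine problem by writing $h$ as a double antiderivative of $g$ plus a linear correction, pin down the free constants via oddness and the Ventsel condition, observe that the crux is the non-vanishing of $\pi^2/8 - 1$, and get compactness from a higher-order Sobolev bound together with Rellich. The only cosmetic difference is that the paper centres the double antiderivative at $\pi$ (so that it is automatically odd about $\pi$, leaving a single free constant $h_\phi(\pi)$) and records the sharper estimate $\|\mathscr{A}(g)\|_{H^3} \leq C\|g\|_{H^1}$, whereas you centre at $0$ with a $2\times 2$ system and settle for an $H^2$ bound; both versions are correct.
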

\begin{proof}
$h\in \mathcal{O}$ solves the first equation in \eqref{e:ventsel} if and only if
\begin{equation}\label{e:sol_esplicita}
h (\phi) = h_\phi (\pi) (\phi -\pi) + \underbrace{\int_\pi^\phi \int_\pi^\tau g (s)\, ds\, d\tau}_{=: G (\phi)}\, .
\end{equation}
On the other hand the initial condition holds if and only if 
\[
h_\phi(\pi) \left(\frac{\pi^2}{8}-1\right) =  G' (0) + \frac{\pi}{2} \left(\frac{G(0)}{4} + G'' (0)\right)\, .
\]
Since $G$ is determined by $g$, the latter determines uniquely $h_\phi(\pi)$ and thus shows that there is one
and only one solution $h = \mathscr{A} (g) \in \mathcal{O}$ of \eqref{e:ventsel}. Moreover, we obviously have
\[
\|\mathscr{A} (g)\|_{H^3} \leq C \|g\|_{H^1}\, ,
\]
which shows that the operator is compact.
\end{proof}

We next introduce in $\mathcal{O}$ a continuous bilinear map
\begin{equation}\label{e:quasi_prodotto}
\langle u, v \rangle := \int_0^{2\pi} u_\phi v_\phi - \frac{1}{4} \int_0^{2\pi} uv\, .
\end{equation}
If $\langle \cdot, \cdot \rangle$ were a scalar product on $\mathcal{O}$, $\mathscr{A}$ would be a 
self-adjoint operator on $\mathcal{O}$ with respect to it and we would conclude that there is an 
orthonormal base made by eigenfunctions of $\mathscr{A}$. Unfortunately $\langle \cdot, \cdot \rangle$ is 
{\em not} positive definite. This causes some technical complications.

\begin{lemma}\label{l:autoaggiunto} 
The bilinear map $\langle \cdot, \cdot\rangle$ satisfies the following properties:
\begin{itemize}
\item[(a)] $\langle v, v \rangle \geq 0$ for every $v\in \mathcal{O}$;
\item[(b)] $\langle v, v\rangle = 0$ if and only if $v (\phi) = \mu \cos \frac{\phi}{2}$ for some constant $\mu$;
\item[(c)] $\langle v, \cos \frac{\phi}{2}\rangle = 0$ for every $v\in \mathcal{O}$;
\item[(d)] $\langle \mathscr{A} (v), w\rangle = \langle v, \mathscr{A} (w)\rangle$ for every $v,w\in \mathcal{O}$.
\end{itemize}
\end{lemma}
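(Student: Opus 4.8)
\textbf{Proof plan for Lemma \ref{l:autoaggiunto}.}

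The plan is to treat items (a)–(c) via an explicit decomposition of $\mathcal{O}$ into Fourier-type modes, and item (d) via a double integration by parts that exploits the Ventsel boundary condition.

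For (a)–(c), I would first observe that a function $v\in\mathcal{O}$, being odd under $\phi\mapsto 2\pi-\phi$, admits an expansion in the half-integer cosines $\{\cos\frac{(2k+1)\phi}{2}\}_{k\ge0}$: indeed these functions form an orthogonal basis of $\{g\in L^2(]0,2\pi[):g(\phi)=-g(2\pi-\phi)\}$ (they satisfy the symmetry, are mutually orthogonal on $]0,2\pi[$, and span, since the substitution $\psi=\phi/2$ turns the odd-under-$2\pi-\phi$ condition into an odd-under-$\pi-\psi$ condition on $]0,\pi[$, for which $\{\cos(2k+1)\psi\}$ is the standard basis). Writing $v=\sum_k c_k\cos\frac{(2k+1)\phi}{2}$, Parseval gives $\int_0^{2\pi}v^2=\pi\sum_k c_k^2$ and $\int_0^{2\pi}v_\phi^2=\pi\sum_k\big(\frac{2k+1}{2}\big)^2 c_k^2$, so that
\begin{equation*}
\langle v,v\rangle=\pi\sum_{k\ge0}\left(\Big(\tfrac{2k+1}{2}\Big)^2-\tfrac14\right)c_k^2=\pi\sum_{k\ge0}\big(k^2+k\big)c_k^2\ge0,
\end{equation*}
which is (a). The quantity $k^2+k$ vanishes exactly for $k=0$, so $\langle v,v\rangle=0$ forces $c_k=0$ for all $k\ge1$, i.e. $v=c_0\cos\frac{\phi}{2}$, giving (b). Finally (c) follows because $\cos\frac{\phi}{2}$ is the $k=0$ mode, which is $\langle\cdot,\cdot\rangle$-orthogonal to every mode (including itself) by the computation above; alternatively one checks directly that $(\cos\frac{\phi}{2})_{\phi\phi}=-\frac14\cos\frac{\phi}{2}$, so integrating by parts $\langle v,\cos\frac{\phi}{2}\rangle=-\int_0^{2\pi}v\,(\cos\frac{\phi}{2})_{\phi\phi}-\frac14\int_0^{2\pi}v\cos\frac{\phi}{2}+[\text{bdry}]$, and the boundary terms at $0$ and $2\pi$ cancel by the odd symmetry of $v$ together with the explicit values of $\cos\frac{\phi}{2}$ and its derivative.

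For (d), set $h=\mathscr{A}(v)$, $\tilde h=\mathscr{A}(w)$. Using $h_{\phi\phi}=v$, $\tilde h_{\phi\phi}=w$, I integrate by parts twice:
\begin{equation*}
\langle h,w\rangle=\int_0^{2\pi}h_\phi\tilde h_{\phi\phi\phi}\,d\phi-\tfrac14\int_0^{2\pi}h\tilde h_{\phi\phi}\,d\phi+\Big[h_\phi\tilde h_{\phi\phi}\Big]_0^{2\pi}-\dots
\end{equation*}
— more cleanly, I would show directly that both $\langle \mathscr A(v),w\rangle$ and $\langle v,\mathscr A(w)\rangle$ equal the symmetric expression $\int_0^{2\pi}v\,\tilde h\,d\phi+(\text{symmetric boundary term})$, or equivalently $-\int_0^{2\pi}h_\phi\tilde h_\phi\,d\phi-\frac14\int_0^{2\pi}(h\tilde h_{\phi\phi}+\tilde h h_{\phi\phi})\,d\phi$ after symmetrizing, and then verify that the leftover boundary contributions at $\phi=0$ and $\phi=2\pi$ cancel. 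The boundary terms at $\phi=2\pi$ relate to those at $\phi=0$ by the odd symmetry of $h,\tilde h$ (and the induced parities of their derivatives, cf. Remark \ref{r:even-odd-split}), and the terms at $\phi=0$ are precisely the combination $h_\phi(0)+\frac{\pi}{2}\big(\frac{h(0)}{4}+h_{\phi\phi}(0)\big)$ paired against $\tilde h$ — which vanishes by the Ventsel boundary condition in \eqref{e:ventsel}. Tracking the factor $\frac{\pi}{2}$ carefully is exactly what makes the boundary condition the ``right'' one for self-adjointness; this bookkeeping of boundary terms is the only place requiring care, and is the main (mild) obstacle. Everything else is routine.
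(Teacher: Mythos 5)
Your plan is essentially the paper's proof: for (a)--(c) the paper invokes the same Fourier expansion in $\cos\frac{k\phi}{2}$ (you just spell out the Parseval computation and the parity restriction to odd $k$ explicitly, which the paper leaves implicit), and for (c) the paper uses your second, integration-by-parts alternative; for (d) the paper does the same double integration by parts, with the boundary terms killed by the Ventsel condition together with the odd symmetry. One small inaccuracy in your sketch of (d): the boundary contribution at $\phi=0$ is not a single term of the form ``Ventsel combination paired against $\tilde h$'', but rather the antisymmetrized expression $h_\phi(0)\bigl(\tilde h_{\phi\phi}(0)+\tfrac14\tilde h(0)\bigr)-\tilde h_\phi(0)\bigl(h_{\phi\phi}(0)+\tfrac14 h(0)\bigr)$, which vanishes because the Ventsel condition turns each summand into $-\tfrac{2}{\pi}h_\phi(0)\tilde h_\phi(0)$; the endpoint $\phi=2\pi$ is then handled by the induced parities of $h$, $h_\phi$, $h_{\phi\phi}$.
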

\begin{proof} {\bf (a) \& (b)}
First observe that (a) is equivalent to
\begin{equation}\label{e:sharp}
\frac{1}{4} \int_0^{2\pi} v^2 \leq \int_0^{2\pi} v_\phi^2\, .
\end{equation}
If we write $v$ using the Fourier series expansion
$v (\phi) = \sum_{k=1}^\infty \alpha_k \cos \frac{k\phi}{2}$,
the inequality becomes obvious and
it is also clear that equality holds if and only if $\alpha_k =0$ for every $k\geq 2$. 

\medskip

{\bf (c)} Let $z (\phi) := \cos \frac{\phi}{2}$ and observe that $\frac{z}{4} + z_{\phi\phi}=0$ and that
$z_\phi (0)= z_\phi (2\pi) = 0$. We therefore compute
\begin{align*}
\langle w,z\rangle = & \int_0^{2\pi} w_\phi z_\phi - \frac{1}{4} \int_0^{2\pi} zw =  w z_\phi \Big|_0^{2\pi} - \int_0^{2\pi} w \left(z_{\phi\phi} + \frac{z}{4}\right) = 0\, .
\end{align*}

\medskip

{\bf (d)} Consider $z = \mathscr{A} (v)$ and $u = \mathscr{A} (w)$. We then compute
\begin{align*}
\langle \mathscr{A} (v), w \rangle = & \langle z, u_{\phi\phi}\rangle = \int_0^{2\pi} z_\phi u_{\phi\phi\phi} - \frac{1}{4} \int_0^{2\pi} z u_{\phi\phi}\\
=& z_\phi u_{\phi\phi} \Big|_0^{2\pi} - \int_0^{2\pi} z_{\phi\phi} u_{\phi\phi} - \frac{1}{4} z u_\phi \Big|_0^{2\pi} +
\frac{1}{4} \int_0^{2\pi} z_\phi u_\phi\\
=& z_\phi u_{\phi\phi} \Big|_0^{2\pi} - z_{\phi\phi} u_{\phi}\Big|_0^{2\pi} + \int_0^{2\pi} z_{\phi\phi\phi} u_\phi  - \frac{1}{4} z u_\phi \Big|_0^{2\pi}
+  \frac{1}{4} z_\phi u \Big|_0^{2\pi}- \frac{1}{4} \int_0^{2\pi} z_{\phi\phi} u\\
= & \left. z_\phi \left(u_{\phi\phi} + \frac{u}{4}\right)\right|_0^{2\pi} - \left. u_\phi \left(z_{\phi\phi} + \frac{z}{4}\right)\right|_0^{2\pi} + \langle z_{\phi\phi} , u\rangle\\
= & - \frac{2}{\pi} z_\phi u_\phi\Big|_0^{2\pi} + \frac{2}{\pi} z_\phi u_\phi \Big|_0^{2\pi} + \langle v, \mathscr{A} (w)\rangle =
\langle v , \mathscr{A} (w)\rangle\, . \qedhere
\end{align*}
\end{proof}

\subsection{Spectral decomposition}\label{s:spectral} We are now ready to prove the following spectral analysis. First of all we start with the following

\begin{lemma}\label{l:autovalori} If $\mu$ is a real number and $h\in \mathcal{O}$ a solution of the following eigenvalue problem
\begin{equation}\label{e:autovalore}
\left\{
\begin{array}{l}
h_{\phi\phi} = \mu h\\ \\
h_\phi (0) = - \frac{\pi}{2} \left(\frac{h (0)}{4}+ h_{\phi\phi} (0) \right)
\end{array}\right.
\end{equation}
then
\begin{itemize}
\item[(a)] $\mu < 0$ and if we set $\mu = -\nu^2$ for $\nu>0$, then $\nu$ is a positive solution of
\begin{equation}\label{e:zeros}
\nu \cos \nu \pi = \frac{\pi}{2} \left(\frac{1}{4} - \nu^2\right) \sin \nu \pi\, .
\end{equation}
\item[(b)] $h$ is a constant multiple of $\sin (\nu (\phi -\pi))$. 
\item[(c)] The positive solutions of \eqref{e:zeros} are given by an increasing sequence $\{\nu_k\}_k\in \mathbb N$ in which $\nu_1 = \frac{1}{2}$, $\nu_2>\frac{3}{2}$ and 
\begin{equation}\label{e:asintotico}
\lim_{k\to \infty} \frac{\nu_k}{k} = 1\, 
\end{equation}
\end{itemize}
\end{lemma}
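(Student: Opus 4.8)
The plan is to solve the ODE $h_{\phi\phi}=\mu h$ on $]0,2\pi[$ explicitly, impose the oddness condition $h(\phi)=-h(2\pi-\phi)$, and then feed the result into the Ventsel boundary condition at $\phi=0$. Oddness about $\phi=\pi$ means that $h$, written in the shifted variable $\psi:=\phi-\pi$, must be an odd function on $]-\pi,\pi[$. For $\mu=\nu^2>0$ the general odd solution is $h=c\sinh(\nu(\phi-\pi))$, for $\mu=0$ it is $h=c(\phi-\pi)$, and for $\mu=-\nu^2<0$ it is $h=c\sin(\nu(\phi-\pi))$; this already gives (b). So the only thing to check for (a) is that $\mu\geq 0$ is impossible. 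I would do this by plugging each candidate into the boundary condition and showing the resulting scalar equation has no solution, or alternatively — cleaner — by using Lemma \ref{l:autoaggiunto}(a): pairing the eigenvalue equation against $h$ with $\langle\cdot,\cdot\rangle$ gives, after the same integration by parts used in the proof of (d), that $\langle h,h\rangle = -\mu\int_0^{2\pi}h^2 + (\text{boundary terms that vanish by the Ventsel condition})$, so $\mu\geq 0$ would force $\langle h,h\rangle\leq 0$, hence $\langle h,h\rangle=0$, hence by (b) of that lemma $h$ is a multiple of $\cos\frac\phi2$, which is \emph{not} odd about $\pi$ unless $h\equiv 0$; so $\mu<0$.

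Having reduced to $\mu=-\nu^2$ and $h=\sin(\nu(\phi-\pi))$, I compute $h(0)=\sin(-\nu\pi)=-\sin\nu\pi$, $h_\phi(0)=\nu\cos(-\nu\pi)=\nu\cos\nu\pi$, and $h_{\phi\phi}(0)=\mu h(0)=\nu^2\sin\nu\pi$. Substituting into $h_\phi(0)=-\frac\pi2(\frac{h(0)}4+h_{\phi\phi}(0))$ gives
\[
\nu\cos\nu\pi = -\frac\pi2\Big(-\frac{\sin\nu\pi}{4}+\nu^2\sin\nu\pi\Big)=\frac\pi2\Big(\frac14-\nu^2\Big)\sin\nu\pi,
\]
which is exactly \eqref{e:zeros}; this proves (a). Note $\nu=\frac12$ is visibly a root since then both sides vanish ($\cos\frac\pi2=0$ and $\frac14-\nu^2=0$), and the corresponding eigenfunction $\sin(\frac12(\phi-\pi))=-\cos\frac\phi2$ is consistent with the degenerate null direction of $\langle\cdot,\cdot\rangle$.

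For (c) I would analyze the transcendental equation $F(\nu):=\nu\cos\nu\pi - \frac\pi2(\frac14-\nu^2)\sin\nu\pi=0$. Dividing by $\cos\nu\pi$ (away from its zeros) turns it into $\tan\nu\pi = \frac{2\nu}{\pi(\frac14-\nu^2)}$; the right-hand side is a fixed rational function with a pole at $\nu=\frac12$, while $\tan\nu\pi$ has period $1$ and runs over all of $\mathbb{R}$ on each interval $(k-\frac12,k+\frac12)$. A standard monotonicity/intermediate-value argument on each such interval (tracking that the rational side is bounded and eventually monotone while $\tan\nu\pi$ sweeps from $-\infty$ to $+\infty$) yields exactly one root $\nu_k$ in a neighborhood of each positive integer, with $\nu_k=k+o(1)$, giving \eqref{e:asintotico}; and one checks separately that there is no root in $(\frac12,\frac32)$ other than possibly near $\nu=1$ — more precisely that the smallest root is $\nu_1=\frac12$ and the next, $\nu_2$, lies beyond $\frac32$ (evaluate $F$ or the sign of $\tan\nu\pi - \frac{2\nu}{\pi(1/4-\nu^2)}$ at $\nu=\frac32$ and just above). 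The main obstacle is purely bookkeeping: being careful near the poles/zeros of $\tan$ and the rational function so that one does not miss or double-count roots, and pinning down the precise location of the first two roots ($\nu_1=\tfrac12$, $\nu_2>\tfrac32$) rather than just the asymptotics. Everything else is routine ODE theory plus the already-established algebraic properties of $\langle\cdot,\cdot\rangle$.
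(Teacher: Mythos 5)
Your reduction to the three candidate odd solutions ($c\sinh\nu(\phi-\pi)$, $c(\phi-\pi)$, $c\sin\nu(\phi-\pi)$) and the subsequent derivation of \eqref{e:zeros} by plugging $\sin\nu(\phi-\pi)$ into the Ventsel condition is exactly the paper's route and is correct. However, two points deserve attention.

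First, the ``cleaner'' alternative you propose for excluding $\mu\geq 0$ does not work as stated. Integrating by parts with $h_{\phi\phi}=\mu h$ and using that for odd $h$ one has $h(2\pi)=-h(0)$, $h_\phi(2\pi)=h_\phi(0)$, gives
\[
\langle h,h\rangle \;=\; \bigl[hh_\phi\bigr]_0^{2\pi} - \Bigl(\mu+\tfrac14\Bigr)\int_0^{2\pi}h^2
\;=\; -2h(0)h_\phi(0)-\Bigl(\mu+\tfrac14\Bigr)\int_0^{2\pi}h^2\,.
\]
Substituting the Ventsel condition $h_\phi(0)=-\tfrac\pi2\bigl(\tfrac14+\mu\bigr)h(0)$ does \emph{not} kill the boundary term; it turns it into $\pi\bigl(\tfrac14+\mu\bigr)h(0)^2$, so that
\[
\langle h,h\rangle = \Bigl(\mu+\tfrac14\Bigr)\Bigl(\pi\,h(0)^2-\int_0^{2\pi}h^2\Bigr)\,,
\]
whose sign is not controlled by $\mu$ alone. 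Thus the appeal to Lemma~\ref{l:autoaggiunto}(a) does not force $\mu<0$, and one is thrown back on the direct computation. For $\mu=0$ that is trivial, but for $\mu=\nu^2>0$ showing that $\nu\cosh\nu\pi=\tfrac\pi2\bigl(\tfrac14+\nu^2\bigr)\sinh\nu\pi$ has no positive solution is genuinely nontrivial: the paper passes to $\Phi(x)=e^x(\pi^2+x^2-4x)-\pi^2-x^2-4x$ with $x=2\pi\nu$ and shows $\Phi(0)=0$, $\Phi'(0)=\pi^2-8>0$, $\Phi''>0$ (convexity), hence no positive zero. This step is entirely missing from your sketch.

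Second, for (c) your $\tan$-rewriting is a viable strategy, but it is only an outline; you yourself flag the bookkeeping near the poles of $\tan$ and of $2\nu/\bigl(\pi(1/4-\nu^2)\bigr)$ as an obstacle. The paper sidesteps this by working with $\Psi(x)=8x\cos x-(\pi^2-4x^2)\sin x$, whose derivative factors cleanly as $\Psi'(x)=(4x^2+8-\pi^2)\cos x$, making it easy to count zeros interval by interval and in particular to pin down $\nu_1=\tfrac12$ and $\nu_2>\tfrac32$. So in sum: (b) and the derivation of \eqref{e:zeros} are fine, but (a) has a real gap in the $\mu>0$ exclusion (and your proposed shortcut is based on a wrong cancellation), and (c) is not carried out.
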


We will postpone the proof of the lemma and introduce instead the following notation. For $k=1$ we set $\zeta_1 (\phi) = \cos \frac{\phi}{2}$, while for $k>1$ we let $\zeta_k := c_k 
\sin(\nu_k(\phi-\pi))$, where $c_k$ is chosen so that $\langle \zeta_k, \zeta_k\rangle = 1$.  Furthermore we set $\zeta_0 (\phi) := (\phi-\pi) \sin \frac{\phi}{2}$, the relevance of the latter function is that it solves 
\begin{equation}\label{e:blocchetto}
\left\{
\begin{array}{l}
\zeta_{\phi\phi} = - \frac{\zeta}{4} + \zeta_1\\ \\
\zeta_\phi (0) = - \frac{\pi}{2} \left(\frac{\zeta (0)}{4} + \zeta_{\phi\phi} (0)\right)\, .
\end{array}\right.
\end{equation}
In particular if we restrict the second derivative operator on the $2$-dimensional vector space generated by $\zeta_1$ and $\zeta_0$, its matrix representation is given by
\[
\left(
\begin{array}{ll}
-4 & 0\\
1 & -4\, .
\end{array}
\right)
\]
Consequently the operator $\mathscr{A}$ is not diagonalizable in $\mathcal{O}$, which is the reason why its spectral analysis is somewhat complicated.

\begin{proposition}\label{p:spettro}
The set $\{\zeta_k \} \subset \mathcal{O}$ is an Hilbert basis for $\mathcal{O}$, namely for every $\zeta \in \mathcal{O}$ there is a unique 
choice of coefficients $\{a_k\}$ such that
\begin{equation}\label{e:espansione}
\zeta = \sum_{k=0}^\infty a_k \zeta_k \, ,
\end{equation}
where the series converges in $H^1$. The coefficients $a_k$ in \eqref{e:espansione} are determined by
\begin{equation}\label{e:coefficienti}
a _k = \langle \zeta, \zeta_k\rangle\qquad \mbox{for all $k\geq 2$,}
\end{equation}
while $a_0$ and $a_1$ are continuous linear functionals on $\mathcal{O}$. 
\end{proposition}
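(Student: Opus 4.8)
The plan is to reduce the statement to the spectral theorem for compact self-adjoint operators, applied not on $\mathcal{O}$ itself — where $\langle\cdot,\cdot\rangle$ is only \emph{semi}definite — but on the honest Hilbert space obtained by discarding the one-dimensional radical. By Lemma~\ref{l:autoaggiunto} that radical is $N:=\mathrm{span}\{\zeta_1\}$, $\zeta_1=\cos\frac\phi2$, and I would set $\mathcal{O}_0:=\{v\in\mathcal{O}:\int_0^{2\pi}v\cos\frac\phi2\,d\phi=0\}$, the $L^2$-orthogonal complement of $N$ in $\mathcal{O}$, so $\mathcal{O}=N\oplus\mathcal{O}_0$. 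The preliminary point is that on $\mathcal{O}_0$ the seminorm $\langle v,v\rangle^{1/2}$ is \emph{equivalent} to $\|v\|_{H^1}$: this is the sharp form of \eqref{e:sharp}, equivalently the fact that the self-adjoint realization of $-\partial_\phi^2-\frac14$ whose form domain is $\mathcal{O}$ (Dirichlet at $\phi=\pi$, natural at the endpoints) has lowest eigenvalue $0$, with eigenspace $N$, and strictly positive second eigenvalue, so that $\langle v,v\rangle\geq 2\|v\|_{L^2}^2$ on $\mathcal{O}_0$ and hence $\langle v,v\rangle$ controls $\|v\|_{H^1}^2$ two-sidedly. Thus $(\mathcal{O}_0,\langle\cdot,\cdot\rangle)$ is a Hilbert space, and I would work with the compressed operator $\mathscr{B}:=P_0\circ\mathscr{A}|_{\mathcal{O}_0}$, where $P_0$ is the ($H^1$-bounded) $L^2$-projection onto $\mathcal{O}_0$. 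It is compact, being $\mathscr{A}$ composed with bounded maps, and self-adjoint for $\langle\cdot,\cdot\rangle$ because $\langle\zeta_1,\cdot\rangle\equiv0$ (Lemma~\ref{l:autoaggiunto}(c)) makes the correction added by $P_0$ invisible to the form: $\langle\mathscr{B}v,w\rangle=\langle\mathscr{A}v,w\rangle=\langle v,\mathscr{A}w\rangle=\langle v,\mathscr{B}w\rangle$ for $v,w\in\mathcal{O}_0$, using Lemma~\ref{l:autoaggiunto}(d).

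Next I would pin down the spectrum of $\mathscr{B}$. Direct computation from the definition of $\mathscr{A}$ and from \eqref{e:blocchetto} gives $\mathscr{A}\zeta_1=-4\zeta_1$ and $\mathscr{A}\zeta_0=-4\zeta_0-16\zeta_1$, while $\mathscr{A}\zeta_k=-\nu_k^{-2}\zeta_k$ for $k\geq2$ by Lemma~\ref{l:autovalori} (here $\nu_k^{-2}\neq4$ since $\nu_k>\frac32$); from these one checks that $P_0\zeta_0$ and the $P_0\zeta_k$ ($k\geq2$) are eigenvectors of $\mathscr{B}$ with eigenvalues $-4$ and $-\nu_k^{-2}$ respectively. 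Conversely, if $\mathscr{B}v=\mu v$ with $v\neq0$ then $\mu\neq0$ (one checks $0\notin\sigma(\mathscr{B})$ from injectivity of $\mathscr{A}$) and $(\mathscr{A}-\mu)v\in N$; applying $(\mathscr{A}+4)$ and using $(\mathscr{A}+4)\zeta_1=0$ yields $(\mathscr{A}+4)(\mathscr{A}-\mu)v=0$. If $\mu=-4$ this says $v\in\ker(\mathscr{A}+4)^2$, which equals $\mathrm{span}\{\zeta_0,\zeta_1\}$ once one uses $(\mathscr{A}+4)\zeta_0=-16\zeta_1$; if $\mu\neq-4$ it says $v\in N\oplus\ker(\mathscr{A}-\mu)$, and a nonzero element of $\ker(\mathscr{A}-\mu)$ solves \eqref{e:autovalore} with eigenvalue $\mu^{-1}$, so Lemma~\ref{l:autovalori} and the simplicity of those eigenvalues (an elementary count of conditions for a second-order ODE) force $\mu=-\nu_k^{-2}$ and $\ker(\mathscr{A}-\mu)=\mathrm{span}\{\zeta_k\}$. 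In each case $v\in N\oplus\mathrm{span}\{\zeta_j:\nu_j^{-2}=-\mu\}$, hence, being $L^2$-orthogonal to $\zeta_1$, is a multiple of the corresponding $P_0\zeta_j$. So $\sigma(\mathscr{B})=\{-4\}\cup\{-\nu_k^{-2}\}_{k\geq2}$, with simple eigenvalues and eigenvectors $P_0\zeta_0$, $P_0\zeta_k$.

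The conclusion then follows the usual pattern. Since $\langle\zeta_j,\zeta_j\rangle>0$ for every $j\neq1$ (part of the normalization of $\zeta_k$ for $k\geq2$, and checked directly for $\zeta_0$) and $\langle P_0\zeta_j,P_0\zeta_j\rangle=\langle\zeta_j,\zeta_j\rangle$ — again by $\langle\zeta_1,\cdot\rangle\equiv0$ — the spectral theorem makes $\{P_0\zeta_j\}_{j\in\{0\}\cup\{2,3,\dots\}}$ an orthogonal basis of $\mathcal{O}_0$. For $\zeta\in\mathcal{O}$ we have $P_0\zeta\in\mathcal{O}_0$, and rewriting $\langle P_0\zeta,P_0\zeta_k\rangle=\langle\zeta,\zeta_k\rangle$ once more, Parseval gives $P_0\zeta=\sum_{j\neq1}\langle\zeta,\zeta_j\rangle\langle\zeta_j,\zeta_j\rangle^{-1}P_0\zeta_j$ with convergence in $H^1$. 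Writing $P_0\zeta_j=\zeta_j-\gamma_j\zeta_1$ with $\gamma_j=\|\zeta_1\|_{L^2}^{-2}\langle\zeta_j,\zeta_1\rangle_{L^2}$, and $\zeta=P_0\zeta+\|\zeta_1\|_{L^2}^{-2}\langle\zeta,\zeta_1\rangle_{L^2}\zeta_1$, one obtains
\[
\zeta=\sum_{j\neq1}a_j\zeta_j+a_1\zeta_1,\qquad a_j=\frac{\langle\zeta,\zeta_j\rangle}{\langle\zeta_j,\zeta_j\rangle}\ (j\neq1),\qquad a_1=\frac{\langle\zeta,\zeta_1\rangle_{L^2}}{\|\zeta_1\|_{L^2}^2}-\sum_{j\neq1}a_j\gamma_j,
\]
so in particular $a_k=\langle\zeta,\zeta_k\rangle$ for $k\geq2$, as claimed, and $a_0$ is a bounded functional. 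The separate $H^1$-convergence of $\sum a_j\zeta_j$ and the absolute convergence of the scalar series defining $a_1$ both follow from $\sum a_j^2<\infty$ (Bessel) together with $\gamma_j=O(\nu_j^{-1})=O(j^{-1})$, which comes from the asymptotics \eqref{e:asintotico} and the explicit shape of $\zeta_k$; hence $a_1$ too is a bounded linear functional. Uniqueness is immediate: applying $P_0$ to $\sum b_j\zeta_j=0$ and using that $\{P_0\zeta_j\}_{j\neq1}$ is a basis forces $b_j=0$ for $j\neq1$, and then $b_1\zeta_1=0$.

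The genuinely delicate point — and the reason, as the authors note, that $\mathscr{A}$ is not diagonalizable on $\mathcal{O}$ — is the degeneracy of $\langle\cdot,\cdot\rangle$: $\zeta_1$ spans its radical, so it is invisible to the form. This forces the two detours above: one cannot run the spectral theorem on $\mathcal{O}$ directly but must pass to $\mathcal{O}_0$ and establish the sharp coercivity there, and the coefficient $a_1$ must be recovered by a separate $L^2$-type functional after the expansion of $P_0\zeta$, with boundedness resting on $\nu_k\to\infty$. Keeping careful track of the Jordan pair $(\zeta_0,\zeta_1)$ attached to the eigenvalue $-4$ is where essentially all of the bookkeeping lies; I expect this — rather than any single estimate — to be the main obstacle.
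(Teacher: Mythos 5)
Your argument is correct, and it takes a genuinely different route from the paper's. The paper works with $Y:=\overline{\operatorname{span}\{\zeta_k\}_{k\geq 2}}$, proves directly that $\cos\frac\phi2\notin Y$ (via a somewhat delicate argument using \eqref{e:si_annulla} and the compactness of $\mathscr{A}$), then shows $Z+Y=\mathcal{O}$ where $Z=\operatorname{span}\{\zeta_0,\zeta_1\}$; the last step is the laborious one, analyzing $X^\perp$ via a constrained maximization of $\langle\mathscr{A}v,\mathscr{A}v\rangle$ and ruling out $2$- and $3$-dimensional invariant subspaces by hand. You instead mod out the radical $N=\operatorname{span}\zeta_1$ using the $L^2$-orthogonal projection $P_0$, observe that $\langle\cdot,\cdot\rangle$ is genuinely coercive on $\mathcal{O}_0$ (your two-sided estimate is correct: on $\mathcal{O}_0$ the Fourier coefficients $\alpha_1$ vanish and $\frac{k^2-1}{4}\geq\frac12(1+\frac{k^2}{4})$ for $k\geq 3$), and then run the spectral theorem for the compressed operator $\mathscr{B}=P_0\mathscr{A}|_{\mathcal{O}_0}$, whose spectrum you identify completely — including the eigenvalue $-4$ with eigenvector $P_0\zeta_0$ via $\ker(\mathscr{A}+4)^2=\operatorname{span}\{\zeta_0,\zeta_1\}$, which you establish from $(\mathscr{A}+4)\zeta_0=-16\zeta_1$. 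Completeness then follows from $\ker\mathscr{B}=\{0\}$ (correctly reduced to injectivity of $\mathscr{A}$) rather than from the paper's ad hoc analysis of $X^\perp$. What your approach buys is a cleaner completeness argument: the spectral theorem does the work once the spectrum is pinned down, whereas the paper must manufacture the orthogonal complement structure by hand because $Y$ is not defined as any kind of orthogonal complement. The small price is the bookkeeping when you translate back from the $\{P_0\zeta_j\}$ expansion to the $\{\zeta_j\}$ expansion, which costs you the convergence estimate $\gamma_j=O(\nu_j^{-1})$ (in fact $O(\nu_j^{-2})$, since the normalization forces $c_j\sim\nu_j^{-1}$, which only helps); you handle this correctly via Cauchy--Schwarz. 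A minor presentational point: your coercivity claim $\langle v,v\rangle\geq 2\|v\|_{L^2}^2$ and the Jordan-block computation $\mathscr{A}\zeta_0=-4\zeta_0-16\zeta_1$ deserve a one-line verification each in the final write-up, since they are load-bearing; both check out.
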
 

\begin{proof}[Proof of Lemma \ref{l:autovalori}]
First of all, consider $\mu =0$. An odd solution of \eqref{e:autovalore} must then take necessarily the form $c (\phi - \pi)$ and the boundary condition would imply $c=0$. If $\mu >0$ observe that a nontrivial function $\zeta\in \mathcal{O}$ solving \eqref{e:autovalore} would also satisfy $\mathscr{A} (\zeta) = \frac{\zeta}{\mu}$. If $ \mu = \nu^2 >0$ for $\nu>0$, then $h(\phi ) = c (e^{\nu (\phi-\pi)} - e^{-\nu (\phi-\pi)})$ for some constant $c$. If $c\neq 0$ the boundary condition becomes
\begin{equation}
\nu \left(e^{-\nu \pi} + e^{\nu \pi}\right) = - \frac{\pi}{2} \left(\frac{1}{4} + \nu^2\right) \left(e^{-\nu \pi} - e^{\nu \pi}\right)\, .
\end{equation}
The latter identity is equivalent to
\begin{equation}
e^{2\pi \nu} ({\pi + 4\pi \nu^2 - 8 \nu})= {\pi + 4\pi \nu^2 + 8\nu}\, .
\end{equation}
If we make the substitution $x= 2\pi \nu$, we then are looking for zeros of the function
\[
\Phi (x) = e^x (\pi^2 + x^2 - 4x) - \pi^2 - x^2 - 4x = 0\, .
\]
The derivative is given by 
\[
\Phi' (x) = e^x (x^2 -2x + \pi^2 -4) - 2 (2+x)\, ,
\]
the second derivative by
\[
\Phi'' (x) = e^x (x^2 + \pi^2 -6) -2 \geq 3 e^x -2 > 0\, .
\]
In particular $\Phi$ is convex and $\Phi' (0) = \pi^2 - 8 > 0$. Thus $\Phi$ is strictly increasing and, since $\Phi (0) = 0$, it cannot have positive zeros.

Consider now $\mu = - \nu^2$ for $\nu>0$. A solution of the PDE in \eqref{e:autovalore} must then be a linear combination of $\sin \nu (\phi-\pi)$ and 
$\cos \nu (\phi-\pi)$: the requirement that $h\in \mathcal{O}$ excludes the multiples of $\cos \nu (\phi-\pi)$ in the linear combination. 

For $h (\phi) = \sin \nu (\phi-\pi)$ the 
boundary condition becomes 
\begin{equation}\label{e:autovalori}
\nu \cos (-\nu \pi) = - \frac{\pi}{2} \left(\frac{1}{4} - \nu^2\right) \sin (-\nu \pi)\, ,
\end{equation}
which is equivalent to \eqref{e:zeros}. If we introduce the unknown $x = \pi \nu$, then the equation becomes
\[
\Psi (x) := 8 x \cos x - \left(\pi^2-4 x^2\right) \sin x = 0\, .
\]
Since $\Psi' (x) = (4x^2 + 8 - \pi^2) \cos x$,
$\Psi'$ has a single zero in the open interval $]0, \frac{\pi}{2}[$. Since $\Psi (0) = \Psi (\frac{\pi}{2})=0$, 
we infer that there is no zero of $\Psi$ in the open interval $]0, \frac{\pi}{2}[$, i.e. any positive $\nu$ 
satisfying \eqref{e:autovalori} cannot be smaller than $\frac{1}{2}$. Moreover, as $\Psi'$ is strictly negative on 
$]\frac\pi 2,\frac32\pi[$ and $\Psi(\frac32\pi)<0<\Psi(2\pi)$, the next solution $\nu$ lies in 
$]\frac 32,2[$. 

Next, there is a unique solution $\nu_k\in]k-1,k[$, for every $k\geq 3$.
Indeed, $\Psi((k-1)\,\pi)\cdot\Psi(k\,\pi)<0$ and $\Psi'$ has a single zero in the open interval $](k-1)\,\pi, k\pi[$.
Therefore $(\nu_k)_k$ satisfies \eqref{e:asintotico}.
\end{proof}

\begin{proof}[Proof of Proposition \ref{p:spettro}]
Let $Y$ be the closure in $H^1$ of the vector space $V$ generated by $\{\zeta_k\}_{k\geq 2}$. First of all observe that, for some constant $C$ independent of $k$, 
\begin{equation}\label{e:comparable_1}
1 = \langle \zeta_k, \zeta_k \rangle \geq C^{-1} \|\zeta_k\|_{H^1}^2\qquad \forall k\geq 2\, .
\end{equation}
Indeed set $g_k:= \sin \nu_k (\phi-\pi)$: \eqref{e:comparable_1} is then equivalent to say that the $g_k$'s satisfy the same inequality. An explicit computation shows that this is equivalent to
\begin{align*}
& \int_0^{2\pi} \cos^2 \nu_k (\phi - \pi)\, d\phi  - \frac{1}{4 \nu_k^2} \int_0^{2\pi} \sin^2 \nu_k (\phi-\pi)\, d\phi\\
\geq &\; C^{-1} \left(\int_0^{2\pi} \cos^2 \nu_k (\phi-\pi)\, d\phi +  \frac{1}{\nu_k^2} \int_0^{2\pi} \sin^2 \nu_k (\phi-\pi)\, d\phi\right)\, .
\end{align*}
For each fixed $\nu_k$ the fact that the inequality holds for a sufficiently large constant is an easy consequence of the fact that $\int \cos^2 \nu_k (\phi-\pi)$ is positive while 
$\int \sin^2 \nu_k (\phi-\pi)$ is finite. On the other hand by \eqref{e:asintotico} both integrals converge to $\pi$ as $k\uparrow \infty$ and thus for a sufficiently large $k$ the inequality holds for $C\geq 2$. Now, for $k\neq j$ we have 
\[
\langle \zeta_k, \zeta_j\rangle = - \nu_k^2 \langle \mathscr{A} (\zeta_k), \zeta_j\rangle = - \nu_k^2 \langle \zeta_k, \mathscr{A} (\zeta_j)\rangle 
= \frac{\nu_k^2}{\nu_j^2} \langle \zeta_k, \zeta_j\rangle
\]
implying that $\langle \zeta_k, \zeta_j \rangle =0$.

We next claim that $\zeta_1 (\phi) = \cos \frac{\phi}{2}\not\in Y$. Otherwise there is a sequence $\{v_n\}\subset V$ such that $v_n\to \zeta_1$ strongly in $H^1$. $v_n$ takes therefore the form $v_n = \sum_{k=2}^{N (n)} a_{n,k} \zeta_k$. Using that $\langle v_n, v_n \rangle$ converges to $\langle \zeta_1, \zeta_1\rangle = 0$. Thus we have
\begin{equation}\label{e:si_annulla}
\lim_{n\to \infty} \sum_{k=2}^{N(n)} a_{n, k}^2 = 0\, .
\end{equation}
Now, given that the operator $\mathscr{A}$ is compact we also have that $z_n := \frac{\mathscr{A} (v_n)}{4}$ converges strongly in $H^1$ to 
$\frac{\mathscr{A} (\zeta_1)}{4} = - \cos \frac{\phi}{2}$. On the other hand
\[
z_n = - \sum_{k=2}^{N(n)} \frac{1}{4\nu_k^2} a_{n, k} \zeta_k \, .
\]
We then would have by item (c) of Lemma \ref{l:autovalori} and \eqref{e:comparable_1}
\begin{align*}
0 < & \|\zeta_1\|^2_{H^1} = \lim_{n\to \infty} \|z_n\|_{H^1}^2 \leq \lim_{n\to \infty} \sum_{k, j=2}^{N(n)} \frac{|a_{n,j}| 
|a_{n,k}|}{16\nu_j^2 \nu_k^2} \|\zeta_k\|_{H^1}\|\zeta_j\|_{H^1}\\
\leq & C \limsup_{n\to\infty} \left(\sum_{k=2}^{N(n)} \frac{|a_{n,j}|}{j^2}\right)^2
\leq C \limsup_{n\to\infty} \sum_{k=2}^{N(n)} \frac{1}{k^4} \sum_{j=2}^{N(n)} a_{n,j}^2
\leq  C \limsup_{n\to\infty}\sum_{j=2}^{N(n)} a_{n,j}^2\stackrel{\eqref{e:si_annulla}}{=} 0\, ,
\end{align*}
Consider now the standard $H^1$ scalar product $(\cdot, \cdot)$ on $\mathcal{O}$ and for every $\zeta\in Y$ let 
$\zeta = \zeta^\perp + \zeta^\parallel$ be the decomposition of $\zeta$ into a multiple of $\zeta_1$ and an element $\zeta^\perp$ orthogonal in the scalar product $(\cdot, \cdot)$ to $\zeta_1$. Since $\zeta_1\not \in Y$ and $Y$ is closed in $H^1$, there is a constant $\alpha >0$ such that $\|\zeta^\perp\|^2_{H^1} \geq \alpha \|\zeta\|^2_{H^1}$. On the other hand using the Fourier expansion of $\zeta$ we easily see that $\langle \zeta, \zeta\rangle = \langle \zeta^\perp, \zeta^\perp\rangle \geq C^{-1} \|\zeta^\perp\|^2_{H^1}$ for some universal constant $C>0$. In particular $\mathscr{A}$ is a compact self-adjoint operator on $Y$, which implies that $\{\zeta_k\}_{k\geq 2}$ is an orthonormal basis on the Hilbert space $Y$ (endowed with the scalar product $\langle \cdot, \cdot \rangle$).

Consider now the $2$-dimensional vector space 
$Z := \{a_0 \zeta_0 +a_1 \zeta_1 : a_i \in \mathbb R\}$. If $a_0 \zeta_0 + a_1\zeta _1 = z\in Z \cap Y$, using Lemma \ref{l:autoaggiunto} and the fact that $\langle y, \zeta_1\rangle =0$ for every $y\in Y$, we can compute
\[
\langle z, \zeta_j\rangle = a_0 \langle \zeta_0, \zeta_j\rangle = - \nu_j^2 \langle a_0 \zeta_0, \mathscr{A} (\zeta_j)\rangle = -\nu_j^2 \langle a_0 \mathscr{A} (\zeta_0), \zeta_j\rangle
= 4\nu_j^2 \langle a_0 \zeta_0, \zeta_j\rangle = 4 \nu_j^2 \langle z, \zeta_j\rangle\, 
\] 
for every $j\geq 2$. Since $\nu_j> \frac{3}{2}$ we infer that $\langle z, \zeta_j\rangle =0$, i.e. that $z=0$, since $\{\zeta_j\}_{j\geq 2}$ is an orthonormal Hilbert basis of $Y$ with respect to the scalar product $\langle \cdot, \cdot\rangle$. We have thus concluded that $Z\cap Y = \{0\}$. The proof of the proposition will be completed once we show that $Z+Y = \mathcal{O}$. Consider an element $\zeta\in \mathcal{O}$ and define 
\[
\bar\zeta := \frac{\langle \zeta_0, \zeta\rangle}{\langle \zeta_0, \zeta_0\rangle} \zeta_0 + \sum_{j\geq 2} \langle \zeta_j, \zeta\rangle \zeta_j\, .
\]
It turns out that $\bar\zeta \in Z+Y$ and that $\hat{\zeta} := \zeta - \bar\zeta$ satisfies the condition $\langle \hat\zeta, z\rangle = 0$ for every element $z\in Z+Y=:X$. We claim that the latter condition implies that $\hat\zeta$ is a constant multiple of $\cos \frac{\phi}{2}$. Indeed set $X^\perp := \{v : \langle v, w\rangle = 0 \quad \forall w\in X\}$. Then clearly $\mathscr{A} (X^\perp) \subset X^\perp$. Moreover $\mathscr{A}$ on $X^\perp$ has only one eigenvalue, namely $-4$. Consider now $X^\perp \ni v \mapsto Q (v,v) = \langle \mathscr{A} (v), \mathscr{A} (v)\rangle = \langle \mathscr{A}^2 (v), v \rangle$ and set
\begin{equation}\label{e:massimo}
m:= \sup \{Q (v,v): v\in  X^\perp \quad \mbox{and}\quad \langle v, v\rangle =1\}\, ,
\end{equation}
where at the moment $m$ is allowed to be $\infty$ as well. If $m=0$ we then have that $\mathscr{A} (v)$ is a multiple of $\zeta_1$ for every $v$ and this would imply that $v$ itself is a multiple of $\zeta_1$. We therefore assume that $m$ is nonzero.
Using the fact that $Q (v, \zeta_1) = 0$ for every $v$, we can find a maximizing sequence with Fourier expansion
\[
v_k := \sum_{j\geq 1} c_{k,j} \cos \frac{2j+1}{2} \phi  
\]
for which we easily see that $\langle v_k, v_k \rangle \geq C^{-1} \|v_k\|_{H^1}^2$. We can thus extract a subsequence converging weakly to some $v$. $v$ clearly belongs to $X^\perp$ and, by the compactness of the operator $\mathscr{A}$ is actually a maximizer of \eqref{e:massimo}. The Euler-Lagrange condition implies then that $\mathscr{A}^2 (v) = m v + b \zeta_1$ for some real coefficients $b$. Consider now the vector space $W$ generated by $\zeta_1, v$ and $\mathscr{A} (v)$. $W$ is then either $2$-dimensional or $3$-dimensional and $\mathscr{A}$ maps it onto itself. If $W$ were three-dimensional, then the matrix representation of $\mathscr{A}|_W$ in the basis  $\zeta_1, v$ and $\mathscr{A} (v)$ would be
\[
\left(
\begin{array}{lll}
-4 & 0 & 0\\
0 & 1 & 0\\
\alpha & 0 & m\\
\end{array}
\right)
\]
Since the characteristic polynomial of the latter matrix is $(x-1) (x-m)(x+4)$, $\mathscr{A}$ would have an eigenvalue different from $-4$ on $W\subset X^\perp$, which is not possible. 
On the other hand if $W$ were $2$-dimensional, then $v$ and $\cos \frac{\phi}{2}$ would be a basis and the matrix representation of $\mathscr{A}|_W$ in that basis would be
\[
\left(
\begin{array}{ll}
-4 & 0\\
\alpha & \beta
\end{array}
\right)
\]
Since $\mathscr{A}|_W$ cannot have an eigenvalue different than $-4$ this would force $\beta = -4$. We then would have $\mathscr{A} (v) = -4v + \alpha \zeta_0$. This would imply that $v$ is an odd solution of $v_{\phi\phi} +\frac{v}{4} = \alpha \cos \frac{\phi}{2}$. The general solution of the latter equation is given by $c_1 \cos \frac{\phi}{2} + c_2 \sin \frac{\phi}{2} + \alpha (\phi - \pi) \sin \frac{\phi}{2}$, for real coefficients $c_1$ and $c_2$. The fact that $v$ is odd implies $c_2=0$, namely $c_1 \zeta_1 + \alpha \zeta_0$. The fact that $v$ is not colinear with $\zeta_1$ implies that $\alpha \neq 0$, but on the other hand since $v\in X^\perp$, $\langle v, \zeta_1\rangle =0$, which implies $\alpha =0$. We have reached a contradiction: $X^\perp$ was thus the line generated by $\zeta_1$, proving that indeed $X=\mathcal{O}$.  
\end{proof}

\section{The three annuli property}\label{s:tre-anelli}

We now define a functional which will be instrumental in proving a suitable decay property for coefficients of solutions of \eqref{e:lineare} and hence of \eqref{e:SIS}. 

\begin{definition}\label{d:functionals}
Fix a constant $c_0>0$ appropriately small (whose choice will be specified later). Consider now any $\sigma<s$ real numbers and a pair of functions $(v, \lambda)$ such that
\begin{itemize}
\item[(i)] $v$ is odd, $v\in H^2 ([0,2\pi]\times [\sigma,s])$ and $v (0, t)=v(2\pi,t)=0$ for every $t$;
\item[(ii)] $\lambda\in H^2 ([\sigma,s])$.
\end{itemize}
Define $\zeta$ as in \eqref{e:zeta} and let $a_k (t)$ be the coefficients in the representation \eqref{e:representation_odd} and $\nu_k$ the numbers in Lemma \ref{l:autovalori}. We then define the functionals
\begin{align}
\mathcal{E} (v, \lambda, \sigma,s) &:= \sum_{k\geq 2} \int_\sigma^s  ({\nu_k^4 a_k (t)^2 + a''_k (t)^2})\, dt\, \label{e:def_E}\\
\mathcal{F} (v, \lambda,\sigma,s) &:= \int_\sigma^s (\dot\lambda (t)^2 + \ddot\lambda (t)^2 + a_0 (t)^2 + a_1 (t)^2 + a_0'' (t)^2 + a_1''(t)^2)\, dt\\
\mathcal{G} (v, \lambda, \sigma,s) &:= \max\{\mathcal{E} (v,\lambda, \sigma,s), c_0 \mathcal{F} (v, \lambda,\sigma,s)\}
\end{align}
\end{definition}

\begin{proposition}\label{p:tre-anelli}
There is a constant $\eta>0$ such that the following property holds for every solutions $(v,\lambda)\in H^2$ of \eqref{e:lineare} on $[0,2\pi]\times [0,3]$ with $v$ odd:
\begin{itemize}
\item[(a)] If $\mathcal{E} (v, \lambda, 1,2) \geq (1-\eta) \mathcal{E} (v, \lambda,0,1)$ then $\mathcal{E} (v, \lambda,2,3)\geq (1+\eta) \mathcal{E} (v, \lambda,1,2)$.
\end{itemize}
Furthermore there is a positive constant $c_0$ such that the following property holds for every solutions $(v,\lambda)\in H^2$ of \eqref{e:lineare} on $[0,2\pi]\times [0,3]$ with $v$ odd and which satisfies \eqref{e:condizione_extra}:
\begin{itemize}
\item[(b)] If $\mathcal{G} (v, \lambda, 1,2) \geq (1-\eta) \mathcal{G} (v, \lambda,0,1)$ then $\mathcal{G} (v, \lambda,2,3)\geq (1+\eta) \mathcal{G} (v, \lambda,1,2)$.
\end{itemize}
\end{proposition}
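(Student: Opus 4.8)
The plan is to reduce Proposition~\ref{p:tre-anelli} to the behaviour of the single coefficients $a_k(t)$ in the representation \eqref{e:representation_odd} of $\zeta$, exploiting that after the change of variables \eqref{e:zeta} the system \eqref{e:lineare} decouples into an (almost) independent family of constant--coefficient linear ODEs. For $k\ge 2$, Proposition~\ref{p:odd} gives $a_k''-a_k'=(\nu_k^2-\tfrac14)a_k$, with characteristic roots $\mu_k^\pm=\tfrac12\pm\nu_k$; since $\nu_k\ge\nu_2>\tfrac32$ by Lemma~\ref{l:autovalori}, we have $\mu_k^+>2$, $\mu_k^-<-1$, and $\mu_k^++\mu_k^-=1$. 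Writing $a_k=A_ke^{\mu_k^+t}+B_ke^{\mu_k^-t}$ and expanding, the single--mode unit--interval energies $E_k(\sigma):=\int_\sigma^{\sigma+1}(\nu_k^4a_k^2+(a_k'')^2)\,dt$ take the form $E_k(\sigma)=C_k^+e^{2\mu_k^+\sigma}+C_k^0e^{\sigma}+C_k^-e^{2\mu_k^-\sigma}$, where $C_k^\pm\ge 0$ are explicit positive multiples of $A_k^2$, resp.\ $B_k^2$ (so $C_k^+=0$ forces $C_k^0=0$, i.e.\ a pure decay mode), $C_k^0$ may have either sign, and --- since $E_k\ge 0$ on all of $\mathbb R$ and $2\mu_k^++2\mu_k^-=2$ --- evaluating at the point where $C_k^+e^{2\mu_k^+\sigma}=C_k^-e^{2\mu_k^-\sigma}$ forces $C_k^0\ge-2\sqrt{C_k^+C_k^-}$, i.e.\ the symmetric matrix with diagonal $C_k^+,C_k^-$ and off--diagonal $C_k^0/2$ is positive semidefinite. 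Crucially the three rates $2\mu_k^+>4$, $1$, $2\mu_k^-<-2$ are uniformly separated, and the middle (``resonant'') one is the only one whose unit--step ratio $e$ lies near $1$ from above.

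I would then establish a \emph{per--mode dichotomy}: there is a universal $\Lambda\in(2,e)$ such that, for every $k\ge2$ and every solution, either $E_k(2)\ge\Lambda\,E_k(1)$ or $E_k(1)\le\Lambda^{-1}E_k(0)$. A mode dominated by $e^{\mu_k^+t}$ has $E_k(2)/E_k(1)\ge e^4$; one dominated by $e^{\mu_k^-t}$ has $E_k(1)/E_k(0)\le e^{-2}$; one with $|C_k^0|$ comparable to $\sqrt{C_k^+C_k^-}$ but $C_k^\pm$ small has $E_k(\sigma+1)/E_k(\sigma)$ close to $e>\Lambda$; and the remaining ``transition'' case is handled by sandwiching $E_k$ between $(\sqrt{C_k^+}\,e^{\mu_k^+\sigma}\pm\sqrt{C_k^-}\,e^{\mu_k^-\sigma})^2$ and invoking the exponent gap. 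This step is elementary but a bit fussy, and it is where one must keep the constants independent of $k$ (using only $\nu_k\ge\nu_2$). Granting the dichotomy, part~(a) follows by summation: if $\mathcal E(v,\lambda,2,3)<(1+\eta)\mathcal E(v,\lambda,1,2)$, split the indices $k\ge2$ into $P=\{k:E_k(2)<\Lambda E_k(1)\}$ and its complement $Q$; on $Q$ one has $\sum_Q E_k(2)\ge\Lambda\sum_Q E_k(1)$, hence $\sum_Q E_k(1)\le\tfrac{1+\eta}{\Lambda}\mathcal E(v,\lambda,1,2)$, while on $P$ one has $E_k(1)\le\Lambda^{-1}E_k(0)$; adding gives $\mathcal E(v,\lambda,1,2)\le\Lambda^{-1}\mathcal E(v,\lambda,0,1)+\tfrac{1+\eta}{\Lambda}\mathcal E(v,\lambda,1,2)$, i.e.\ $\mathcal E(v,\lambda,1,2)\le(\Lambda-1-\eta)^{-1}\mathcal E(v,\lambda,0,1)<(1-\eta)\mathcal E(v,\lambda,0,1)$ once $\eta\le\eta_0(\Lambda)$, which is the contrapositive of (a).

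For part~(b) the same computation on the remaining modes gives $a_0''-a_0'=0$ (so $a_0=c_1+c_2e^t$), $a_1''-a_1'=-a_0$, and, from $\zeta(0,t)=-\lambda(t)/\sqrt{2\pi}$ together with $\zeta_0(0)=0$, $\zeta_1(0)=1$, the identity $\lambda=-\sqrt{2\pi}\big(a_1+\sum_{k\ge2}\zeta_k(0)a_k\big)$, with $\lambda(0)=0$. Thus the only solution of \eqref{e:lineare} whose $\mathcal F$ neither grows nor decays geometrically is, modulo the modes $k\ge2$, the \emph{stationary} solution $v=c_1\zeta_0=c_1(\phi-\pi)\sin\tfrac\phi2$, $\lambda=-\sqrt{2\pi}\,c_1t$ (which indeed solves \eqref{e:lineare}). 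The role of \eqref{e:condizione_extra} is precisely to annihilate it: plugging this $(v,\lambda)$ into \eqref{e:condizione_extra} and using $\cos\tfrac{3\phi}2+\cos\tfrac\phi2=2\cos\phi\cos\tfrac\phi2$, $\sin\tfrac{3\phi}2+\sin\tfrac\phi2=2\sin\phi\cos\tfrac\phi2$ together with elementary integrals, its left--hand side equals $-\tfrac{3\pi}{2}c_1$. Reading \eqref{e:condizione_extra} as an identity in $\sigma$ and decomposing it by the $\sigma$--frequencies $\{1,e^t,te^t,e^{\mu_k^\pm t}\}$ carried by a general solution, its ``$\sigma$--constant'' component therefore yields a bound of the form $|c_1|\le C\big(\|a_0-c_1\|_{L^2(I)}+\|\dot\lambda+\sqrt{2\pi}\,c_1\|_{L^2(I)}+(\sum_{k\ge2}\int_I\nu_k^2(a_k')^2)^{1/2}\big)$ on any unit interval $I\subset[0,3]$, the last term being controlled by $\mathcal E$ on a slightly enlarged interval since $|\zeta_k(0)|\lesssim\nu_k^{-1}$. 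With $c_1$ so controlled, every surviving contribution to $\mathcal F$ either grows with unit--step ratio $\ge e^{2-\epsilon}$ (the $e^t$, $te^t$ pieces of $a_0,a_1,\lambda$) or decays (the $e^{\mu_k^-t}$ contributions entering through $\lambda$), so $\mathcal F$ obeys the same dichotomy--plus--summation scheme as $\mathcal E$. One then combines the two: choosing $c_0$ small enough to absorb the $|c_1|$ error, $\mathcal G=\max\{\mathcal E,c_0\mathcal F\}$ inherits the three--interval inequality by checking the (at most four) cases according to which of $\mathcal E$, $c_0\mathcal F$ realizes the maximum on $[0,1]$ and on $[1,2]$, using in each case the inequality already proved for the relevant functional together with the trivial bounds $\mathcal G\ge\mathcal E$ and $\mathcal G\ge c_0\mathcal F$.

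The conceptual heart --- the uniform gap between the exponents $2\mu_k^\pm$ and the resonant rate $1$, and the fact that \eqref{e:condizione_extra} kills the single stationary mode --- is clean, so I expect the real work, and the main obstacle, to lie entirely in part~(b): making rigorous the decomposition of \eqref{e:condizione_extra} into $\sigma$--frequencies and tracking how the low modes couple, through $\lambda$, to the infinitely many modes $k\ge2$ (which forces slightly enlarged intervals and is the very reason one must pass to $\mathcal G=\max\{\mathcal E,c_0\mathcal F\}$ rather than argue with $\mathcal E$ and $\mathcal F$ separately), together with carrying the per--mode transition analysis through with constants uniform in $k$.
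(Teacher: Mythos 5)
Your proposal identifies the same conceptual core as the paper: the spectral gap $\nu_k\geq\nu_2>\tfrac32$ for $k\geq 2$, the resonant slowly-varying solution $v=c_1(\phi-\pi)\sin\tfrac\phi2$, $\lambda=-\sqrt{2\pi}\,c_1 t$ that the gap alone cannot rule out, and the role of \eqref{e:condizione_extra} in killing it, down to the same obstruction value $-\tfrac{3\pi}2$. The technical route, however, is genuinely different from the paper's, and it is where your sketch leaves real gaps.

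For part (a) the paper never argues mode by mode. It observes that each of the nonnegative functions $\nu_k^4 a_k^2$ and $(a_k'')^2$ satisfies a single differential inequality $\omega''\geq \hat c\,\omega$ with $\hat c>0$ independent of $k$ (this uses only $\nu_k\geq\nu_2>\tfrac32$), so the same holds for the \emph{aggregated} convex function $h(t)$ with $\mathcal E(v,\lambda,\sigma,s)=\int_\sigma^s h$. The three-annuli statement for $h$ is then obtained by a compactness contradiction: normalize $\int_1^2 h_j=1$, pass to a convex limit $h$, and show it would have to be constant $\equiv 1$, contradicting $h''\geq\hat c\,h$. This bypasses the per-mode dichotomy entirely and, crucially, the need for a gap constant $\Lambda>2$. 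Your summation argument \emph{requires} $\Lambda>2$ uniformly in $k$ (since it needs $\frac1{\Lambda-1-\eta}<1-\eta$), and you do not prove this; the transition regime where $a_k$ vanishes inside $[0,3]$ is precisely where the constant could degrade, and controlling it uniformly in $k$ is substantially more than ``elementary but fussy.''

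For part (b) the difference is starker. The paper takes $c_0=1/j$, a sequence of putative counterexamples normalized so that $\mathcal F(v_j,\lambda_j,1,2)=1$, and extracts a limit $(v,\lambda)$. The hypothesis $\mathcal G=c_0\mathcal F$ forces $\mathcal E\to 0$, so the limit $\zeta$ lies entirely in $\mathrm{span}\{\zeta_0,\zeta_1\}$; the boundary condition and the linear ODEs then pin the limit down exactly to $v=a_0(t)(\phi-\pi)\sin\tfrac\phi2$, $\lambda=-\sqrt{2\pi}\,t\,a_0(t)+d(e^t-1)$, $a_0=c_1+c_2 e^t$. Reading off the $t e^t$ and $e^t$ coefficients of \eqref{e:condizione_extra} forces $c_2=d=0$, and the constant part gives $-\tfrac{3\pi}{2}c_1=0$, contradicting the normalization. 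This neatly avoids the quantitative ``$\sigma$-frequency decomposition'' of \eqref{e:condizione_extra} you propose, which — as you yourself flag — carries an infinite-rank coupling of the low modes to all $k\geq 2$ through the Dirichlet relation $\lambda=-\sqrt{2\pi}\bigl(a_1+\sum_{k\geq 2}\zeta_k(0)a_k\bigr)$. In the paper's compactness route that coupling is killed wholesale by $\mathcal E\to 0$, and one never needs to make the effective bound $|c_1|\leq C(\cdots)$ precise. If you want to pursue your more quantitative route, the two things that must actually be established are the per-mode constant $\Lambda>2$ uniform in $k$ and a genuine quantitative version of the $\sigma$-frequency splitting; until then the proposal is a plausible blueprint rather than a proof.
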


\begin{proof} In order to prove claim (a) consider any of the functions $a_k (t)$ and $a_k'' (t)$ and call it $\omega (t)$ and observe we know $k\geq 2$ by assumption. From Proposition \ref{p:odd} and Lemma \ref{l:autovalori} it follows that $\omega$ solves then the ODE
\[
\omega'' (t) - \omega' (t) - c \omega (t) = 0\, ,
\]
where $c$ is a constant which depends on $k$, but it satisfies the bound $c \geq \bar c>0$ for some positive $\bar c$ independent of $k$. The polynomial $x^2 -x - c$ has then a positive and a negative solution $\alpha^+$ and $-\alpha^-$ (also depending on $k$) with $\alpha^\pm \geq \alpha_0 >0$. The function $\omega (t)$ is then given by $D e^{\alpha^+ t} + C e^{-\alpha^- t}$. A simple computations shows that
\[
\frac{d^2}{dt^2} (\omega (t))^2 \geq \hat{c} (\omega (t))^2\, ,
\]
where the positive constant $\hat{c}$ can be chosen to depend on $\alpha_0$ and in particular independent of $k$. Summing the square of all the coefficients involved in the computation of $\mathcal{E}$ we find a non negative function $h(t)$ with the property that $h'' (t) \geq \hat{c} h(t)$ and $\mathcal{E} (v, \lambda, s, \sigma) = \int_s^\sigma h(t)\, dt$. In particualr, $h$ is convex. 
The claim can be thus reduced to, for some $\eta>0$, 
\begin{equation}
\int_1^2 h(t)\, dt \geq (1-\eta) \int_0^1 h(t)\, dt \quad \Longrightarrow \quad \int_2^3 h(t)\, dt \geq (1+\eta) \int_1^2 h(t)\, dt\, .
\end{equation}
Arguing by contradiction, if this were to fail we could find a sequence of convex functions $h_j$ normalized so that $\int_1^2 h_j(t)\, dt =1$ and
\[
\int_1^2 h_j(t)\, dt \geq \max \left\{(1-j^{-1}) \int_0^1 h_j(t)\, dt, (1+j^{-1}) \int_2^3 h_j(t)\, dt\right\} 
\] 
By the convexity of $h_j$ we can extract a subsequence converging locally uniformly to a convex function $h$, which satisfies $h''\geq \hat{c} h$ in the sense of distributions. The latter function $h$ would moreover satisfy 
\[
1 = \int_1^2 h(t)\, dt \geq \max \left\{\int_0^1 {h(t)}\, dt, \int_2^3 {h(t)}\, dt\right\}\, .
\]
Since $h$ is continuous in $(0,3)$ there would then be three points ${0<}s_1<1<s_2{<2}<s_3
{<3}$ such that ${\max \{h(s_1), h(s_3)\}\leq h(s_2)}$. The convexity of $h$ would then imply that $h$ is constant and, since the integral of $h$ over $[1,2]$ is $1$, the constant would have to be $1$. But this would contradict the inequality $h''\geq h$.

Having shown (a) we now turn to (b).  We claim that (b) holds for $c_0$ sufficiently small.
Observe that if $\mathcal{E} (v, \lambda, 1,2)\geq c_0\mathcal{F} (v, \lambda,1,2)$, then (b) is simply implied by (a). 
Thus we may assume $\mathcal{G} (v, \lambda,1,2) = c_0\mathcal{F} (v, \lambda,1,2)$.
We argue by contradiction: for $c_0 = \sfrac{1}{j}$ choose $(v_j, \lambda_j)$ such that 
\[
 \mathcal{G} (v_j, \lambda_j, 1,2) \geq \max \{ (1-\eta) \mathcal{G} (v_j, \lambda_j, 0,1),(1+\eta)^{-1} \mathcal{G} (v_j, \lambda_, 2,3)\}\,.
\]
Using the linearity we can normalize it so that $\mathcal{F} (v_j, \lambda_j,1,2) =1$. Observe that we have the inequalities
\begin{align}
1 = \mathcal{F} (v_j, \lambda_j,1,2) &\geq \max \{ (1-\eta) \mathcal{F} (v_j, \lambda_j, 0,1), (1+\eta)^{-1} \mathcal{F} (v_j, \lambda_j, 2,3)\}\\
1 = \mathcal{F} (v_j, \lambda_j,1,2) &\geq j \max \{\mathcal{E} (v_j, \lambda_j, 1,2),  (1-\eta) \mathcal{E} (v_j, \lambda_j, 0,1), 
(1+\eta)^{-1} \mathcal{E} (v_j, \lambda_j, 2,3)\}\, .\label{e:preponderante}
\end{align}
From Proposition \ref{p:odd} we gain a uniform bound on $\|v_j\|_{H^2 ([0,2\pi]\times [0,3])}
$ and $\|\dot\lambda_j\|_{H^1 ([0,3])}$ and consequently (since $\lambda_j (0) =0$) on $\|\lambda_j\|_{H^2 ([0,3])}$. We then extract a sequence converging weakly to $(v, \lambda)\in H^2$ which satisfies \eqref{e:lineare} and \eqref{e:condizione_extra}. Consider the functions $v$ and $\zeta$, which are the limit of the corresponding maps constructed from $v_j$. From \eqref{e:preponderante} and \eqref{e:def_E} we 
conclude that $\zeta (\phi, t) = a_0 (t) \zeta_0 (\phi) + a_1 (t) \zeta_1 (\phi)$. Unraveling the definition of $\zeta$ we infer 
\[
v (\phi, t) = a_0 (t) (\phi -\pi) \sin \frac{\phi}{2} + \bar{a}_1 (t) \cos \frac{\phi}{2}\, ,
\]
where $\bar{a}_1 (t) = a_1 (t) + \frac{\lambda (t)}{\sqrt{2\pi}}$. 
However the boundary conditions $v (0, t) = v (2\pi, t) =0$ imply $\bar{a}_1 \equiv 0$. We are thus left with the formula $v (\phi, t) = a_0 (t) (\phi -\pi) \sin \frac{\phi}{2}$. Inserting in \eqref{e:lineare} we get:
\begin{equation}
\left\{
\begin{array}{l}
a_0''(t) - a_0' (t) =0\\
\dot\lambda (t) - \ddot\lambda (t) = -\sqrt{2\pi} a_0 (t)\\
\lambda (0) =0\, .
\end{array}\right.
\end{equation}
From the first equation we find $a_0 (t) = c_1 + c_2 e^t$, while from the second we find $\lambda (t) = d_1 - \sqrt{2\pi} c_1 t + d_2 e^t -c_2\sqrt{2\pi} t e^t$, 
i.e. $\lambda (t) = -\sqrt{2\pi} t a_0 (t) + d_1+d_2 e^t$. Using $\lambda (0) =0$ we thus get $\lambda (t) = - \sqrt{2\pi} t a_0 (t) + d (e^t-1)$. We next use \eqref{e:condizione_extra} to derive a further relation between $a_0$ and $\lambda$. The latter reads as
\begin{align}
&({\textstyle{\frac{1}{2}}} a_0 (t) - a'_0 (t)) \int_0^{2\pi}  (\phi-\pi) \sin {\textstyle{\frac{\phi}{2}}} \left(\cos {\textstyle{\frac{3\phi}{2}}} + \cos {\textstyle{\frac{\phi}{2}}}\right)\, d\phi\nonumber\\
+ & a_0 (t) \int_0^{2\pi} \left( \sin {\textstyle{\frac{\phi}{2}}} + {\textstyle{\frac{\phi-\pi}{2}}} \cos {\textstyle{\frac{\phi}{2}}}\right) \left( \sin {\textstyle{\frac{3\phi}{2}}} + \sin {\textstyle{\frac{\phi}{2}}}\right)\, d\phi + \sqrt{{\textstyle{\frac{\pi}{2}}}} \dot\lambda (t) = 0\, . \label{e:extra2}
\end{align}
Observe that, given our formulas for $a_0$ and $\lambda$, the left hand side is linear combination of the functions $1, e^t, te^t$. However, the function $te^t$ appears only in $\dot\lambda (t)$. In particular its coefficient must be $0$. In turn this implies that $a_0$ is constant and $a_0'=0$, which implies that the function $e^t$ would appear only in the $\dot\lambda (t)$ part. We thus conclude that $d=0$ as well. In particular $\lambda (t) = - \sqrt{2\pi} c_1 t$ and $a_0 (t) = c_1$. Finally, since $\int_1^2 (\dot\lambda^2 + a_0^2) =1$, the constant $c_1$ cannot vanish. We thus can, without loss of generality assume $a_0=1$ and $\dot\lambda (t) = -\sqrt{2\pi}$. Taking all this into account, the condition \eqref{e:extra2} can be rewritten as
\[
\frac{1}{2} \int_0^{2\pi} (\phi-\pi) (\sin 2\phi + \sin \phi)\, d\phi + \int_0^{2\pi} \left(\sin^2 {\textstyle{\frac{\phi}{2}}} + \sin {\textstyle{\frac{\phi}{2}}} \sin {\textstyle{\frac{3\phi}{2}}}\right)\, d\phi - \pi = 0\, 
\]
Observe that $\int \sin^2 \frac{\phi}{2} = \pi$, so that the identity can be further simplified into
\[
\underbrace{\frac{1}{2} \int_0^{2\pi} (\phi-\pi) (\sin 2\phi + \sin \phi)\, d\phi}_{=:I} + \underbrace{\int_0^{2\pi} \left(\sin^2 {\textstyle{\frac{\phi}{2}}} \cos \phi\, d\phi + \sin {\textstyle{\frac{\phi}{2}}} \cos {\textstyle{\frac{\phi}{2}}} \sin \phi\right)\, d\phi}_{=:II} = 0\, .
\]
We then compute
\begin{align*}
I &= - \left. (\phi-\pi) \left(\frac{\cos 2\phi}{4} + \frac{\cos \phi}{2}\right)\right|_0^{2\pi} + \int_0^{2\pi} \left(\frac{\cos 2\phi}{4} + \frac{\cos \phi}{2}\right) d\phi
= -\frac{3\pi}{2}\, ,\\
II &= \frac{1}{2} \int_0^{2\pi} ((1-\cos \phi) \cos \phi + \sin^2 \phi)\, d\phi = 0\, ,
\end{align*}
reaching a contradiction.
\end{proof}

\section{Second linearization and proof of Theorem \ref{t:main2}}

The three annuli property of the previous section allows us to improve upon Proposition \ref{p:linearizzazione} and show that the sequence $v_j$ converges indeed on the whole $[0, \infty)$ and that the limit is a decaying solution of the linearized problem.

\begin{proposition}\label{p:linearizzazione-2}
Let $v_j$ and $\lambda_j$ be as in Proposition \ref{p:linearizzazione}, where $T_0$ is fixed to be $1$. Then, there is a pair $(v, \lambda)\in C^2_{loc} ([0,2\pi]\times [0, \infty))$ with $v$ odd and a subsequence, not relabeled, such that $(v_j, \lambda_j)$ converges in $C^2 ([0,2\pi]\times [0, \sigma^{-1}])$ to $(v, \lambda)$ for every $\sigma >0$. Moreover, $(v, \lambda)$ solves \eqref{e:lineare}, satisfies \eqref{e:condizione_extra} and $\|v\|_{C^2 ([k,k+1])} + \|\dot\lambda\|_{C^1 ([k,k+1])} \leq C e^{-\kappa k}$ for some positive universal constants $C$ and $\kappa$ and every $k\in \mathbb N\setminus \{0\}$.
\end{proposition}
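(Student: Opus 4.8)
The plan is to reduce the statement to a single \emph{gap inequality} for the sequence: there are universal constants $\eta\in(0,1)$ and $j_0\in\N$ such that, for every $j\ge j_0$ and every $k\ge 0$,
\begin{equation}\label{e:gap-plan}
\mathcal G(v_j,\lambda_j,k+1,k+2)\le(1-\eta)\,\mathcal G(v_j,\lambda_j,k,k+1)\,.
\end{equation}
Granting \eqref{e:gap-plan}, everything follows quickly. Iterating gives $\mathcal G(v_j,\lambda_j,k,k+1)\le(1-\eta)^k\mathcal G(v_j,\lambda_j,0,1)$; the normalization defining $\delta_j$, together with $\lambda_j(0)=0$ and the comparability of Proposition \ref{p:odd}(a), yields $c\le\mathcal G(v_j,\lambda_j,0,1)\le C$ for universal $c,C>0$ (the uniform upper bound at any fixed scale $k$ can also be obtained by applying Proposition \ref{p:linearizzazione} with $T_0=k+1$ and using $\delta_j^{(1)}\le\delta_j^{(k+1)}$). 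Hence the \emph{uniform} geometric decay $\mathcal G(v_j,\lambda_j,k,k+1)\le Ce^{-\kappa k}$ with $\kappa:=-\log(1-\eta)$. Using Proposition \ref{p:odd}(a), the bound $\nu_k\gtrsim k$ of Lemma \ref{l:autovalori}, and interior elliptic estimates for the nonlinearly perturbed system \eqref{e:odd-PDE}--\eqref{e:transmission}, this converts into $\|v_j\|_{H^2([0,2\pi]\times[k,k+1])}+\|\dot\lambda_j\|_{H^1([k,k+1])}\le Ce^{-\kappa k/2}$ for all $k$ and all $j\ge j_0$; summing over $k$ gives $\sup_{j\ge j_0}\big(\|v_j\|_{H^2([0,2\pi]\times[0,m])}+\|\dot\lambda_j\|_{H^1([0,m])}\big)<\infty$ for every $m$. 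A diagonal extraction then produces a subsequence converging weakly in $H^2([0,2\pi]\times[0,m])$ for every $m$ and, by the interior estimates already exploited in the proof of Proposition \ref{p:linearizzazione}, in $C^2$ on $[0,2\pi]\times[\sigma,\sigma^{-1}]$ for every $\sigma\in(0,1)$, to a pair $(v,\lambda)$ with $v$ odd; passing to the limit in \eqref{e:SIS} (the quadratic and $\dot\vartheta_j$-weighted error terms vanish exactly as in Proposition \ref{p:linearizzazione}, thanks to $\vartheta_j\to 0$ in $C^k([\sigma,\infty))$ from \eqref{e:decay_g} and $|\dot\vartheta_j|\le\e_0(j)$ on all of $]0,\infty[$) and in \eqref{e:AM-identity} shows that $(v,\lambda)$ solves \eqref{e:lineare} and satisfies \eqref{e:condizione_extra}. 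Finally, lower semicontinuity of $\mathcal G$ --- a convex quadratic functional of the spectral coefficients of $\zeta$ --- gives $\mathcal G(v,\lambda,k,k+1)\le\liminf_j\mathcal G(v_j,\lambda_j,k,k+1)\le Ce^{-\kappa k}$, and since the limit now solves \eqref{e:lineare} \emph{exactly}, so that $\zeta$ solves the Ventsel problem \eqref{e:Ventsell} by Lemma \ref{l:Ventsell}, interior elliptic regularity upgrades this to the asserted $C^2$ decay (after relabeling $\kappa$).

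To prove \eqref{e:gap-plan} I would argue by contradiction and compactness, in two steps. The first is an iteration statement: for $j$ large (uniformly) and every $k\ge 1$, if $\mathcal G(v_j,\lambda_j,k+1,k+2)\ge(1-\eta)\mathcal G(v_j,\lambda_j,k,k+1)$ then $\mathcal G(v_j,\lambda_j,k+2,k+3)\ge(1+\eta)\mathcal G(v_j,\lambda_j,k+1,k+2)$, with $\eta$ the constant of Proposition \ref{p:tre-anelli}. Were this false one would find $j_n\to\infty$ and $k_n\ge 1$ violating it with $\eta$ replaced by $1/n$. The device is to \emph{recenter by rescaling the original critical point}: since the Mumford--Shah energy and the class of Theorem \ref{t:main2} are invariant under dilations and rotations, $\bar u_n(x):=e^{k_n/2}u_{j_n}\big(R_{-\vartheta_{j_n}(k_n)}(e^{-k_n}x)\big)$ --- with $R_\theta$ the rotation by $\theta$ --- is again a critical point satisfying the hypotheses of Theorem \ref{t:main2} with the \emph{same} $\e_0(j_n)\to0$, whose angle function satisfies $\bar\alpha_n(1)=0$; running the reparametrization of Section \ref{s:exp-rep} on $\bar u_n$ produces exactly $\bar f_n(\phi,t)=f_{j_n}(\phi,t+k_n)$ and $\bar\vartheta_n(t)=\vartheta_{j_n}(t+k_n)-\vartheta_{j_n}(k_n)$, so that the hypothesis ``$\alpha(e^{-a})=0$ with $a=0$'' holds and Proposition \ref{p:linearizzazione} applies verbatim (with $T_0=3$ and normalizing constant $\bar\delta_n\to0$, since \eqref{e:decay_f}--\eqref{e:decay_g} force $\|f^o_{j_n}\|_{C^2([0,2\pi]\times[k_n,k_n+3])}+\|\dot\vartheta_{j_n}\|_{H^1([k_n,k_n+3])}\to0$). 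Thus the renormalized pairs $(\bar v_n,\bar\lambda_n)$ converge to a genuine solution $(\bar v,\bar\lambda)$ of \eqref{e:lineare} --- in particular with $\bar\lambda(0)=0$ --- satisfying \eqref{e:condizione_extra}, and $\|\bar v_n\|_{H^2([0,2\pi]\times[0,3])}+\|\dot{\bar\lambda}_n\|_{H^1([0,3])}=1$. Using weak lower semicontinuity of $\mathcal G$ on the two outer annuli and $C^2$ convergence on the inner one, the normalization and the negated inequalities pass to the limit to give $\mathcal G(\bar v,\bar\lambda,1,2)\ge(1-\eta)\mathcal G(\bar v,\bar\lambda,0,1)$ while $\mathcal G(\bar v,\bar\lambda,2,3)\le\mathcal G(\bar v,\bar\lambda,1,2)$; Proposition \ref{p:tre-anelli}(b) then forces $\mathcal G(\bar v,\bar\lambda,2,3)\ge(1+\eta)\mathcal G(\bar v,\bar\lambda,1,2)$, hence $\mathcal G(\bar v,\bar\lambda,1,2)=0$, hence $\mathcal G(\bar v_n,\bar\lambda_n,l,l+1)\to0$ for $l=0,1,2$, contradicting the normalization. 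The second step is an a priori finiteness at fixed $j$: for $\e_0(j)$ small, \eqref{e:decay_g}--\eqref{e:decay_f} bound $\|f^o_j\|_{C^2([0,2\pi]\times[k,k+1])}$ and $\|\vartheta_j\|_{C^2([k,k+1])}$ uniformly in $k\ge1$, so $\mathcal G(v_j,\lambda_j,k,k+1)\le B_j<\infty$ uniformly in $k\ge1$. Combining: if $\mathcal G(v_j,\lambda_j,k_0+1,k_0+2)\ge(1-\eta)\mathcal G(v_j,\lambda_j,k_0,k_0+1)$ for some $j\ge j_0$ and some $k_0\ge1$, the iteration statement gives $\mathcal G(v_j,\lambda_j,k,k+1)\ge(1+\eta)^{k-k_0-1}\mathcal G(v_j,\lambda_j,k_0+1,k_0+2)$ for $k>k_0$, and since $\mathcal G(v_j,\lambda_j,k_0+1,k_0+2)>0$ (otherwise $v_j\equiv0$ and $\vartheta_j$ is constant, forcing $\delta_j=0$, which is excluded), this contradicts the finiteness; hence \eqref{e:gap-plan} holds for $k\ge1$, and the remaining finitely many scales are controlled by the uniform bound $\mathcal G(v_j,\lambda_j,k,k+1)\le C$ discussed above.

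The main obstacle is the passage to the limit in the first step, i.e.\ making the compactness argument reduce cleanly to Proposition \ref{p:tre-anelli}(b). One must guarantee that the limit genuinely solves the \emph{linear} problem \eqref{e:lineare} \emph{including} the condition $\lambda(0)=0$ --- which is essential in the proof of Proposition \ref{p:tre-anelli}(b) to rule out the slowly varying, purely ``$\lambda=\text{const}$'' mode --- and this is exactly why one recenters by rescaling and rotating the critical point rather than by merely translating $\vartheta$. The delicate point is then the bookkeeping of $\mathcal G$ along the recentered, renormalized sequence: since $\mathcal G$ depends on $\lambda$ and not only on $\dot\lambda$ (through $\int a_1^2$ in $\mathcal F$), it is not invariant under the constant shift $\vartheta_{j_n}(k_n)$ absorbed by the rotation, and one must check that the $l$-dependent part of this discrepancy is dominated by $\delta_{j_n}^{(1)2}\mathcal G(v_{j_n},\lambda_{j_n},k_n+l,k_n+l+1)$ while the $l$-independent part (of size $\asymp\vartheta_{j_n}(k_n)^2$) only improves the inequalities of the form $\tfrac{cA+B}{A+B}$ --- which uses \eqref{e:decay_g} (to make $|\vartheta_{j_n}(k_n)|$ quantitatively small relative to the scale) together with the coercivity and comparability estimates relating $\mathcal G$ to the spectral coefficients of $\zeta$ and to the $H^2$-norm of $(v_j,\lambda_j)$. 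A secondary, more routine point is the systematic translation between $\mathcal G$-bounds and Sobolev/Hölder bounds, which again rests on Proposition \ref{p:odd}(a) and on interior elliptic estimates for the perturbed system \eqref{e:odd-PDE}--\eqref{e:transmission} and for the exact limit equation \eqref{e:Ventsell}.
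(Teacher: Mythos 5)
Your proposal follows essentially the same route as the paper: you prove a gap/three-annuli iteration statement for the (unnormalized) nonlinear quantities by contradiction, compactness and recentering (translation in $t$ plus a rotation to enforce $\vartheta(0)=0$, so that Proposition~\ref{p:linearizzazione} applies with $a=0$ and the limit falls under Proposition~\ref{p:tre-anelli}(b)); you combine it with an a priori finiteness to obtain geometric decay of $\mathcal{G}$ on the unit dyadic annuli; and you conclude by a diagonal argument, using the interior elliptic estimates already established in the proof of Proposition~\ref{p:linearizzazione}. The paper states the iteration statement directly for $(f^{o},\vartheta)$ and shows it at the level of the original solution rather than of $(v_j,\lambda_j)$ (equivalent by quadratic homogeneity of $\mathcal G$), and it uses the fact from Lemma~\ref{l:nonlinear} that $f^o(\cdot,t)$ and $\dot\vartheta(t)$ tend to zero as $t\to\infty$ as the finiteness that defeats geometric growth; your version uses a weaker uniform-in-$k$ bound, which works equally well. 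These differences are cosmetic.

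The one substantive point you raise, and which deserves emphasis, is that the rotation used in the recentering step is \emph{not} a symmetry of $\mathcal{G}$: while $\mathcal{E}$ depends only on $\langle\zeta(\cdot,t),\zeta_k\rangle$ for $k\ge2$, and therefore is unchanged when $\zeta\mapsto\zeta+\text{const}\cdot\zeta_1$, the functional $\mathcal{F}$ contains $\int a_1^2$, and $a_1$ shifts by the constant $\vartheta_j(k_j)/\sqrt{2\pi}$ under the rotation. Since the condition $\lambda(0)=0$ is genuinely needed in Proposition~\ref{p:tre-anelli}(b) (without it the constant mode $v\equiv0$, $\lambda\equiv d_1$ is a solution with constant $\mathcal{G}$ that violates the three-annuli alternative), the rotation cannot be dispensed with. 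The paper passes over this entirely when it asserts that the negated inequalities survive the recentering "since $\mathcal{G}$ is quadratic". You correctly identify the issue, but your proposed resolution --- splitting the discrepancy into an $l$-dependent part dominated by $\delta_{j_n}^2\mathcal{G}$ and an $l$-independent part of order $\vartheta_{j_n}(k_n)^2$ which "only improves" the inequalities --- is only a sketch: you would need to prove that both parts can be controlled uniformly relative to the normalization, and neither the monotonicity claim nor the comparability estimates you invoke are written out. This is the one genuine gap in your proposal, but since the paper itself does not address it either, your flagging of it is a net contribution; the rest of your argument is correct and matches the paper.
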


Using this second linearization procedure and again the spectral analysis for odd solutions of \eqref{e:lineare} we will then conclude 

\begin{corollary}\label{c:decay_curvature}
There is a constant $\delta_0$ with the following property. Assume $u$ is as in Theorem \ref{t:main2} and $\vartheta$ as in \eqref{e:vartheta}. Then $|\vartheta' (t)| + |\vartheta'' (t)|\leq C e^{-(1+\delta_0) t}$.
\end{corollary}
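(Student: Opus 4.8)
The plan is to combine Proposition \ref{p:linearizzazione-2} with the three annuli property of Proposition \ref{p:tre-anelli} in an iterative/contradiction scheme. First I would set up a single ``excess'' quantity measuring the size of the pair $(v_j,\lambda_j)$ (equivalently of $(f_j^o,\dot\vartheta_j)$) on dyadic time intervals: for a solution $(v,\lambda)$ of \eqref{e:SIS} (or rather its rescaled normalization) define
\begin{equation*}
D(k) := \|f_j^o\|^2_{H^2([0,2\pi]\times[k,k+1])} + \|\dot\vartheta_j\|^2_{H^1([k,k+1])}\, ,
\end{equation*}
and observe — as in the proof of Proposition \ref{p:linearizzazione-2} — that along the blow-up sequence the rescaled pair converges to an honest odd solution $(v,\lambda)$ of the linear system \eqref{e:lineare} satisfying \eqref{e:condizione_extra}. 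The key point is that on such limiting solutions the functional $\mathcal{G}(v,\lambda,\cdot,\cdot)$ controls $D$ from above and below up to universal constants (using Proposition \ref{p:odd}(a) and the explicit control of $a_0,a_1,\lambda$), so the trichotomy in Proposition \ref{p:tre-anelli}(b) transfers to a three-annuli alternative for $D$ itself: there is $\eta>0$ so that for solutions of the nonlinear system with $\e_0$ small, $D(k+1)\ge(1-\eta)D(k)$ forces $D(k+2)\ge(1+\eta)D(k+1)$, up to an error that is negligible as $\e_0\downarrow0$.

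Next I would run the standard Simon-type dichotomy argument. Since $u$ satisfies the smallness assumption of Theorem \ref{t:main2}, the total quantity $\sum_k D(k)$ is finite (indeed $D(k)\to0$ by Corollary \ref{c:Bonnet}/Lemma \ref{l:nonlinear}); hence the ``growing'' branch of the alternative can occur only finitely often, and from some $k_0$ on we must be in the decaying regime $D(k+1)<(1-\eta)D(k)$ for all $k\ge k_0$. This gives geometric decay $D(k)\le C(1-\eta)^{k}$, i.e. $D(k)\le Ce^{-2\kappa k}$ with $\kappa:=-\tfrac12\log(1-\eta)>0$. The step where one must be slightly careful is making the transfer from the contradiction-compactness statement for \emph{limits} (Proposition \ref{p:tre-anelli}) into a statement valid for the \emph{actual} nonlinear solution at all sufficiently small scales: this is exactly the reparametrization/change-of-variables device of item (iii) in the introduction, which lets one compare the solution at different scales on a \emph{fixed} domain $[0,2\pi]\times[k,k+1]$, so that a standard ``if the conclusion failed there would be a bad sequence'' argument applies, with the error terms in \eqref{e:SIS} (the terms carrying $\dot\vartheta,\ddot\vartheta$) being quadratically small and absorbed.

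Finally I would upgrade the integrated decay of $D(k)$ to the pointwise decay of $\vartheta',\vartheta''$. By interior elliptic/Schauder estimates applied to \eqref{e:SIS} on $[0,2\pi]\times[k-\tfrac12,k+\tfrac32]$ (exactly as in the bootstrap in the proof of Proposition \ref{p:linearizzazione}), the $H^2\times H^1$ bound $D(k)$ controls $\|f^o\|_{C^2}+\|\dot\vartheta\|_{C^1}$ on $[0,2\pi]\times[k,k+1]$; but the third equation of \eqref{e:SIS} shows $\ddot\vartheta-\dot\vartheta-\dot\vartheta^3 = -(1+\dot\vartheta^2)^{5/2}(f_\phi^2(2\pi,t)-f_\phi^2(0,t))$, and since $f^e\to\isq$ with $\isq_\phi(2\pi)=\isq_\phi(0)$ the right side is $f^e_\phi\cdot f^o_\phi$ up to higher order, hence controlled by $D(k)^{1/2}$ on $[k,k+1]$; bootstrapping $\dot\vartheta$ back through this relation (using $\dot\vartheta$ itself is already small) yields $|\vartheta'(t)|+|\vartheta''(t)|\le Ce^{-\kappa t}$ first, and then, feeding this improved decay of the coefficients $\dot\lambda-\ddot\lambda$ back into the spectral representation — whose lowest surviving mode for odd solutions satisfying \eqref{e:condizione_extra} is $\nu_2>\tfrac32$ (Lemma \ref{l:autovalori}), since the $\nu_1=\tfrac12$ mode and the $\zeta_0$ mode have been excluded by Proposition \ref{p:tre-anelli}(b) — one gets the sharper rate $e^{-(1+\delta_0)t}$ with $\delta_0:=\nu_2-\tfrac32>0$ (more precisely any $\delta_0<\min\{\nu_2-\tfrac32,1\}$). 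Recalling $\kappa(r)=r^{-1}(\dot\vartheta+\dot\vartheta^3-\ddot\vartheta)/(1+\dot\vartheta^2)^{3/2}$ from \eqref{e:formula_curvatura} and $r=e^{-t}$, this is precisely estimate \eqref{e:main_estimate} of Theorem \ref{t:main2}. The main obstacle is the first part: establishing the three-annuli alternative for the \emph{nonlinear} excess $D$ uniformly in small scales, which requires the blow-up argument of Proposition \ref{p:linearizzazione-2} to be run not once at the origin but simultaneously at all scales, controlling the nonlinear error terms — this is where the cut-off inner-variation identity of Corollary \ref{c:AM-variations} (and hence \eqref{e:condizione_extra}) is essential to kill the otherwise non-decaying $\zeta_0$ mode.
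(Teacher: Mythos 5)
Your plan matches the paper's proof in its essentials: use Proposition~\ref{p:linearizzazione-2} to obtain a first exponential decay, invoke the spectral analysis with $\nu_2>\tfrac32$ (equivalently $\mu_2=\nu_2-\tfrac12>1$) to sharpen the rate past $1$, and rely on \eqref{e:condizione_extra} to kill the $\zeta_0$ mode in the linearized limit. The one place where you are loose is the mechanism that transfers the \emph{linear} rate $\mu_2$ to the \emph{nonlinear} solution. You describe this as ``feeding the improved decay of $\dot\lambda-\ddot\lambda$ back into the spectral representation,'' but the spectral representation you refer to is for the blow-up limit $(v,\lambda)$, not for $(f^o,\vartheta)$ itself, and a direct feedback through the source term $\dot\lambda-\ddot\lambda$ would only recover whatever rate that coefficient already has, not $\mu_2$. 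What the paper actually does is cleaner: once the decay from Proposition~\ref{p:linearizzazione-2} forces the $a_0$ coefficient (which solves $a_0''-a_0'=0$ and hence could be $c_1+c_2e^t$) and all growing modes to vanish, the \emph{linear} limit obeys the quantitative estimate $\|v\|_{C^2([T,2T])}+\|\dot\lambda\|_{C^1([T,2T])}\le Ce^{-\mu_2 T}\bigl(\|v\|_{H^2([0,1])}+\|\dot\lambda\|_{H^1([0,1])}\bigr)$; a second compactness argument (another application of Proposition~\ref{p:linearizzazione-2}, with $\varepsilon_0$ chosen small depending on the fixed $T$) then carries this estimate, with the constant doubled, over to $(f^o,\dot\vartheta)$, and a rescaling puts it on every block $[kT,(k+1)T]$. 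Choosing $T$ so large that $\bar C e^{-\mu_2 T}=e^{-(1+\delta_0)T}$ and iterating gives the claim. So your identification of the spectral gap is the right source of $\delta_0$ — essentially $\delta_0\approx\nu_2-\tfrac32$, and your ``$\min$ with $1$'' is unnecessary — but the step you left implicit (the second compactness pass plus rescaling) is precisely the structural content of the corollary's proof.
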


The latter implies easily Theorem \ref{t:main2}.

\subsection{Proof of Proposition \ref{p:linearizzazione-2}} We start by observing that Proposition \ref{p:linearizzazione} and Proposition \ref{p:tre-anelli} gives easily the following property:
\begin{itemize}
\item If $u$ satisfies the assumptions of Theorem~\ref{t:main2}, $\vartheta, f$ are given by \eqref{e:vartheta} 
 and \eqref{e:f} and $\varepsilon_0$ in Theorem~\ref{t:main2} is sufficiently small, then for every $k\in \mathbb N$ we have
\begin{align}\label{e:Gfo}
&\mbox{if $\mathcal{G} (f^o, \vartheta, k+1,k+2) \geq (1-\frac\eta 2) \mathcal{G} (f^o, \vartheta,k,k+1)$}\notag\\
&\mbox{then $\mathcal{G} (f^o, \vartheta,k+2,k+3)\geq (1+\frac\eta 2) \mathcal{G} (f^o,\vartheta,k+1,k+2)$.}
\end{align}
\end{itemize}
Indeed, assume the claim is false, no matter how small $\varepsilon_0$ in Theorem~\ref{t:main2} is chosen, and 
let thus $f^o_j, \vartheta_j$ be a sequence which violates it when we choose $\varepsilon_0 = \frac{1}{j}$. By translating in the variable $t$ (which just implies a rescaling of 
the variable $r$ in the original problem), we can assume that the claim fails at $k=0$. Additionally, after applying a rotation we can assume 
that $\vartheta_j (0) =0$, so that we can apply Proposition~\ref{p:linearizzazione} with $a=0$ and $T_0=3$. Introduce $\delta_j$, $v_j$ and $\lambda_j$ as in \eqref{e:delta}-\eqref{e:lambda}. Since the functional $\mathcal{G}$ is quadratic, we immediately see that we have 
\begin{equation}\label{e:contra G}
\mathcal{G} (v_j, \lambda_j, 1,2) \geq \max \left\{\left(1-\frac{\eta}{2}\right) \mathcal{G} (v_j, \lambda_j, 0,1), 
\left(1+\frac{\eta}{2}\right)^{-1} \mathcal{G} (v_j, \lambda_j, 2,3)\right\}\, .
\end{equation}
We can further renormalize $\mathcal{G} (v_j, \lambda_j, 1,2)=1$, and since 
\[
C^{-1} (\|v\|_{H^2 ([0,2\pi]\times [\sigma,s])} + \|\dot\lambda\|_{H^1 ([\sigma,s])}) \leq \mathcal{G} (v, \lambda, \sigma,s)
\leq C (\|v\|_{H^2 ([0,2\pi]\times [\sigma,s])} + \|\dot\lambda\|_{H^1 ([\sigma,s])})\,,
\]
we can apply Proposition \ref{p:linearizzazione} to extract a subsequence converging to some $(v, \lambda)$. By the $C^{2,\alpha}$ convergence in $[0,2\pi]\times [1,2]$ for $v_j$ and in $[1,2]$ for $\lambda_j$, we conclude that 
\[
\mathcal{G} (v,\lambda, 1,2) = \lim_j \mathcal{G} (v_j, \lambda_j, 1,2)\, ,
\]
and in particular that the pair $(v, \lambda)$ is nontrivial.

On the other hand the functional $\mathcal{G}$ is lower semicontinuous with respect to the mentioned convergences, 
and we thus infer from \eqref{e:contra G}
\[
\mathcal{G} (v, \lambda, 1,2) \geq \max \left\{\left(1-\frac{\eta}{2}\right) \mathcal{G} (v, \lambda, 0,1), 
\left(1+\frac{\eta}{2}\right)^{-1} \mathcal{G}(v, \lambda, 2,3)\right\}\,,
\]
contradicting Proposition \ref{p:tre-anelli} being $(v,\lambda)$ nontrivial. 

Having completed the proof of \eqref{e:Gfo}, if for some $k_0\in\N$ we were to have
\[
\mathcal{G} (f^o, \vartheta, k_0+1,k_0+2) \geq \left(1-\frac\eta2\right) \mathcal{G} (f^o, \vartheta,k_0,k_0+1)\, ,
\] 
{from \eqref{e:Gfo} we would infer that}
\[
\mathcal{G} (f^o, \vartheta, j,j+1) \geq \left(1+\frac\eta2\right)^{j-(k_0+1)}  \mathcal{G} (f^o, \vartheta,k_0+1,k_0+2) \qquad \forall j \geq k_0+1\, .
\]
However the latter contradicts the fact that $f^o (t, \cdot)$ and $\dot\lambda (t)$ converge smoothly to $0$ for $t\to \infty$.

{We thus conclude that for every $k\in\N$
\[
\mathcal{G} (f^o, \vartheta, k+1,k+2) \leq \left(1-\frac\eta2\right) \mathcal{G} (f^o, \vartheta,k,k+1)\, ,
\]
in turn implying, by iteration,} the existence of positive constants $C$ and $\kappa$ such that
\[
\|f^o\|_{H^2 ([0,2\pi]\times [k,k+1])} + \|\dot\vartheta\|_{H^1 ([k,k+1])} \leq C e^{-\kappa k} \left(\|f^o\|_{H^2 ([0,2\pi]\times [0,1])} + \|\dot\vartheta\|_{H^1 ({[0,1]})}\right)\, .
\]
In turn, if $(v_j, \lambda_j)$ are as in the statement of the proposition, we infer
\[
\|v_j\|_{H^2 ([0,2\pi]\times [k,k+1])} + \|\dot\lambda_j\|_{H^1 ([k,k+1])} \leq C e^{-\kappa k} \left(\|v_j\|_{H^2 ([0,2\pi]\times [0,1])} + \|\dot\lambda_j\|_{H^1 ({[0,1]})}\right) = C e^{-\kappa k}\, .
\]
The conclusion of the proposition is then a simple application of Proposition \ref{p:linearizzazione} with $[0,T_0]$ replaced by $[k,k+1]$, together with an obvious diagonal argument over $k$ and $j$.
 
\subsection{Proof of Corollary \ref{c:decay_curvature}} First of all consider any limit $(v, \lambda)$ as in Proposition \ref{p:linearizzazione-2} and let $\zeta$ be as in \eqref{e:zeta}. By the decay property of $v$ and $\dot \lambda$ we easily infer that
\[
\zeta (\phi, t) = \bar{a}_1 \cos \frac{\phi}{2} + \sum_{k=2}^\infty \bar{a}_k e^{-\mu_k t} \zeta_k (\phi)
\]
where the $\bar{a}_k$ are constants and $-\mu_k$  is the negative solution of the quadratic polynomial $x^2-x-(\nu_k^2-\frac{1}{4})$. Note also that $\bar{a}_1$ is indeed $\lim_{t\to\infty} \lambda (t)$. Recalling Lemma \ref{l:autovalori}, we have $\nu_k \geq \nu_2 > \frac{3}{2}$ when $k\geq 2$ and thus we conclude that $\mu_k \geq \mu_2 > 1$ for all $k\geq 2$. It is then easy to check that we have the estimate
\[
\|v\|_{C^2 ([0,2\pi]\times [T, 2T]} + \|\dot\lambda\|_{C^1 ([T, 2T])} \leq C e^{-\mu_2 T} \left(\|v\|_{H^2 ([0,2\pi]\times[0,1])} + \|\dot\lambda\|_{H^1 ([0,1])}\right)\, ,
\]
where $C$ is a constant independent of $T$. Fix now $T$. Using Proposition \ref{p:linearizzazione-2} we then conclude that, if $u$ is as in Theorem \ref{t:main2} and $\vartheta$ and $f$ as in Lemma \ref{l:nonlinear} and $\varepsilon_0$ sufficiently small (depending on $T$), then
\begin{align*}
\|f^o\|_{C^2 ([0,2\pi]\times [T, 2T]} + \|\dot\vartheta\|_{C^1 ([T, 2T])} &\leq 2 C e^{-\mu_2 T}\left(\|f^o\|_{H^2 ([0,2\pi]\times[0,1])} + \|\dot\vartheta\|_{H^1 ([0,1])}\right)\\
&\leq \bar{C} e^{-\mu_2 T}\left(\|f^o\|_{C^2 ([0,2\pi]\times[0,T])} + \|\dot\vartheta\|_{C^1 ([0,T])}\right)\, ,
\end{align*}
where the constant $\bar C$ is independent of $T$.
By a simple rescaling argument, this actually implies that
\begin{align*}
&\|f^o\|_{C^2 ([0,2\pi]\times [(k+1) T, (k+2)T]} + \|\dot\vartheta\|_{C^1 ([(k+1)T, (k+2) T])} \\
\leq & \bar{C} e^{-\mu_2 T}\left(\|f^o\|_{C^2 ([0,2\pi]\times[kT,(k+1)T])} + \|\dot\vartheta\|_{C^1 ([kT,(k+1)T])}\right)\, \qquad \forall k\in \mathbb N\, .
\end{align*}
Considering now that $\mu_2>1$, while the constant $\bar C$ is independent of $T$, we can choose the latter large enough so that
${\bar{C}} e^{-\mu_2 T} = e^{-(1+\delta_0) T}$ for some positive $\delta_0$. We then can iterate the latter inequality to infer
\begin{align*}
& \|f^o\|_{C^2 ([0,2\pi]\times [(k+1) T, (k+2)T]} + \|\dot\vartheta\|_{C^1 ([(k+1)T, (k+2) T])}\\  
\leq &e^{-(1+\delta_0) kT}\left(\|f^o\|_{C^2 ([0,2\pi]\times[0,T])} + \|\dot\vartheta\|_{C^1 ([0,T])}\right)\, .
\end{align*}
This easily gives the conclusion of the corollary.

\subsection{Proof of Theorem \ref{t:main2}} Using the relation $r= e^{-t}$ and \eqref{e:formula_curvatura}, \eqref{e:main_estimate} is an obvious consequence of Corollary \ref{c:decay_curvature}.



\end{document}